\author{Dilip Raghavan}
\thanks{First author was partially supported by the Singapore Ministry of Education's research grant number MOE2017-T2-2-125.}
\address{Department of Mathematics\\
National University of Singapore\\
Singapore 119076.}
\email{raghavan@math.nus.edu.sg}
\urladdr{http://www.math.nus.edu.sg/$\sim$raghavan}
\author{Stevo Todorcevic}
\thanks{Second author is partially supported by grants from NSERC (455916) and CNRS (IMJ-PRG UMR7586).}
\address{Department of Mathematics, University of Toronto, Toronto, Canada, M5S 2E4.}
\address{Institut de Math\'{e}matique de Jussieu, UMR 7586, Case 247, 4 place Jussieu, 75252 Paris Cedex, France.}
\email{stevo@math.toronto.edu, todorcevic@math.jussieu.fr}
\date{\today}
\subjclass[2010]{03E02, 03E17, 03E55, 03E50}
\keywords{Ramsey degree, rationals, separated space, point-countable base, large cardinals, stationary tower}
\title{Proof of a Conjecture of Galvin}
\def\polhk#1{\setbox0=\hbox{#1}{\ooalign{\hidewidth
    \lower1.5ex\hbox{`}\hidewidth\crcr\unhbox0}}}
\newtheorem{Theorem}{Theorem}
\newtheorem{Lemma}[Theorem]{Lemma}
\newtheorem{Cor}[Theorem]{Corollary}
\newtheorem*{gp}{General Problem}
\theoremstyle{definition}
\newtheorem{Def}[Theorem]{Definition}
\theoremstyle{remark}
\newcommand{\restrict}{\mathord{\upharpoonright}}
\renewcommand{\[}{\left[}
\renewcommand{\]}{\right]}
\newcommand{\Q}{\mathbb{Q}}
\newcommand{\R}{\mathbb{R}}
\newcommand{\lc}{\left|}
\newcommand{\rc}{\right|}
\newcommand{\wor}{{<}_{\mathtt{wo}}}
\DeclareMathOperator{\dom}{dom}
\DeclareMathOperator{\rk}{rank}
\newcommand{\Pset}{\mathcal{P}}
\newcommand{\nn}{\mathbb{N}}
\newcommand{\zz}{\mathbb{Z}}
\newcommand{\BB}{\mathcal{B}}
\newcommand{\CC}{\mathcal{C}}
\newcommand{\GG}{{\mathcal{G}}}
\newcommand{\I}{{\mathcal{I}}}
\newcommand{\F}{{\mathcal{F}}}
\newcommand{\TT}{{\mathcal{T}}}
\newcommand{\KK}{{\mathcal{K}}}
\newcommand{\pr}[2]{\langle #1, #2 \rangle}
\newcommand{\lex}[2]{#1 \: {<}_{\mathord{\mathrm{lex}}} \: #2}
\newcommand{\seq}[4]{\langle {#1}_{#2}: #2 #3 #4 \rangle}
\begin{document}
\begin{abstract}
We prove that if the set of unordered pairs of real numbers is colored by finitely many colors, there is a set of reals homeomorphic to the rationals whose pairs have at most two colors.
Our proof uses large cardinals and it verifies a conjecture of Galvin from the 1970s.
We extend this result to an essentially optimal class of topological spaces in place of the reals.
\end{abstract}
\maketitle
\section{Introduction} \label{sec:intro}
In this paper we present a result that sheds light on a general problem about the behavior of an arbitrary relational structure of the form $(\mathbb{R}, S_1,...,S_n)$ on `large' subsets of $\mathbb{R}.$
A general result of Ehrenfeucht--Mostowski~\cite{EM}, anticipated already in the seminal paper of Ramsey \cite{ramsey}, shows that such problems can be reduced to problems about finite colorings of the symmetric cubes ${[\mathbb{R}]}^{k}$ (the set of all $k$-element sets of real numbers), where the integer $k$ is closely related to the arity of the (finite list of) relations of the given structure on $\mathbb{R}.$
In other words, in our general problem we could restrict ourselves to relational structures of the form $(\mathbb{R}, E)$, where $E$ is a single equivalence relation with finitely many equivalence classes on an appropriate symmetric cube ${[\mathbb{R}]}^{k}.$
Answering a question of Knaster, in 1933, Sierpi{\' n}ski~\cite{Sierp} has shown that a well-ordering $\wor$ of $\mathbb{R}$ can be used in defining a particular equivalence relation ${E}_{k}^{S}$ on the finite symmetric cube ${[\mathbb{R}]}^{k}$ with $k!(k-1)!$ classes by comparing the behaviors of the well-ordering $\wor$ and the usual ordering on a given $k$-element set $s$ as well as recording the ordering of distances between consecutive elements of $s$ when enumerated increasingly according to the usual ordering of $\mathbb{R}.$
What Sierpi{\' n}ski's proof shows is that the number $k!(k-1)!$ of equivalence classes of ${E}_{k}^{S}$ cannot be reduced by restricting it to any uncountable, or more generally, nonempty and dense in itself subset of $\mathbb{R}.$
This feature of Sierpi{\' n}ski's proof was first put forward by Galvin in a letter to Laver (\cite{galvinletter}), and it was reiterated few years later when Baumgartner proved that in this problem $\mathbb{R}$ cannot be replaced by any countable topological space. Baumgartner~\cite{baumtop} explicitly states the $2$-dimensional version of Galvin's conjecture solved here, with an opinion that this is probably the most interesting open problem in this area.
More precisely, we show using large cardinals that if $X$ is an arbitrary uncountable set of reals and $E$ is an equivalence relation on ${[X]}^{2}$, then there is $Y \subseteq X$ homeomorphic to $\mathbb{Q}$ such that $E \restrict {[Y]}^{2}$ is coarser than ${E}_{2}^{S} \restrict {[Y]}^{2}.$
In fact we shall isolate what appears to be the optimal general topological condition on the space $X$ that guarantees this conclusion with ${E}_{2}^{S}$ replaced by an appropriate equivalence relation on ${[X]}^{2}$ that has exactly $2$ classes when restricted to any topological copy of $\mathbb{Q}$ inside $X.$

We finish this introduction with comments on the methods behind the proofs of these results.
Given a space $X$ satisfying certain conditions and a finite coloring  $c: {[X]}^{2} \rightarrow l$, we use large cardinals to construct a topological copy $Y \subseteq X$ of $\Q$ such that ${[Y]}^{2}$ uses no more than 2 colors.
In hindsight the conditions on $X$ are made in order to allow us a construction using large cardinals of another space $Z$ together with a continuous map $f: Z \rightarrow X$ such that $Z$ is a Baire space, that $f$ is not constant on any nonempty open subset of $X$, and that the induced coloring ${c}_{f}: {[Z]}^{2} \rightarrow l+1$  (given by ${c}_{f}(x,y)=c(f(x), f(y))$ if $f(x)\neq f(y)$ and ${c}_{f}(x,y)=l$ if $f(x)=f(y)$) is in some sense Baire measurable.
Thus the problem is transferred to $Z$ where it becomes possible to use Banach-Mazur games to construct a copy of $\Q$ which uses only two colors of ${c}_{f}$ and on which $f$ is one-to-one.
The conditions on $X$ which allow us (using large cardinals) such transfer to a Baire space $Z$ and a continuous nowhere constant map $f$ had been already used in the paper~\cite{genericcontinuity}, which in turn was motivated by a problem of Haydon~\cite{haydon90} from the theory of differentiability in the context of general Banach spaces.
It should also be noted that large cardinals are introduced into the construction of $Z$ and $f: Z \rightarrow X$ through the ideas behind the stationary tower forcing of Woodin \cite{woodin88}, which in turn was inspired by the groundbreaking work of Foreman, Magidor and Shelah~\cite{fms1}.
We believe that applying large cardinals to structural Ramsey theory is a new idea that will give us more results of this kind.
In fact, we are now investigating if this idea will also lead us to the proof of the higher-dimensional version of Galvin's Conjecture stating that for every integer $k\geq 2,$ an arbitrary coloring of 
${[\R]}^{k}$ can be reduced to the Sierpi{\' n}ski coloring on a topological copy of $\Q.$
Finally, we mention that the precise forms of our results are explained in Sections \ref{intro} and \ref{intro2} where we comment on their general interest and how they are related to other areas of mathematics.
\section{Ramsey degree calculus}\label{intro}
In this section we state the general form of our result for sets of reals, putting it into the context of other results in this area.
One of the goals of Ramsey theory is to find the smallest number of colors that must necessarily occur among the pairs in any sufficiently rich substructure of a more complicated structure whenever all of the pairs from the more complicated structure have been colored with finitely many colors.
More generally, suppose that $\CC$ is come class of structures and that $A$ is a structure that embeds into every member of $\CC$.
For each natural number $k \geq 1$, we would like to know the smallest number ${t}_{k}$ such that for every natural number $l \geq 1$, for every structure $B \in \CC$, and for every coloring that assigns one of $l$ colors to each $k$-element subset of $B$, there exists a substructure $X \subseteq B$ which is isomorphic to $A$ and has the property that at most ${t}_{k}$ colors occur among the $k$-element subsets of $X$.  
This natural number ${t}_{k}$, if it exists, is called the \emph{$k$-dimensional Ramsey degree of the structure $A$ within the class $\CC$}.
Determining this number produces a finite basis for the class of all colorings that assign one of finitely many colors to each $k$-element subset of some structure $B \in \CC$, in the sense that it shows that an arbitrary such coloring is equivalent to one of finitely many canonical colorings, when all colorings are restricted to a substructure of $B$ that is isomorphic to $A$.  
This finite list of canonical colorings can be determined once ${t}_{k}$ is computed.

The problem of computing the Ramsey degrees of $A$ in a class of structures $\CC$ can be formulated as an expansion problem.
Let us say that \emph{$R$ is a finitary relation on $A$} to mean that there is an integer $k \geq 1$ so that $R$ consists of sequences of length $k$ in $A$.
Solving the expansion problem for $A$ within $\CC$ requires identifying a list of finitely many finitary relations ${R}_{1}, \dotsc, {R}_{n}$ on $A$ that are atomic for the structures in $\CC$ in the following sense: for each structure $B \in \CC$ and an arbitrary finitary relation $S$ on $B$, there must exist a substructure $X \subseteq B$ and an isomorphism $\varphi: A \rightarrow X$ such that the restriction of $S$ to $X$ is definable without quantifiers from the images of ${R}_{1}, \dotsc, {R}_{n}$ under $\varphi$.
Determining the Ramsey degrees of $A$ within $\CC$ solves the expansion problem for $A$ within $\CC$.
Frequently, the atomic finitary relations that solve the expansion problem turn out to be purely order-theoretic in nature.    
An example of such a computation of canonical forms for arbitrary finitary relations on $\nn$ via Ramsey's original theorem can be found in Chapter 1 of \cite{introramsey} (Theorem 1.7 of \cite{introramsey}).
The binary relations $<, =$, and $>$ are the only atomic relations needed to define an arbitrary finitary relation on $\nn$ without the help of quantifiers, once everything has been restricted to a suitable isomorphic copy of $\nn$.
This computation of canonical forms for relations on $\nn$ was originally done by Ramsey in \cite{ramsey}; a closely related result was rediscovered by Erd{\H o}s and Rado~\cite{erdradeq}.
It should be clear that the richer the structure of $A$ is the more informative is a solution to the expansion problem for $A$ in $\CC$.

Expansion problems for various pairs $\pr{A}{\CC}$ occur frequently in topological dynamics in the form of questions about representations of the universal minimal flow of the automorphism group of an ultrahomogeneous structure.
See \cite{kpt} for further details on the connections between Ramsey theory and topological dynamics of automorphism groups where a precise correspondence is given between Ramsey degree calculus and representation theory for universal minimal flows of such groups.

In this paper, it will be proved, assuming large cardinals, that the $2$-dimensional Ramsey degree of the topological space $\Q$ of the rationals within the class of all regular, non-left-separated spaces with a point countable base is at most $2$.
Our result is provably optimal for metrizable spaces.
The following terminology will make certain results easier to state.
\begin{Def} \label{def:degree}
 Let $X$ be any set.
 For any cardinal number $\kappa$, ${\[X\]}^{\kappa}$ is the collection of subsets of $X$ of cardinality $\kappa$, and ${\[X\]}^{< \kappa}$ denotes the collection of subsets of $X$ that have cardinality less than $\kappa$.
 
 Let $X$ and $Y$ be topological spaces.
 For natural numbers $k, l, t \geq 1$, we will write
 \begin{align*}
 X \rightarrow {\left( Y \right)}^{k}_{l, t}
 \end{align*}
 to mean that for every set $L$ of cardinality $l$ and every coloring $c: {\[X\]}^{k} \rightarrow L$, there exist a subspace $Y' \subseteq X$  homeomorphic to $Y$ and a subset $T \subseteq L$  of cardinality $t$ such that $\left\{c(v): v \in {\[Y'\]}^{k}\right\} \subseteq T$.
 If $t=1,$ then it is not recorded in this notation, i.e.\@, we write $X \rightarrow {\left(  Y \right)}^{k}_{l}$ instead of $X \rightarrow {\left(  Y \right)}^{k}_{l, 1}.$
 
 For a natural number $k \geq 1$, the \emph{$k$-dimensional Ramsey degree of a space $Y$ inside the space $X$}, if it exists, is the least natural number $t \geq 1$ with the property that $X \rightarrow {\left( Y \right)}^{k}_{l, t}$  for all $l < \omega.$ 
\end{Def}
Using the terminology of Definition \ref{def:degree}, one of the important consequences of the main result of this paper may be stated as follows.
\begin{Theorem} \label{thm:metricspaces}
 Assume either that there is a proper class of Woodin cardinals or an uncountable strongly compact cardinal.
 Let $X$ be a non $\sigma$-discrete metric space.
 Then the $2$-dimensional Ramsey degree of $\Q$ in $X$ is at most 2.
\end{Theorem}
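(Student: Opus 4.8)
The plan is to derive Theorem~\ref{thm:metricspaces} as a corollary of the general theorem of this paper, which asserts (under the same large cardinal hypothesis) that $X \rightarrow {\left( \Q \right)}^{2}_{l,2}$ holds for every $l < \omega$ whenever $X$ is a regular space with a point-countable base that is not left-separated. Granting this, everything specific to Theorem~\ref{thm:metricspaces} is the verification that a non $\sigma$-discrete metric space $X$ satisfies those three requirements. Regularity is immediate, and it is standard (one direction of Bing's metrization theorem) that every metrizable space has a $\sigma$-discrete, hence point-countable, base. So the reduction rests entirely on the implication that a metric space which is not $\sigma$-discrete cannot be left-separated; equivalently, that every left-separated metric space is $\sigma$-discrete.

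To prove that implication I would argue directly. Fix a compatible metric $d$ and a well-ordering $\langle x_\alpha : \alpha < \kappa \rangle$ of $X$ witnessing left-separation, so that every initial segment $I_\alpha = \{ x_\beta : \beta < \alpha \}$ is closed. For each $\alpha$, since $x_\alpha \notin I_\alpha$ and $I_\alpha$ is closed, choose $n_\alpha < \omega$ with $d(x_\alpha, x_\beta) \geq 2^{-n_\alpha}$ for all $\beta < \alpha$, and set $D_n = \{ x_\alpha : n_\alpha = n \}$. If $x_\alpha$ and $x_\beta$ are distinct points of $D_n$, say with $\beta < \alpha$, then $d(x_\alpha, x_\beta) \geq 2^{-n_\alpha} = 2^{-n}$; hence $D_n$ is $2^{-n}$-separated, in particular discrete, and $X = \bigcup_{n < \omega} D_n$ is $\sigma$-discrete, as required. (The same computation run on a right-separating well-order shows that a scattered metric space is $\sigma$-discrete too, so a non $\sigma$-discrete metric space always contains a nonempty dense-in-itself subspace; a Sierpi\'nski-type coloring there keeps the degree from dropping to $1$, which is the optimality asserted just before the theorem.)

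It then remains to recall how the general theorem is obtained, along the lines sketched in the introduction. Given such an $X$ and a coloring $c : {[X]}^{2} \rightarrow l$, one uses the large cardinal hypothesis — through the ideas behind Woodin's stationary tower forcing \cite{woodin88}, in the form already developed in \cite{genericcontinuity} — to construct a Baire space $Z$ together with a continuous, nowhere constant map $f : Z \rightarrow X$ (constant on no nonempty open subset of $Z$) for which the induced coloring $c_f : {[Z]}^{2} \rightarrow l+1$ is Baire measurable in a suitable sense. The problem is thereby transported to $Z$, where one plays Banach--Mazur games to carve out a subspace $Q \subseteq Z$ homeomorphic to $\Q$ on which $f$ is injective and on whose pairs $c_f$, and hence $c$, takes at most two values; the image $f[Q]$ is then the copy of $\Q$ in $X$ that is wanted. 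I expect the main obstacle to be the first half of this — securing a Baire space $Z$ and a continuous, nowhere constant $f$ whose associated colorings are Baire measurable, which is where the large cardinals genuinely enter. The game argument on $Z$ is also delicate, since it must simultaneously keep $Q$ dense-in-itself and keep $f$ one-to-one on $Q$; and it is here, rather than in obtaining a merely finite bound, that the value $2$ is won.
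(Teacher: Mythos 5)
Your proposal is correct and takes essentially the same route as the paper: Theorem \ref{thm:metricspaces} is obtained there as an immediate consequence of Theorem \ref{thm:general}, via exactly the two facts you verify, namely that metrizable spaces have point-countable bases and that a metric space is left-separated if and only if it is $\sigma$-discrete (your $2^{-n}$-separation argument gives the direction actually needed, which the paper treats as standard). The only caveat is that your closing sketch of the general theorem is the heuristic from the introduction; the paper's actual proof works directly with the countable stationary tower ${\Q}_{<\delta}$ and a Banach--Mazur-type game on stationary-set conditions rather than literally constructing a Baire space $Z$ and a continuous nowhere constant map, but this does not affect the correctness of your reduction.
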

That non $\sigma$-discreteness is an optimal restriction in this theorem follows from the result below.
\begin{Theorem}[\cite{todorcevicweissnotes}] \label{thm:todorcevicweiss}
If $X$ is a $\sigma$-discrete metric space, then there is $c: {[X]}^{2} \rightarrow \omega$ such that $c''{[Y]}^{2} = \omega$ for all $Y \subseteq X$ homeomorphic to $\Q.$
\end{Theorem}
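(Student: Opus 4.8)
To prove Theorem~\ref{thm:todorcevicweiss}, the plan is to realise $X$ as a countable union of relatively discrete pieces and to colour a pair by a ``rank'' measured against the metric scale of the pair; the point will be that a subspace homeomorphic to $\Q$, being crowded and zero-dimensional, is forced to realise every rank, and indeed every rank again inside every clopen subpiece of it.

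First, using $\sigma$-discreteness, I would fix a partition $X=\bigsqcup_{n<\omega}E_{n}$ into relatively discrete subsets (begin with any countable cover of $X$ by discrete subspaces and disjointify), and write $n(x)$ for the unique $n$ with $x\in E_{n}$. The topological input is that a relatively discrete subset of a crowded space is nowhere dense: hence, for $Y\subseteq X$ homeomorphic to $\Q$, each $Y\cap E_{n}$ is nowhere dense in $Y$, and since a finite union of nowhere dense sets is nowhere dense, so is each $Y\cap(E_{0}\cup\dots\cup E_{n})$; as $Y$ is the increasing union of these, it follows that for every nonempty relatively open $V\subseteq Y$ and every $N<\omega$ the set $\{y\in V:n(y)>N\}$ is dense in $V$. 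Since a nonempty clopen subset of a copy of $\Q$ is again a copy of $\Q$, this local unboundedness of $n$ is inherited by every clopen subpiece of $Y$, and that is what a recursion inside $Y$ will exploit.

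Next I would build the colouring $c\colon[X]^{2}\to\omega$ from the stratification and the metric. As a first approximation: for $x\neq y$ put $r=d(x,y)$, let $\mu(\{x,y\})$ be the least $n$ with $\overline{B}(x,r)\cup\overline{B}(y,r)$ meeting $E_{n}$ --- this is well defined and at most $\min(n(x),n(y))$ because $x$ and $y$ lie in these balls --- and set $c(\{x,y\})=\min(n(x),n(y))-\mu(\{x,y\})$. So $c$ records how much higher the levels of $x$ and $y$ are than the coarsest stratum already represented at the scale of the pair, and the feature being sought is self-similarity: on passing to a clopen neighbourhood of small diameter all of whose points have high level, both $\min(n(x),n(y))$ and $\mu(\{x,y\})$ move upward, so $c$ again ranges over all of $\omega$. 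Granting this, one proves $c''[Y]^{2}=\omega$ for every $Y\cong\Q$ by fixing a target $i<\omega$ and, using the density statement above, constructing a descending chain of nonempty clopen sets $Y=V_{0}\supseteq V_{1}\supseteq\cdots$ together with witnessing points that activate one further stratum below the attained levels at each step, ending with a pair of value exactly $i$; the countable clopen base of $Y$ makes the recursion legitimate.

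The difficulty --- and where the first approximation must be sharpened --- is that a topological copy of $\Q$ in a $\sigma$-discrete metric space can be extremely uniform: it can be pushed into an arbitrary clopen cell of $X$, forced to have every point at level above a prescribed bound, compressed into an arbitrarily small ball, and arranged to skip prescribed levels along some of its branches. A rank that merely counts strata crossed below the first coordinate at which two points diverge (a purely combinatorial rank on a tree model of $\Q$, with no reference to the metric) is therefore evaded; the metric scale must genuinely enter. I expect one must refine each $E_{n}$ into $2^{-k}$-uniformly discrete pieces and let the colour of a pair compare its level against its metric scale simultaneously across several of these refinements, so that no level can be skipped. Verifying that the resulting colouring cannot be dodged on any copy of $\Q$, and carrying through the attendant recursion, is the main obstacle.
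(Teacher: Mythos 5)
You should first note that the paper does not actually prove this statement: it is quoted from the unpublished Todorcevic--Weiss manuscript \cite{todorcevicweissnotes}, so there is no proof in the text to match your argument against, and your proposal has to stand on its own. As it stands it has a genuine gap, which you yourself flag: the only colouring you actually define (the ``first approximation'' $c(\{x,y\})=\min(n(x),n(y))-\mu(\{x,y\})$) does not work, and the sharpened colouring that is supposed to replace it is never defined, let alone verified. Concretely, a crowded metric space can contain a \emph{dense} relatively discrete subset, so nothing in your disjointification prevents $E_{0}$ from being dense in $X$; in that case $\mu(\{x,y\})=0$ for every pair, since the nonempty open set $B(x,r)$ already meets $E_{0}$, and so $c(\{x,y\})=\min(n(x),n(y))$. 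Take $X=\Q$, let $E_{0}$ be a dense relatively discrete subset, $E_{1}=\{p\}$ one further point, and let the remaining $E_{n}$ be singletons. Then $Y=\Q\setminus(E_{0}\cup E_{1})$ is dense in $\Q$ (as $E_{0}$ has empty interior), hence crowded, countable and metrizable, hence homeomorphic to $\Q$, and every pair from $Y$ receives a colour $\geq 2$: the values $0$ and $1$ are omitted. The same example falsifies the self-similarity step in your third paragraph: $\mu$ need not ``move upward'' on small clopen sets all of whose points have high level, because low strata of $X\setminus Y$ can crowd every ball at every scale, so the recursion you sketch never gets off the ground with this colouring.

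What is missing is precisely the heart of the theorem. The right preliminary move, which you gesture at in your last paragraph, is to refine the decomposition so that each piece is uniformly discrete, say $E_{n}$ is $\varepsilon_{n}$-separated, so that any set of diameter below $\varepsilon_{n}$ meets $E_{n}$ in at most one point; one then needs a Baumgartner-style \emph{counting} colour --- for instance, counting the points $z$ of level below $\min(n(x),n(y))$ whose separation constant is large compared with $d(x,y)$ and which lie within distance $d(x,y)$ of $\{x,y\}$, a count kept finite exactly by uniform discreteness --- and, crucially, a verification by recursion inside an arbitrary copy $Y$ of $\Q$ that every value is attained. That construction and verification are exactly what you defer as ``the main obstacle,'' so the proposal stops short of a proof. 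Your second paragraph is correct and is indeed the right topological input (relatively discrete subsets of a crowded space are nowhere dense, hence for every $N$ the points of level above $N$ are dense in every nonempty relatively open subset of $Y$, and this passes to clopen subcopies), but by itself it only constrains $Y$; it does not produce a colouring of ${[X]}^{2}$ with the required property.
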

It follows that the Ramsey degree of $\Q$ does not exist (is infinite) in any $\sigma$-discrete metric space.
The equivalence stated in the following corollary encapsulates Theorem \ref{thm:metricspaces} and the fact that it is optimal for metrizable spaces.
\begin{Cor} \label{cor:general}
 Assume either that there is a proper class of Woodin cardinals or an uncountable strongly compact cardinal.
 Then the following are equivalent for every metrizable space $X:$
 \begin{enumerate}
  \item[(a)]
  $X$ is not $\sigma$-discrete;
  \item[(b)]
  $X \rightarrow {\left( \Q \right)}^{2}_{l, 2}$ for every natural number $l \geq 1$.
 \end{enumerate}
\end{Cor}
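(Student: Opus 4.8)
The plan is to obtain Corollary~\ref{cor:general} by combining Theorem~\ref{thm:metricspaces} with the optimality result Theorem~\ref{thm:todorcevicweiss}; no large-cardinal hypothesis is needed beyond the one already consumed by Theorem~\ref{thm:metricspaces}, which is why the same assumption is carried into the statement of the corollary. The content of the equivalence lies entirely in Theorem~\ref{thm:metricspaces}; the corollary is just a repackaging.

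First I would dispatch the implication (a)~$\Rightarrow$~(b). Assume $X$ is a metrizable space that is not $\sigma$-discrete. Then $X$ is a non $\sigma$-discrete metric space, so Theorem~\ref{thm:metricspaces} applies and gives that the $2$-dimensional Ramsey degree of $\Q$ in $X$ is at most $2$. Unwinding Definition~\ref{def:degree}, this says that the set of those $t \geq 1$ for which $X \rightarrow {\left( \Q \right)}^{2}_{l, t}$ holds for all $l < \omega$ is nonempty with least element $\leq 2$; since a subset of $L$ of cardinality at most $2$ is contained in one of cardinality exactly $2$ when $|L| \geq 2$ (and the case $|L| = 1$ is trivial), it follows that $X \rightarrow {\left( \Q \right)}^{2}_{l, 2}$ for every $l \geq 1$, which is precisely (b).

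Next I would prove (b)~$\Rightarrow$~(a) by contraposition. Suppose $X$ is metrizable and $\sigma$-discrete. By Theorem~\ref{thm:todorcevicweiss} there is a coloring $c : {\[X\]}^{2} \rightarrow \omega$ such that $c''{\[Y\]}^{2} = \omega$ for every $Y \subseteq X$ homeomorphic to $\Q$. Collapse $c$ to a finite coloring $d : {\[X\]}^{2} \rightarrow 3$ by setting $d(v) = c(v)$ if $c(v) < 2$ and $d(v) = 2$ otherwise. If $X$ contains no subspace homeomorphic to $\Q$ then $X \not\rightarrow {\left( \Q \right)}^{2}_{3, 2}$ holds vacuously; otherwise, for any $Y' \subseteq X$ homeomorphic to $\Q$ we have $c''{\[Y'\]}^{2} = \omega$, so $c$ attains each of the values $0, 1, 2$ on ${\[Y'\]}^{2}$, whence $d$ attains each of $0, 1, 2$ there and $\left\{ d(v) : v \in {\[Y'\]}^{2} \right\}$ has cardinality $3$, which no two-element subset of $3$ can contain. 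Taking $L = 3$ and the coloring $d$ thus witnesses $X \not\rightarrow {\left( \Q \right)}^{2}_{3, 2}$, so (b) fails.

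I do not expect any real obstacle internal to the corollary: both implications are immediate once Theorems~\ref{thm:metricspaces} and~\ref{thm:todorcevicweiss} are in hand. The only genuinely hard step is Theorem~\ref{thm:metricspaces} itself, whose proof proceeds by transferring the coloring problem (using large cardinals) to a Baire space $Z$ equipped with a continuous, nowhere constant map $f : Z \to X$ along which the induced coloring is Baire measurable, and then running a Banach--Mazur game to extract a copy of $\Q$ using only two colors on which $f$ is one-to-one; that argument occupies the remainder of the paper.
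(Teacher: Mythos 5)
Your proposal is correct and matches the paper's intended argument exactly: the paper derives Corollary~\ref{cor:general} by combining Theorem~\ref{thm:metricspaces} for (a)$\Rightarrow$(b) with Theorem~\ref{thm:todorcevicweiss} (collapsing the $\omega$-coloring to finitely many colors) for (b)$\Rightarrow$(a), which is precisely what you do. The only difference is bookkeeping (your explicit three-color collapse and the degenerate $l=1$ case of Definition~\ref{def:degree}), which the paper leaves implicit.
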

Note that condition (a) is equivalent to $X \rightarrow {\left( \omega+1 \right)}^{1}_{\omega}, $ so we have here an interesting analogy between Corollary \ref{cor:general} and a theorem of Todorcevic from \cite{posetpartition} stating that for any partial order $P$, $P \rightarrow {\left( \omega \right)}^{1}_{\omega}$ if and only if $P \rightarrow {\left( \alpha \right)}^{2}_{k}$ for all $\alpha < {\omega}_{1}$ and $k < \omega$.

The special case of Theorem \ref{thm:metricspaces} restricted to uncountable sets $X \subseteq \R$ is particularly interesting since in this case we have a coloring $s:{[X]}^{2} \rightarrow 2$ which witnesses $X \not\rightarrow {\left( \Q \right)}^{2}_{2},$ i.e.\@ that the Ramsey degree of $\Q$ in $X$ is at least 2, and therefore equal to 2.
Recall how Sierpi{\' n}ski's coloring is defined using a well-ordering of the reals $\wor$ and the usual ordering $<$. 
Define $s: {\[\R\]}^{2} \rightarrow \{0, 1\}$ by stipulating that $s(\{x, y\}) = 0$ if and only if $\wor$ and $<$ agree on $\{x, y\}$, for all pairs $\{x, y\} \in {\[\R\]}^{2}$.
To see that this coloring establishes $\R \not\rightarrow {\left( \Q \right)}^{2}_{2},$ note that any monochromatic subset of $\R$ must either be well-ordered or reverse well-ordered by $<$.
Hence no subset of $\R$ which contains a $\zz$-chain in the usual ordering can be monochromatic.
Let ${E}^{S}$ be the equivalence relation on ${[\R]}^{2}$ that has the two sets ${s}^{-1}(i) (i<2)$ as equivalence classes.
A single Woodin cardinal is sufficient to prove the restriction of Theorem~\ref{thm:metricspaces} to uncountable sets $X \subseteq \R$.
\begin{Cor}\label{cor:basis}
Assume either that there is a Woodin cardinal or an uncountable strongly compact cardinal.
Let $X$ be an uncountable set of reals.
Then for every equivalence relation $E$ on ${[X]}^{2}$ with finitely many equivalence classes, there is $Y \subseteq X$ homeomorphic to $\Q$ such that $E\restrict {\[Y\]}^{2}$ is coarser than
${E}^{S}\restrict {\[Y\]}^{2}$. 
\end{Cor}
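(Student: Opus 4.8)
The plan is to deduce this from Theorem~\ref{thm:metricspaces}, or rather from its restriction to uncountable sets of reals, which (as noted just above the corollary) requires only a single Woodin cardinal and therefore matches the hypothesis here. First I would note that an uncountable $X \subseteq \R$ is automatically not $\sigma$-discrete: a discrete subspace $D$ of $\R$ is countable, since each of its points is isolated in $D$ and hence carries a rational-endpoint interval meeting $D$ only in that point, giving an injection of $D$ into $\Q^{2}$; so a $\sigma$-discrete subspace of $\R$ is a countable union of countable sets. Thus the Ramsey-degree conclusion of Theorem~\ref{thm:metricspaces} is available for $X$, under exactly the large-cardinal hypothesis assumed in the corollary.

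Next I would pass from the equivalence relation to a coloring by forming a product with the Sierpi\'{n}ski coloring. Fix a well-ordering $\wor$ of $\R$ and the associated coloring $s \colon {[\R]}^{2} \to 2$, so that ${E}^{S}$ has the two sets ${s}^{-1}(0), {s}^{-1}(1)$ as its classes. Given $E$ with $l < \omega$ classes, choose a coloring $d \colon {[X]}^{2} \to l$ whose fibres are exactly the $E$-classes, and define $c \colon {[X]}^{2} \to 2 \times l$ by $c(v) = \pr{s(v)}{d(v)}$, a coloring of ${[X]}^{2}$ by $2l$ colors. Applying the (reals-only version of) Theorem~\ref{thm:metricspaces} to $c$, I obtain $Y \subseteq X$ homeomorphic to $\Q$ together with $T \subseteq 2 \times l$ with $\lc T \rc \le 2$ and $c''{[Y]}^{2} \subseteq T$.

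The one elementary fact to verify is that $s$ is not constant on ${[Y]}^{2}$. Indeed, if $s$ were constant on ${[Y]}^{2}$, then $\wor \restrict Y$ would coincide either with ${<} \restrict Y$ or with its reverse, so $Y$ would be well-ordered or reverse well-ordered by $<$; but any subset of $\R$ that is well-ordered (or reverse well-ordered) by $<$ is scattered — every nonempty subset has a $<$-least (resp.\@ $<$-greatest) element, which is isolated in it — whereas a topological copy of $\Q$ has no isolated points and so is not scattered. Hence $s$ takes both values on ${[Y]}^{2}$, which forces $c''{[Y]}^{2}$ to contain one pair with first coordinate $0$ and one with first coordinate $1$; since $\lc c''{[Y]}^{2} \rc \le 2$, we get $c''{[Y]}^{2} = \{\pr{0}{a}, \pr{1}{b}\}$ for some $a, b < l$. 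Consequently, for every $v \in {[Y]}^{2}$ the value $d(v)$ is determined by $s(v)$ (it is $a$ when $s(v) = 0$ and $b$ when $s(v) = 1$), so $s(u) = s(v)$ implies $d(u) = d(v)$ implies $u \mathrel{E} v$, for all $u, v \in {[Y]}^{2}$. That is exactly the statement that every class of ${E}^{S} \restrict {[Y]}^{2}$ is contained in a single class of $E \restrict {[Y]}^{2}$, i.e.\@ that $E \restrict {[Y]}^{2}$ is coarser than ${E}^{S} \restrict {[Y]}^{2}$ (with $a = b$ allowed, in which case $E \restrict {[Y]}^{2}$ is the trivial equivalence relation).

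I do not expect any genuine obstacle in this argument: all of the difficulty is concentrated in Theorem~\ref{thm:metricspaces} itself, and the corollary is a short combinatorial deduction. The only points that need a moment's attention are lining up the large-cardinal hypotheses (which is why one invokes the reals-only form of the main theorem) and the triviality that a topological copy of $\Q$ is neither well-ordered nor reverse well-ordered by the usual ordering of $\R$.
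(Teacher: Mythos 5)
Your proposal is correct and follows essentially the same route as the paper: pair the given coloring with the Sierpi\'{n}ski coloring $s$, apply the reals-only form of Theorem~\ref{thm:metricspaces} (which needs only one Woodin cardinal), and use the fact that $s$ cannot be constant on a copy of $\Q$ to see that $E$ is determined by $s$ on the resulting $Y$. The only difference is that you spell out the non-$\sigma$-discreteness of uncountable sets of reals and the non-constancy of $s$ via scatteredness, details the paper leaves to its surrounding discussion.
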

\begin{proof}
To see this, let $l \geq 1$ be a natural number and let $c: {\[X\]}^{2} \rightarrow l$ be a coloring which is giving us an equivalence relation on ${[X]}^{2}$ with $l$ classes.
Define a new coloring $d: {\[X\]}^{2} \rightarrow l \times 2$ by setting $d(\{x, y\}) = \pr{c(\{x, y\})}{s(\{x, y\})}$, for every $\{x, y\} \in {\[X\]}^{2}$.
Here $s$ is Sierpi{\' n}ski's coloring defined above from an arbitrary well-ordering of $\R$.
Applying Theorem \ref{thm:metricspaces}, there must be a set $Y \subseteq X$ as well as colors $i, j < l$ such that $Y$ is homeomorphic to $\Q$ and $\left\{c(v): v \in {\[Y\]}^{2}\right\} \subseteq \{\pr{i}{0}, \pr{j}{1}\}$.
If $i = j$, then $c$ is constant on ${\[Y\]}^{2}$.
And if $i \neq j$, then $c$ is equivalent to $s$ on ${\[Y\]}^{2}$, with the color $i$ playing the role of the color $0$ of $s$ and $j$ playing the role of $1$.
\end{proof}
Theorem \ref{thm:metricspaces} also implies that any well-ordering $\wor$ solves the $2$-dimensional expansion problem for $\Q$ within the class of all uncountable sets of reals.
\begin{Cor} \label{cor:basisandexpansion}
 Assume either that there is a Woodin cardinal or an uncountable strongly compact cardinal.
 Let $\wor$ be any well-ordering of $\R$ and let $<$ be the usual ordering of $\R$.
 Then for every uncountable $X \subseteq \R$ and every binary relation $M \subseteq {X}^{2}$, there exists a set $Y \subseteq X$, which is homeomorphic to $\Q$, such that $M \cap {Y}^{2}$ is equal to one of the following relations restricted to $Y:$ $\top,$ $\bot,$ $=,$ $\neq,$ $<,$ $>,$ $\leq,$ $\geq,$ $\wor,$ ${>}_{\mathtt{wo}},$ ${\leq}_{\mathtt{wo}},$ ${\geq}_{\mathtt{wo}},$
 $< \cap \: \wor,$ $< \cap \: {>}_{\mathtt{wo}},$ $> \cap \: \wor,$ $> \cap \: {>}_{\mathtt{wo}},$ $\leq \cap \: {\leq}_{\mathtt{wo}},$ $\leq \cap \: {\geq}_{\mathtt{wo}},$ $\geq \cap \: {\leq}_{\mathtt{wo}},$ and $\geq \cap \: {\geq}_{\mathtt{wo}}.$
\end{Cor}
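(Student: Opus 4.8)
The plan is to derive this from Theorem~\ref{thm:metricspaces}, or rather from its restriction to uncountable sets of reals (which needs only a single Woodin cardinal), by coding the relation $M$ together with Sierpi\'{n}ski's coloring into one finite coloring of the pairs of $X$.

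First I would dispose of the diagonal. The sets $\{x\in X:\pr{x}{x}\in M\}$ and $\{x\in X:\pr{x}{x}\notin M\}$ partition $X$, so one of them, say $X_{0}$, is uncountable; after replacing $X$ by $X_{0}$ we may assume that $M$ meets the diagonal of $X$ either in all of it or not at all, which already decides $M\cap{Y}^{2}$ on the diagonal for any $Y\subseteq X$. Being an uncountable set of reals, $X$ is now a non-$\sigma$-discrete metric space, so Theorem~\ref{thm:metricspaces} is available for it.

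Next, with $\wor$ and $<$ as in the statement and $s:{\[\R\]}^{2}\to 2$ Sierpi\'{n}ski's coloring ($s(\{x,y\})=0$ iff $\wor$ and $<$ agree on $\{x,y\}$), let $\chi_{M}$ denote the characteristic function of $M$ and define $c:{\[X\]}^{2}\to 2\times2\times2$ by $c(\{x,y\})=\pr{\pr{\chi_{M}(x,y)}{\chi_{M}(y,x)}}{s(\{x,y\})}$ whenever $x<y$. Theorem~\ref{thm:metricspaces} yields $Y\subseteq X$ homeomorphic to $\Q$ and a set $T$ of at most two colors with $c''{\[Y\]}^{2}\subseteq T$. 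Now a homeomorphic copy of $\Q$ is dense in itself, hence is neither well-ordered nor reverse well-ordered by $<$ (a well-ordering by $<$ would isolate the least element of $Y$ and a reverse well-ordering its greatest element); so $\wor$ and $<$ neither agree on all pairs from $Y$ nor disagree on all of them, and therefore the third coordinate of $c$ takes both values on ${\[Y\]}^{2}$. Thus $|T|=2$, say $T=\{\pr{\pr{\varepsilon_{0}}{\delta_{0}}}{0},\pr{\pr{\varepsilon_{1}}{\delta_{1}}}{1}\}$ with all $\varepsilon_{i},\delta_{i}\in 2$. Unwinding the definition of $s$, this means: for every $\{x,y\}\in{\[Y\]}^{2}$ with $x<y$, if $x\wor y$ then $\pr{x}{y}\in M\Leftrightarrow\varepsilon_{0}=1$ and $\pr{y}{x}\in M\Leftrightarrow\delta_{0}=1$, while if $y\wor x$ then $\pr{x}{y}\in M\Leftrightarrow\varepsilon_{1}=1$ and $\pr{y}{x}\in M\Leftrightarrow\delta_{1}=1$.

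It then remains to read off the conclusion. Off the diagonal, $M\cap{Y}^{2}$ is exactly the union of those among the four cells $\{\pr{x}{y}:x<y\wedge x\wor y\}$, $\{\pr{x}{y}:x<y\wedge y\wor x\}$, $\{\pr{x}{y}:y<x\wedge x\wor y\}$, $\{\pr{x}{y}:y<x\wedge y\wor x\}$ whose associated bit, respectively $\varepsilon_{0},\varepsilon_{1},\delta_{1},\delta_{0}$, equals $1$; and on the diagonal it is whatever was fixed in the first step. So $M\cap{Y}^{2}$ is governed by finitely many cases, and in each one compares it with the cells occupied by $<,>,\leq,\geq,\wor,{>}_{\mathtt{wo}},{\leq}_{\mathtt{wo}},{\geq}_{\mathtt{wo}}$ and their pairwise intersections and checks that it is the restriction to $Y$ of one of the relations on the list; for any case that is not immediately of this form, one more application of Theorem~\ref{thm:metricspaces} to an appropriate coloring of ${\[Y\]}^{2}$ passes to a subcopy of $\Q$ on which it is. I expect this final bookkeeping to be the real content: one has to verify that the finitely many patterns that can persist on a copy of $\Q$ are exactly the relations named, i.e.\ that the usual ordering, an arbitrary well-ordering, and Sierpi\'{n}ski's coloring together already realize, on a suitable copy of $\Q$, every binary relation that no partition theorem can simplify any further.
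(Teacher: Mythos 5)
Your route is the one the paper intends: its proof of Corollary~\ref{cor:basisandexpansion} is nothing more than the remark that one repeats the canonical-forms argument of Theorem 1.7 of \cite{introramsey} with Theorem~\ref{thm:metricspaces} in place of Ramsey's theorem, and your reduction matches that template correctly up to the last step. Fixing the diagonal on an uncountable subset, coloring $\{x,y\}$ (for $x<y$) by $\pr{\chi_{M}(x,y)}{\chi_{M}(y,x)}$ together with the Sierpi\'nski bit, and observing that a copy of $\Q$ in $\R$ can be neither well-ordered nor reverse well-ordered by $<$ (so both Sierpi\'nski values occur on every such copy and the two surviving colors have distinct third coordinates) is all sound, and it does show that on $Y$ membership in $M$ is constant on each of the four cells $<\cap\:\wor$, $<\cap\:{>}_{\mathtt{wo}}$, $>\cap\:\wor$, $>\cap\:{>}_{\mathtt{wo}}$, so that $M\cap Y^{2}$ is the union of (possibly) the diagonal with some set of these cells.

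The gap is the final step, which you defer as bookkeeping and protect with a fallback that is false. The reduction leaves $2^{5}=32$ possible patterns, whereas the relations named in the statement realize only $20$ of them: the cell-sets occurring in the list are $\emptyset$, the four singletons, the four pairs giving $<$, $>$, $\wor$, ${>}_{\mathtt{wo}}$, and all four cells, each with or without the diagonal. The remaining patterns (the two symmetric pairs of cells and the four triples) cannot be removed by ``one more application of Theorem~\ref{thm:metricspaces}'': on every copy of $\Q$ all four cells are non-empty, since both Sierpi\'nski classes persist, so the set of occupied cells is invariant under passing to subcopies and distinct cell-sets have distinct restrictions. Concretely, if $M=\{\pr{a}{b}: a\neq b \ \text{and} \ (a<b \Leftrightarrow a\wor b)\}$, then for every $Y\subseteq X$ homeomorphic to $\Q$ one gets $M\cap Y^{2}=\bigl((<\cap\:\wor)\cup(>\cap\:{>}_{\mathtt{wo}})\bigr)\restrict Y$, a symmetric relation that is neither empty nor all of $\neq\restrict Y$, hence not the restriction to $Y$ of any relation on the list; similarly $M=\,\geq\cup\:{\geq}_{\mathtt{wo}}$ (the complement of $<\cap\:\wor$) forces a three-cell union that is not listed. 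So the case analysis you postpone is not routine and cannot be completed in the form you describe: what your argument actually proves is that $M\cap Y^{2}$ is one of the $32$ unions of $=$ with cells, and the sentence claiming that a further partition argument rescues any unlisted case has to be deleted; the discrepancy with the $20$-relation list is a defect of the list itself (the symmetric ``agreement/disagreement'' patterns and the unions are missing), not something additional colorings can repair.
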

\begin{proof}
This is similar to the proof of Theorem 1.7 in \cite{introramsey} where in the crucial step we use Theorem \ref{thm:metricspaces} in place of Ramsey's theorem.
\end{proof}
A weak form of the conclusion of Theorem \ref{thm:metricspaces} when we restrict the class of all non $\sigma$-discrete metric spaces to the singleton $\{\R\}$ was first conjectured by Galvin in the 1970s (\cite{galvinletter}), and Galvin's conjecture, even in this weak form, remained unproved until our work.
In an earlier unpublished note, Galvin had proved that for every coloring of ${\[\Q\]}^{2}$ into finitely many colors, there exists a $Y \subseteq \Q$ which is order-isomorphic to $\Q$ such that at most $2$ colors occur in ${\[Y\]}^{2}$.
This was generalized by Laver, who showed that for each natural number $k \geq 1$, there exists a number ${t}_{k}$ with the property that for every coloring of ${\[\Q\]}^{k}$ into finitely many colors, there exists a $Y \subseteq \Q$ which is order-isomorphic to $\Q$ such that at most ${t}_{k}$ colors occur in ${\[Y\]}^{2}$.
The optimal value of ${t}_{k}$ was computed by Devlin~\cite{devlind}.
He showed that ${t}_{k} = {T}_{2k-1}$ is the minimal natural number that witnesses Laver's result, where $\tan(x) = {\sum}_{n = 0}^{\infty}{\frac{{T}_{n}}{n!}{x}^{n}}$.
Furthermore, Devlin's theorems produce for each natural number $k \geq 1$ a finite list of canonical colorings of ${\[\Q\]}^{k}$ into at most ${T}_{2k-1}$ colors such that an arbitrary coloring of ${\[\Q\]}^{k}$ into any finite number of colors is equivalent to one of the canonical colorings on an order isomorphic substructure of $\Q$. 
A recent exposition of Devlin's work can be found in Chapter 6 of \cite{introramsey}.

Baumgartner~\cite{baumtop} was the first to prove  that there is a significant difference when the topological structure of $\Q$ is considered instead of its order structure.
He found a coloring $c: {\[\Q\]}^{2} \rightarrow \nn$ such that for any $X \subseteq \Q$, if $X$ is homeomorphic to $\Q$, then for all $n \in \nn$, there exists $v \in {\[X\]}^{2}$ with $c(v) = n$.
In other words, he established the special case of Theorem \ref{thm:todorcevicweiss} saying that $\Q$ fails to have finite Ramsey degree in dimension $2$ within any countable metrizable space.
If a set of reals is homeomorphic to $\Q$, then it contains a subset which is order isomorphic to $\Q$, but the reserve is not true.
\emph{A priori}, this suggests that finding a homeomorphic copy of $\Q$ with some property is more difficult than finding an order isomorphic copy of $\Q$ with that property, and Baumgartner's result shows this is fundamentally more difficult in Ramsey theory.

It should also be noted that results of Shelah in \cite{sh:288} and \cite{sh:546} hinted at the truth of Theorem \ref{thm:metricspaces} for the space $\R$ because they established the consistency of a statement implying $\R \rightarrow {\left( \Q \right)}^{2}_{l, 2}$ for all $l < \omega.$
Assuming suitable large cardinals, Shelah constructed a model of set theory where for any natural number $l \geq 1$ and any coloring $c: {\[\R\]}^{2} \rightarrow l$, there is an uncountable set $X \subseteq \R$ such that $c$ uses at most $2$ colors on ${\[X\]}^{2}$.
It should be noted that Shelah's result is a consistency result, and not a direct implication.
In Shelah's model, the cardinality of $\R$ is quite large, for example it is a fixed point of the $\aleph$-operation, and there is not much control over the colorings of the pairs for any set of reals whose cardinality is smaller than that of $\R$.
Indeed, by a well-known theorem of Todorcevic~\cite{squarebracket}, if $X$ is any set of size ${\aleph}_{1}$, then there is a coloring of ${\[X\]}^{2}$ into ${\aleph}_{1}$ many colors so that every uncountable subset of $X$ contains a pair of every color.
Shelah's techniques do not provide information about the Ramsey degrees of $\Q$ in other topological spaces which do not contain a homeomorphic copy of $\R$.
\section{Ramsey degrees within a wider class of spaces}\label{intro2}
Several previous results in this general area of Ramsey theory for topological spaces had suggested that the $2$-dimensional Ramsey degree of $\Q$ should be $2$ within a much wider class of spaces.
These results concern the computation of the Ramsey degrees of a space much simpler than $\Q,$ namely the converging sequence, which is most naturally represented as the ordinal $\omega +1=\omega\cup\{\omega\}$ with its topology induced by the $\in$-ordering on ordinals.
For example, Baumgartner~\cite{baumtop} has shown that $X \not\rightarrow {\left( \omega+1 \right)}^{2}_{2}$ for every countable topological space $X$ and that on the other hand, $\Q \rightarrow {\left( \omega+1 \right)}^{2}_{l,2}$ for all $l<\omega.$
Thus, the space $\omega+1$ has Ramsey degree 2 in the class of all countable dense in itself metrizable spaces.
It turns out that trying to extend Baumgartner's computation of the Ramsey degrees of $\omega+1$ to an optimal class of spaces will also give us hints towards an optimal class of spaces where the Ramsey degree of $\Q$ is equal to 2.
For example, it is not difficult to show that if $X$ is any uncountable set of reals, then $X \rightarrow {\left( \omega+1 \right)}^{2}_{l}$ for all $l<\omega,$ i.e.\@, that the Ramsey degree of $\omega+1$ in the class of all uncountable sets of reals is equal to 1.
This was generalized in an unpublished note of the second author from 1996 (extending a previous result from \cite{todorcevicweissnotes}) as follows.
\begin{Def} \label{def:pointcountable}
 Let $\pr{X}{\TT}$ be a topological space.
 A base $\BB \subseteq \TT$ is said to be \emph{point-countable} if for each $x \in X$, $\{U \in \BB: x \in U\}$ is countable.
\end{Def} 
\begin{Theorem}[\cite{todorcevic96notes}] \label{omegaplusone}
 The following are equivalent for an arbitrary regular space $X$ with a point-countable base:
\begin{enumerate}
\item
there is no well-ordering of $X$ with all initial segments closed in $X;$
\item
$X \rightarrow {\left( \omega+1 \right)}^{2}_{2};$
\item
$X \rightarrow {\left( \omega+1 \right)}^{k}_{l}$ for all natural numbers $k,l\geq 1.$
\end{enumerate}
\end{Theorem}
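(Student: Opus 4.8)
The plan is to prove the cycle of implications $(3)\Rightarrow(2)\Rightarrow(1)\Rightarrow(3)$. The implication $(3)\Rightarrow(2)$ is immediate from Definition~\ref{def:degree} on taking $k=l=2$. For $(2)\Rightarrow(1)$ I would establish the contrapositive: given a well-ordering $\wor$ of $X$ all of whose initial segments are closed, produce a $2$-colouring of ${[X]}^{2}$ admitting no monochromatic copy of $\omega+1$. Observe first that a point-countable base makes $X$ first countable, so each $x\in X$ has a decreasing neighbourhood base $\langle U^{x}_{n}:n<\omega\rangle$, which by regularity we may take to satisfy $\overline{U^{x}_{n+1}}\subseteq U^{x}_{n}$, $\bigcap_{n}U^{x}_{n}=\{x\}$, and $U^{x}_{0}\subseteq\{x\}\cup\{y:x\wor y\}$, the latter set being open. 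The pivotal point is that if $A=\{a_{n}:n<\omega\}\cup\{p\}$ is a topological copy of $\omega+1$ with $a_{n}\to p$, then only finitely many $a_{n}$ can satisfy $a_{n}\wor p$, for otherwise $p$ would belong to the closure of the closed initial segment $\{y:y\wor p\}$, to which it does not belong; so after passing to a subcopy we may assume $p\wor a_{n}$ for all $n$. From here one defines the desired $2$-colouring by a Baumgartner-style argument, with $\wor$ furnishing the well-founded backbone along which the construction localises and the bases $\langle U^{x}_{n}\rangle$ supplying the fine structure that rules out monochromatic copies of $\omega+1$. This half of the theorem does not use non-left-separatedness.

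The substance is $(1)\Rightarrow(3)$, and the first step is a reduction to a canonical core. If $X$ is not left-separated then, by a standard Cantor--Bendixson-type argument — repeatedly deleting from the current closed remainder a non-empty relatively open subspace that happens to be left-separated, and taking intersections at limit stages — one arrives at a non-empty closed $Y\subseteq X$ with the property that every non-empty relatively open subspace of $Y$ is still non-left-separated (were the process instead to exhaust $X$, then the deleted left-separated pieces could be reassembled into a single left-separating well-ordering of $X$, contrary to hypothesis). Such a $Y$ inherits regularity and a point-countable base from $X$, hence is first countable, and it is crowded, since a relatively isolated point of $Y$ would be a one-point relatively open left-separated subspace. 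Any monochromatic copy of $\omega+1$ inside $Y$ for the restricted colouring is one inside $X$ for the original colouring, so we may and do assume from now on that $X$ itself is crowded, regular, first countable via a point-countable base $\BB$, and has the property that each of its non-empty open subspaces is non-left-separated.

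Now fix a colouring $c\colon{[X]}^{k}\to l$; I describe the case $k=2$, the case of general $k$ following by a standard stepping-up argument that reduces the dimension to $2$ while exploiting first countability. The aim is to construct recursively a sequence $a_{0},a_{1},\dotsc$ converging to a point $p$ such that $c$ is constant on ${[\{a_{n}:n<\omega\}\cup\{p\}]}^{2}$. Two features of the hypotheses drive the construction. First, the assumption that every non-empty open subspace of $X$ is non-left-separated guarantees that at each stage the set of points still available to serve as the limit of the remaining tail — essentially an intersection of $X$ with a basic neighbourhood, with the countably many already-committed points removed — remains non-left-separated, hence non-empty and crowded; and since a finite union of left-separated subspaces is left-separated, among the $l$ colour classes around a prospective limit point at least one stays non-left-separated and can be retained. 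Second, the point-countable base is used to control the colouring near a prospective limit point $p$: because $p$ lies in only countably many members of $\BB$, a tail point has only countably many possible ``neighbourhood traces'' relative to $p$, and a pressing-down argument along $\BB$ forces the relevant colour pattern to stabilise as the recursion proceeds. The crux is to arrange that the colour of each pair $\{a_{i},p\}$ involving the limit coincides with the colour of each pair $\{a_{i},a_{j}\}$ within the tail — that is, to obtain a \emph{genuinely} monochromatic copy rather than one using two colours.

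I expect that last point to be the main obstacle. Neither naive approach succeeds: fixing the colour near $p$ first and then thinning the tail by Ramsey's theorem leaves the colour of the tail pairs free to disagree, while thinning the tail first and then searching for a compatible limit point fails as well — Baumgartner's colourings show precisely that both approaches collapse on countable spaces, where no non-left-separated core is available. The two hypotheses therefore have to be exploited jointly: non-left-separatedness to maintain, at every step, a large reservoir of candidate limit points so that the colour on the $p$-pairs can be renegotiated, and the point-countability of the base to bound the combinatorics so that this renegotiation closes off after finitely much information has been read from $\BB$. Getting the bookkeeping of this recursion exactly right — simultaneously keeping the reservoir of limit points non-left-separated and forcing the colours of the $p$-pairs into agreement with the colours of the tail pairs by means of that finitely much information — is, I believe, the technical heart of the argument.
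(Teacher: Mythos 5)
Your proposal does not close the theorem; the decisive direction is missing. First, a remark on the easy parts: $(3)\Rightarrow(2)$ is indeed immediate, and for $(2)\Rightarrow(1)$ you correctly isolate the two relevant facts (each $x$ has a closed neighbourhood $U_x$ disjoint from $\{y: y \wor x\}$, and any sequence of a space converging to $p$ can meet $\{y: y\wor p\}$ only finitely often), but you never actually produce the colouring, deferring to ``a Baumgartner-style argument''. This direction is finished in one line by the colouring the paper itself records in Section \ref{intro2}: for $x\wor y$ set $c(x,y)=0$ iff $y\in U_x$; if $a_n\to p$ were monochromatic with $p\wor a_n$, colour $1$ is impossible because $U_p$ is a neighbourhood of $p$, and colour $0$ forces each $a_n$ to have only finitely many $\wor$-successors among the $a_m$ (otherwise $p\in U_{a_n}$ by closedness), contradicting well-foundedness. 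So that gap is cosmetic. Note also that the paper does not prove the theorem at all: it is quoted from Todorcevic's unpublished 1996 note, and only this $(2)\Rightarrow(1)$ computation appears in the text.

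The genuine gap is $(1)\Rightarrow(3)$, which is the entire content of the theorem and which your proposal explicitly leaves unproved: you describe a strategy and then state that arranging agreement between the colour of the pairs $\{a_i,p\}$ and the colour of the pairs inside the tail is ``the technical heart'' which you have not carried out; note that $(3)$ requires a genuinely one-coloured copy of $\omega+1$ for every $k,l$, and the ``standard stepping-up argument'' you invoke for $k>2$ does not exist off the shelf in this topological setting. Moreover, your preliminary reduction to an everywhere non-left-separated core rests on the unproved claim that if the Cantor--Bendixson-style deletion process exhausts $X$ then the deleted left-separated pieces reassemble into a left-separation of $X$. As a statement about regular spaces this is false: in $\omega_1$ every non-empty closed subspace contains a non-empty relatively open bounded, hence countable, hence left-separated piece, so the process can exhaust $\omega_1$, yet $\omega_1$ is not left-separated (a pressing-down argument turns any well-ordering with closed initial segments into an uncountable descending chain). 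Already for two pieces the naive lexicographic reassembly fails unless the closed piece is placed first, so the point-countable base would have to enter this step in an essential way that you do not supply --- and supplying it is exactly where the real work lies: the known route to $(1)\Rightarrow(3)$ goes through Fleissner's characterization (Theorem \ref{thm:fleissner}), i.e.\ the stationarity of $\{N\in{[X]}^{<\aleph_1}:\overline{N}\setminus N\neq\emptyset\}$, together with pressing-down along the point-countable base in the spirit of Lemmas \ref{lem:elementary1} and \ref{lem:mainpdl}, rather than through a crowded core and a pointwise recursion. As it stands, neither the reduction nor the main construction is established.
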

It turns out that the negation of (1) of Theorem \ref{omegaplusone} is one of the standard smallness requirements on a space, which in the class of metrizable spaces, is equivalent to $\sigma$-discreteness.
Thus we have the following definition.
\begin{Def} \label{def:leftseparated}
 A topological space $\pr{X}{\TT}$ is said to be \emph{left-separated} if there exists a well-ordering $\wor$ of $X$ so that for each $x \in X$, $\{y \in X: y \: \wor \: x\}$ is a closed set.
\end{Def}
The proof of the implication from (2) to (1) in Theorem \ref{omegaplusone} has some information of interest to us here.
To see this assume that (1) fails and fix a well-ordering $\wor$ on $X$ with all initial segments closed.
So for every $y \in X$ we can fix a closed neighborhood ${U}_{y}$ of $y$ which is disjoint from $\{x \in X: x \: \wor \: y\}.$
Define $c:{[X]}^{2} \rightarrow 2$ by letting $c(x, y) = 0$ iff $y \in {U}_{x}$ for all pairs $x, y \in X$ satisfying $x \: \wor \: y$.
It is easily checked that subsets $Y$ of $X$ for which $c$ is constant on ${[Y]}^{2}$ must be discrete.
So in particular $X \not\rightarrow {\left( \omega +1 \right)}^{2}_{2}$, and therefore $X \not\rightarrow {(\Q)}^{2}_{2}$.
In \cite{GS}, Gerlits and Szentmikl{\' o}ssy have given an interesting variation of left separation which is equivalent to it in the class of spaces with a point countable base.
It is condition (1) of the following Corollary.
\begin{Cor} \label{cor:GS}
 The following are equivalent for every regular space $X$ with a point countable base:
 \begin{enumerate}
 \item
  there is a neighborhood assignment ${U}_{x} (x \in X)$ such that for all infinite $Y \subseteq X$ there is $y \in Y$ such that $\{x \in Y: y \notin {U}_{x}\}$ is infinite;
 \item
  there is a well-ordering of $X$ with all initial segments closed;
 \item
  $X \not\rightarrow {\left( \omega+1 \right)}^{2}_{3}.$
 \end{enumerate}
\end{Cor}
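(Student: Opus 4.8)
The plan is to break the cyclic equivalence into two parts that are essentially independent: the equivalence of (1) and (2) is a ZFC statement about left-separation with no Ramsey-theoretic content and is precisely the theorem of Gerlits and Szentmikl{\'o}ssy quoted in the discussion above, while the equivalence of (2) and (3) is a short deduction from Theorem \ref{omegaplusone}. The regularity and point-countable base hypotheses will enter the argument only through these two ingredients.

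For (1)$\Leftrightarrow$(2): the implication (2)$\Rightarrow$(1) is direct. Given a well-ordering $\wor$ of $X$ whose initial segments $\{y \in X : y \: \wor \: x\}$ are all closed, I would put $U_x = X \setminus \{y \in X : y \: \wor \: x\}$, an open neighborhood of $x$; then for an arbitrary infinite $Y \subseteq X$ let $y$ be the $\wor$-least element of $Y$, so that every $x \in Y \setminus \{y\}$ satisfies $y \: \wor \: x$ and hence $y \notin U_x$, whence $\{x \in Y : y \notin U_x\} = Y \setminus \{y\}$ is infinite. For the converse (1)$\Rightarrow$(2) I would invoke the theorem of Gerlits and Szentmikl{\'o}ssy from \cite{GS}: for a regular space with a point-countable base, a neighborhood assignment as in (1) yields a well-ordering of $X$ with all initial segments closed. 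This is the one place where the point-countable base is genuinely needed for this direction, since in general (1) is strictly weaker than left-separation.

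For (2)$\Leftrightarrow$(3): I would first note the trivial monotonicity $X \not\rightarrow {(\omega+1)}^{2}_{2} \Rightarrow X \not\rightarrow {(\omega+1)}^{2}_{3}$, which holds because a $2$-coloring witnessing the former is, with an unused third color, a $3$-coloring witnessing the latter. Reading the equivalence of conditions (1) and (2) of Theorem \ref{omegaplusone} contrapositively shows that condition (2) of the corollary is equivalent to $X \not\rightarrow {(\omega+1)}^{2}_{2}$, and the monotonicity then gives (2)$\Rightarrow$(3). Conversely, (3) says $X \not\rightarrow {(\omega+1)}^{2}_{3}$, which implies the failure of condition (3) of Theorem \ref{omegaplusone} (via the instance $k = 2$, $l = 3$); by that theorem the failure of its condition (1) follows as well, and that is exactly condition (2) of the corollary, closing the cycle. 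I do not anticipate a real obstacle here: the only substantive inputs are the cited Gerlits--Szentmikl{\'o}ssy equivalence and Theorem \ref{omegaplusone}, and the only point requiring care is the bookkeeping in (2)$\Leftrightarrow$(3), where one must use the monotonicity in one direction and the appropriate one of the three equivalent conditions of Theorem \ref{omegaplusone} in the other, given that condition (3) of the corollary involves three colors while Theorem \ref{omegaplusone}(2) involves two.
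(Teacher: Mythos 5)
Your handling of (2) $\Rightarrow$ (1) and of the equivalence (2) $\Leftrightarrow$ (3) coincides with the paper's: the latter is exactly the appeal to Theorem \ref{omegaplusone} together with the trivial monotonicity remark (the paper compresses this into one sentence), and the former is the same neighbourhood assignment obtained by taking $U_x$ disjoint from the initial segment below $x$. The problem is the remaining direction out of (1). The paper never invokes \cite{GS} for any implication: the sentence preceding the corollary credits Gerlits and Szentmikl\'ossy only with isolating condition (1) as a variation of left separation, and the equivalence of (1) with left separation for regular spaces with a point-countable base is precisely what the corollary is establishing. So quoting ``the theorem of Gerlits--Szentmikl\'ossy'' for (1) $\Rightarrow$ (2) either assumes the statement under proof or rests on a citation whose precise content you have not verified; in either case the one implication that requires a genuinely new argument is left unproved, and your route also never engages with why (3) is stated with three colours -- in your argument the ``3'' enters only through monotonicity, whereas in the paper it is forced by the construction.

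What the paper actually does is close the cycle by proving (1) $\Rightarrow$ (3) directly. Fix an arbitrary well-ordering $\wor$ of $X$ and, using regularity, assume the neighbourhoods $U_x$ witnessing (1) are closed. For $x \wor y$ colour the pair $0$ if $x \notin U_y$ and $y \notin U_x$; colour it $1$ if $x \in U_y$; colour it $2$ if $x \notin U_y$ but $y \in U_x$. An infinite $1$-homogeneous set is impossible: applying (1) inside it repeatedly produces an infinite $\wor$-decreasing sequence. A $0$-homogeneous set is discrete, since no point lies in the assigned neighbourhood of any other. A $2$-homogeneous set contains no copy of $\omega + 1$: the closed neighbourhood $U_y$ of the limit $y$ excludes all $\wor$-predecessors of $y$ in the set, so almost all terms of a sequence converging to $y$ lie $\wor$-above $y$; the $\wor$-least such term $s$ then has the closed set $U_s$ containing the tail of the sequence and hence $y$, contradicting $y \notin U_s$. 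Thus no monochromatic copy of $\omega+1$ exists, which is (3), and (3) $\Rightarrow$ (2) follows from Theorem \ref{omegaplusone} as you argued. To repair your proposal you must either supply an argument of this kind or verify that \cite{GS} literally proves (1) $\Rightarrow$ (2) for this class of spaces; as written, that step is a gap.
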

\begin{proof}
 The equivalence of (2) and (3) is by Theorem \ref{omegaplusone}.
 It is clear that (2) implies (1) using a neighborhood assignment such that ${U}_{y} \cap \{x \in X: x \: \wor \: y\} = \emptyset$ for all $y \in X,$ where $\wor$ is a well-ordering on $X$ with all initial segments closed.
 To show that (1) implies (3), consider the coloring $c:{[X]}^{2} \rightarrow 3$ defined as follows, where $\wor$ is a fixed well-ordering of $X$ and where we assume that the neighborhood assignment ${U}_{x} (x \in X)$ witnessing (1) consists of closed neighborhoods.
 For $x \: \wor \: y,$ set $c(x,y)=0$ if $x \notin {U}_{y}$ and $y \notin {U}_{x};$ set $c(x,y)=1$ if $x \in {U}_{y};$ finally, set $c(x,y)=2$ if $x \notin {U}_{y}$ but $y \in {U}_{x}.$
 Note that if $Y \subseteq X$ is such that $c''{[Y]}^{2} = \{1\}$, then $Y$ must be finite or else we would contradict (1).
 Note also that any $Y \subseteq X$ such that $c''{[Y]}^{2} = \{0\}$ or $c''{[Y]}^{2} = \{2\}$ must be discrete.
 So the coloring $c$ witnesses (3).
\end{proof}
We have already noted that one source of inspiration for this paper comes from Todorcevic's solution, through large cardinals, to a problem of Haydon.
A space $X$ is called \emph{universally meager} if every continuous function from a Baire space into $X$ must be constant on some non-meager subset of the Baire space.
We recall that the dual notion of a \emph{universally null} set is a well-studied notion, and especially its strengthening, the notion of a \emph{strong measure zero} set due to E.\@ Borel~\cite{borel}.
Recall that Borel~\cite{borel} conjectured that his notion coincides with the countability requirement for sets of reals, a conjecture which was proved to be consistent by Laver~\cite{laver} much later.
Thus, since the notion of universally meager is a strengthening of the direct dual of the notion of universally null, following the analogy, it is natural to conjecture that all universally meager sets of reals must be countable.
In fact, motivated by a problem about generic continuity and generic differentiability of functions on general Banach spaces, Haydon~\cite{haydon90} asked whether it is the case that a metrizable space is universally meager if and only if it is $\sigma$-scattered.
In \cite{genericcontinuity}, Todorcevic gave a positive answer to Haydon's question by showing that the existence of an uncountable strongly compact cardinal implies that if $X$ is any regular space with a point-countable base, then $X$ is universally meager if and only if it is left-separated.
All of the above mentioned results suggested the following project.
\begin{gp} \label{gp:generalproblem}
 Discover the optimal class of regular topological spaces in which the $2$-dimensional Ramsey degree of $\Q$ is at most $2$, and more generally, the $k$-dimensional Ramsey degree of $\Q$ is at most $k!(k-1)!$.
\end{gp}
 In this paper, we will address the general problem in dimension $2$ for all regular spaces with point-countable bases.
 Our main theorem is the following.
\begin{Theorem} \label{thm:general}
 Assume either that there is a proper class of Woodin cardinals or one uncountable strongly compact cardinal.
 If $X$ is any regular space that is not left-separated and has a point countable base, then the $2$-dimensional Ramsey degree of $\Q$ within $X$ is at most $2$. 
\end{Theorem}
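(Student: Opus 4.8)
The plan is to reduce the theorem, via Definition~\ref{def:degree}, to the following assertion: for each fixed $l \geq 1$ and each coloring $c \colon {[X]}^{2} \to l$ there is $Y \subseteq X$ homeomorphic to $\Q$ with $\lc c''{[Y]}^{2} \rc \leq 2$. Following the outline sketched in the introduction, the argument splits into a \emph{descent} step, where the large cardinal hypothesis is used to replace $(X,c)$ by a Baire space $Z$ carrying a continuous nowhere constant map $f \colon Z \to X$ together with a tame pull-back coloring, and a \emph{combinatorial} step carried out inside $Z$ by a Banach--Mazur game. I expect the descent step to be the serious one.

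For the descent step I would start from the characterization proved in \cite{genericcontinuity} under either of the present large cardinal hypotheses: a regular space with a point-countable base is universally meager if and only if it is left-separated. Since $X$ is not left-separated it is not universally meager, so there are a Baire space $Z$ and a continuous $f \colon Z \to X$ that is constant on no non-meager subset of $Z$; as every nonempty open subset of a Baire space is non-meager, $f$ is constant on no nonempty open set, hence every fibre of $f$ has empty interior and, being closed in the regular space $X$, is nowhere dense, so the fibres of $f$ are meager. The extra point, and this is where the internal structure of the proof of the universal-meagerness characterization matters, is that the Baire space $Z$ it produces is built from a forcing-type poset --- in the Woodin case, through the ideas behind the stationary tower of \cite{woodin88} --- whose topology is generated by sets from the ground model; consequently, running that construction with the coloring $c$ among the input data also makes the induced coloring $c_f \colon {[Z]}^{2} \to l+1$ (where $c_f(x,y) = c(f(x),f(y))$ if $f(x) \neq f(y)$ and $c_f(x,y) = l$ otherwise) Baire measurable. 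The upshot of this step is a regular Baire space $Z$, a continuous $f \colon Z \to X$ with meager fibres, and a Baire measurable $(l+1)$-coloring $c_f$ of the pairs from $Z$.

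For the combinatorial step I would work entirely inside $Z$ and construct, by a recursion along a tree of nonempty open subsets of $Z$ indexed by a countable dense-in-itself poset and steered by Banach--Mazur games, a countable set $W \subseteq Z$ satisfying: (i) $W$ is dense in itself; (ii) $f \restrict W$ is injective; and (iii) $c_f$ takes at most two values on ${[W]}^{2}$. Requirement (ii) can be maintained throughout the recursion precisely because the fibres of $f$ are meager, so at each stage the relevant open pieces can be kept off a single fibre; and once (ii) holds, the color $l$ never occurs on ${[W]}^{2}$, so the two values in (iii) are genuine colors of $c$. Requirement (iii) is the Ramsey-theoretic core: it is the homeomorphic-copy, Baire-category analog of Galvin's theorem that an arbitrary finite coloring of the pairs of $\Q$ reduces to two colors on an order copy of $\Q$, and it would be proved by the usual alternation-of-players device, where at each stage a player either commits to a color or passes to a comeager sub-box, using the Baire measurability of $c_f$. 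Finally, once $W$ is built: $f$ continuous together with $W$ dense in itself and $f \restrict W$ injective forces $Y := f[W]$ to be a countable dense-in-itself subspace of $X$; since $X$ has a point-countable base so does $Y$, and a countable (hence separable) regular space with a point-countable base has a countable base and is therefore metrizable, so $Y$ is homeomorphic to $\Q$ by Sierpi\'{n}ski's characterization. By (iii) and injectivity, $c''{[Y]}^{2}$ has at most two elements, which is what we need.

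\textbf{Main obstacle.} The genuinely new and hardest ingredient is the descent step: extracting from ``$X$ is not left-separated'' a Baire space $Z$, a continuous nowhere constant $f \colon Z \to X$, \emph{and} Baire measurability of the induced $(l+1)$-coloring, all at once. Producing a nowhere constant map into $X$ is delivered by the universal-meagerness machinery of \cite{genericcontinuity}, but one must push the stationary-tower (or strongly-compact reflection) construction so that it simultaneously absorbs the coloring $c$ as a sufficiently absolute object while retaining enough of the point-countability of the base to keep $Y$ metrizable at the end; the point-countable base hypothesis is essential here and is presumably optimal. A secondary, purely combinatorial, difficulty is that requirement (iii) does not follow formally from Galvin's theorem for $\Q$ --- Baumgartner's coloring \cite{baumtop} shows that the order version and the topological version of the problem genuinely diverge --- so the two-color reduction for Baire measurable colorings of pairs of a Baire space must be proved directly and interleaved with requirements (i) and (ii) in a single Banach--Mazur construction.
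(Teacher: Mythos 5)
Your outline reproduces the heuristic road map sketched in the paper's introduction, but the crucial step is asserted rather than proved, and it is precisely the step that the paper itself does not (and cannot, in this literal form) take. From the characterization ``universally meager $\Leftrightarrow$ left-separated'' of \cite{genericcontinuity} you do get, for a non-left-separated $X$, a Baire space $Z$ and a continuous $f\colon Z\to X$ that is not constant on any non-meager set. But your descent step needs much more: that the construction can be rerun ``with the coloring $c$ among the input data'' so that the induced coloring $c_f$ on ${[Z]}^{2}$ is Baire measurable. Nothing in the cited result gives this, and you offer no argument for it; an arbitrary $c\colon{[X]}^{2}\to l$ is just a set of pairs, and there is no reason the pullback of its fibers along $f\times f$ should have the Baire property in $Z\times Z$. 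This is exactly why the actual proof never produces such a pair $(Z,f)$ at all: it works directly inside the countable stationary tower ${\Q}_{<\delta}$, where stationarity plays the role of non-meagerness, the choice function $F(N)\in\overline{N}\setminus N$ (available on a stationary set by Fleissner's theorem) plays the role of $f$, Lemma \ref{lem:nowhere} is the ``nowhere constant'' statement, and --- in place of Baire measurability, which is simply unavailable --- the two-sided exhaustion argument of Lemma \ref{lem:existssaturated} produces a fixed pair of colors $\pr{i}{j}$ and the notion of an $\pr{i}{j}$-saturated pair of conditions. The large cardinal enters only through Theorem \ref{thm:banachmazur} (Empty has no winning strategy in $\Game(\delta)$), which drives Lemma \ref{lem:mainlemma}.

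Your combinatorial step inherits the same problem: the ``commit to a color or pass to a comeager sub-box'' device presupposes the Baire measurability you have not established, and even granting it, a one-color commitment scheme does not by itself produce the asymmetric two-color pattern that must track the interplay between the construction order and the topology (this is the content of the $\pr{i}{j}$-saturation bookkeeping and of clauses (6)--(7) in the proof of Theorem \ref{thm:main}; the paper notes that under genuine Baire measurability plus Kuratowski--Ulam one would in fact get a single color, which shows your framework and the general theorem are not the same problem). Two smaller omissions: under the hypothesis of one strongly compact cardinal you must reflect the space to a subspace in ${V}_{\delta}$ (Lemma \ref{lem:reflection}), which your plan does not address; your final step recovering a copy of $\Q$ from a countable dense-in-itself subspace via the point-countable base is fine. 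So the proposal is not a proof: the descent to a Baire space with a Baire measurable pullback coloring is a missing idea, not a routine adaptation of \cite{genericcontinuity}, and the paper's replacement for it (saturation plus the stationary-tower game) is the real content of the theorem.
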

Note that Theorem \ref{thm:metricspaces} immediately follows from Theorem \ref{thm:general} because metrizable spaces have point countable bases, and they are left-separated if and only if they are $\sigma$-discrete.

We will be treating higher dimensions and regular spaces without point-countable bases in forthcoming papers.
\section{Notation} \label{sec:notation}
Our set-theoretic notation is standard.
If $\lambda$ is an infinite cardinal, then $H(\lambda)$ denotes the set of all sets that are hereditarily of cardinality $< \lambda$.
The notation $M \prec H(\lambda)$ means that $\pr{M}{\in}$ is an elementary submodel of the structure $\pr{H(\lambda)}{\in}$.
For any $A$, $\Pset(A)$ denotes the powerset of $A$ -- that is, $\Pset(A) = \{a: a \subseteq A\}$.
For any $A$ and $B$, ${A}^{B}$ is the collection of all functions from $B$ to $A$.
If $\delta$ is an ordinal, then ${A}^{< \delta} = {\bigcup}_{\gamma < \delta}{{A}^{\gamma}}$.
If $f$ is a function, then $\dom(f)$ denotes the domain of $f$, and if $X \subseteq \dom(f)$, then $f''X$ is the image of $X$ under $f$ -- that is, $f''X = \{f(x): x \in X\}$.   
\section{Some preliminaries} \label{sec:prelim}
Properties of stationary sets will be used extensively in the proof of the main result.
In this section, we will collect together important facts needed in Section \ref{sec:main}.
Most of this material is standard.
We will need to deal only with stationary subsets of ${\[A\]}^{< {\aleph}_{1}}$, for various sets $A$.
Other more general notions of stationarity have been considered in the literature.
For example, one could talk about stationary subsets of $\Pset(A)$, for any non-empty set $A$.
The interested reader may consult \cite{paultower} or \cite{Je}.   
\begin{Def} \label{def:stat}
  Let $A$ be a non-empty set.
  $C \subseteq {\[A\]}^{< {\aleph}_{1}}$ is called a \emph{club in ${\[A\]}^{< {\aleph}_{1}}$} if the following two things hold:
  \begin{enumerate}
    \item
    for any $N \in {\[A\]}^{< {\aleph}_{1}}$, there exists $M \in C$ with $N \subseteq M$;
    \item
    for any $0 < \xi < {\aleph}_{1}$ and for any sequence $\seq{M}{\zeta}{<}{\xi}$ of elements of $C$, if $\forall \zeta' \leq \zeta < \xi\[{M}_{\zeta'} \subseteq {M}_{\zeta}\]$, then ${\bigcup}_{\zeta < \xi}{{M}_{\zeta}} \in C$.
  \end{enumerate}
  We say that $S \subseteq {\[A\]}^{< {\aleph}_{1}}$ is \emph{stationary in ${\[A\]}^{< {\aleph}_{1}}$} if $S \cap C \neq 0$, for every $C \subseteq {\[A\]}^{< {\aleph}_{1}}$ which is a club in ${\[A\]}^{< {\aleph}_{1}}$.
  And $S \subseteq {\[A\]}^{< {\aleph}_{1}}$ is said to be \emph{non-stationary in ${\[A\]}^{< {\aleph}_{1}}$} if it is not stationary in ${\[A\]}^{< {\aleph}_{1}}$.
\end{Def}
One of the salient facts about the non-stationary subsets of ${\[A\]}^{< {\aleph}_{1}}$ is that they form a normal $\sigma$-ideal.
\begin{Theorem}[Jech~\cite{Je}]\label{thm:normality}
 Let $A$ be a non-empty set.
 If $\F$ is any countable family of non-stationary subsets of ${\[A\]}^{< {\aleph}_{1}}$, then $\bigcup \F$ is also a non-stationary subset of ${\[A\]}^{< {\aleph}_{1}}$.
 If $S$ is a stationary subset of ${\[A\]}^{< {\aleph}_{1}}$, and $F$ is a function such that $\dom(F) = S$ and $\forall M \in S\[F(M) \in M\]$, then there exists an $m$ so that $\{M \in S: F(M) = m\}$ is a stationary subset of ${\[A\]}^{< {\aleph}_{1}}$.
\end{Theorem}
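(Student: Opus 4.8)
The plan is to deduce both assertions from a single key lemma: \emph{the intersection of countably many clubs in ${\[A\]}^{< {\aleph}_{1}}$ is again a club in ${\[A\]}^{< {\aleph}_{1}}$}. Granting this, the first assertion is immediate: given non-stationary $\seq{S}{n}{<}{\omega}$, I would fix for each $n$ a club $C_n$ with $S_n \cap C_n = 0$, and then $\bigcap_{n<\omega} C_n$ is a club disjoint from $\bigcup_{n<\omega} S_n$, so $\bigcup_{n<\omega}S_n$ is non-stationary.

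To prove the key lemma for clubs $\seq{C}{n}{<}{\omega}$, I would first observe that closure (clause (2) of Definition \ref{def:stat}) need only be checked for $\xi$ of cofinality $\omega$: for successor $\xi$ the union of an increasing chain equals its last term, and every countable limit ordinal has cofinality $\omega$, so one passes to a cofinal $\omega$-subchain, whose union agrees with that of the whole chain. Now if $\seq{M}{k}{<}{\omega}$ is an increasing $\omega$-chain inside $\bigcap_n C_n$, then for each fixed $n$ it is an increasing $\omega$-chain inside $C_n$, so its union lies in $C_n$; hence in $\bigcap_n C_n$. For unboundedness (clause (1)), given $N \in {\[A\]}^{< {\aleph}_{1}}$, I would fix a bookkeeping map $\nu:\omega\to\omega$ taking each value infinitely often and recursively build an increasing chain of countable sets $M_0 = N \subseteq M_1 \subseteq \cdots$ with $M_{k+1} \in C_{\nu(k)}$, using clause (1) for $C_{\nu(k)}$ at stage $k$. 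Then $M = \bigcup_{k<\omega} M_k$ is countable and contains $N$, and for each $n$ the subchain $\langle M_{k+1}: \nu(k)=n\rangle$ is a cofinal increasing $\omega$-subchain lying in $C_n$ with union $M$, so $M \in C_n$ by clause (2); thus $M \in \bigcap_n C_n$, proving the lemma.

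For the pressing-down assertion I would argue by contradiction. Assume $S$ is stationary, $F$ is regressive on $S$, and $S_m = \{M\in S: F(M)=m\}$ is non-stationary for every $m \in A$; fix clubs $C_m$ with $S_m \cap C_m = 0$, and form the diagonal intersection $C = \{M \in {\[A\]}^{< {\aleph}_{1}} : \forall m \in M\ (M \in C_m)\}$. I would check $C$ is a club by the same method as in the lemma. Closure under increasing $\omega$-chains: if $M = \bigcup_k M_k$ and $m \in M$, then $m \in M_{k_0}$ for some $k_0$, so the tail chain $\langle M_k : k \ge k_0\rangle$ lies in $C_m$ and has union $M$, whence $M \in C_m$; as $m \in M$ was arbitrary, $M \in C$. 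Unboundedness: starting from $N$, build $N_0 = N \subseteq N_1 \subseteq \cdots$ with $N_{k+1} \in \bigcap_{m \in N_k} C_m$, which is legitimate since $N_k$ is countable and, by the key lemma, a countable intersection of clubs is a club. The union $M = \bigcup_k N_k$ then lies in $C$ by the same tail-chain argument. Finally, since $S$ is stationary pick $M \in S \cap C$; then $m := F(M) \in M$ by regressivity, so $M \in C_m$ because $M \in C$, while $M \in S_m$ because $F(M)=m$, contradicting $S_m \cap C_m = 0$.

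The genuinely delicate points — and the main obstacles — are the two recursive constructions: the $\nu$-bookkeeping chain in the key lemma, whose cofinal $\omega$-subchains must be arranged to land in every $C_n$, and the chain closing off under $\bigcap_{m\in N_k}C_m$ for the diagonal intersection, which is precisely where $\sigma$-completeness of the non-stationary ideal feeds into its normality. Everything else (keeping the various unions countable, and the reduction of clause (2) to the $\omega$ case) is routine.
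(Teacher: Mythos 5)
Your proof is correct: the paper gives no argument for this statement (it simply cites Jech), and your route --- first showing that countable intersections of clubs in ${\[A\]}^{< {\aleph}_{1}}$ are clubs, then forming the diagonal intersection $\{M : \forall m \in M \ (M \in {C}_{m})\}$ with closure checked by the tail-of-chain argument and unboundedness by the bookkeeping/closing-off recursions --- is exactly the standard proof of $\sigma$-completeness and normality of the nonstationary ideal on ${\[A\]}^{< {\aleph}_{1}}$ found in that reference. The delicate points you flag (cofinal $\omega$-subchains landing in each ${C}_{n}$, closing off under $\bigcap_{m \in {N}_{k}}{{C}_{m}}$, and keeping all unions countable) are handled correctly, so there is nothing to repair.
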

The last statement of Theorem \ref{thm:normality} is usually called \emph{the pressing down lemma}.
The following theorem is a well-known fact about clubs and stationary sets.
It governs the behavior of clubs and stationary sets under projections and pullbacks. 
The reader may refer to Kanamori~\cite{kanamori} or to Jech~\cite{Je} for a proof.
This theorem below is true even when ${\aleph}_{1}$ is replaced with an arbitrary regular uncountable cardinal.
It is also true for the more general notion of club and stationary set in $\Pset(X)$.
The proof of a version that is applicable to the more general notion of club and stationary set may be found in Larson~\cite{paultower}.
\begin{Theorem} \label{thm:statprojection}
  Let $X$ and $Y$ be non-empty sets with $X \subseteq Y$.
  Then the following hold:
  \begin{enumerate}
    \item
     if $C \subseteq {\[X\]}^{< {\aleph}_{1}}$ is a club in ${\[X\]}^{< {\aleph}_{1}}$, then 
     \begin{align*}
      {C}^{\uparrow Y} = \left\{ M \in {\[Y\]}^{< {\aleph}_{1}}: M \cap X \in C \right\}
     \end{align*}
     is a club in ${\[Y\]}^{< {\aleph}_{1}}$;
    \item
    if $S \subseteq {\[Y\]}^{< {\aleph}_{1}}$ is stationary in ${\[Y\]}^{< {\aleph}_{1}}$, then
    \begin{align*}
      {S}_{\downarrow X} = \left\{ M \cap X: M \in S \right\} 
    \end{align*}
    is stationary in ${\[X\]}^{< {\aleph}_{1}}$;
    \item
    if $C \subseteq {\[Y\]}^{< {\aleph}_{1}}$ is a club in ${\[Y\]}^{< {\aleph}_{1}}$, then
    \begin{align*}
      {C}_{\downarrow X} = \left\{ M \cap X: M \in C \right\} \subseteq {\[X\]}^{< {\aleph}_{1}}
    \end{align*}
    and ${C}_{\downarrow X}$ contains a club in ${\[X\]}^{< {\aleph}_{1}}$;
    \item
    if $S \subseteq {\[X\]}^{< {\aleph}_{1}}$ is stationary in ${\[X\]}^{< {\aleph}_{1}}$, then
    \begin{align*}
     {S}^{\uparrow Y} = \left\{ M \in {\[Y\]}^{< {\aleph}_{1}}: M \cap X \in S \right\} 
    \end{align*}
    is stationary in ${\[Y\]}^{< {\aleph}_{1}}$.
  \end{enumerate}
\end{Theorem}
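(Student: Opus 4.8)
The plan is to prove part (1) by a direct closure argument, deduce part (2) from it, prove part (3) by a countable elementary submodel argument, and deduce part (4) from part (3). All four are standard (see Kanamori~\cite{kanamori} or Jech~\cite{Je}), and nothing beyond elementarity and countable closure of the non-stationary ideal is needed.

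\textbf{Part (1).} I would check directly that $C^{\uparrow Y}$ is closed and cofinal in $[Y]^{<{\aleph}_{1}}$. For cofinality, given $N \in [Y]^{<{\aleph}_{1}}$ build recursively an increasing chain $N = N_0 \subseteq N_1 \subseteq \cdots$ of countable subsets of $Y$: having $N_n$, use cofinality of $C$ in $[X]^{<{\aleph}_{1}}$ to pick $P_n \in C$ with $N_n \cap X \subseteq P_n$, and set $N_{n+1} = N_n \cup P_n$. Then $M = \bigcup_n N_n$ is a countable subset of $Y$ extending $N$, and $M \cap X = \bigcup_n P_n$, which lies in $C$ because the $P_n$ form an increasing $\omega$-chain in the closed set $C$; hence $M \in C^{\uparrow Y}$. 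For closure, if $\langle M_\zeta : \zeta < \xi \rangle$ with $\xi < {\aleph}_{1}$ is an increasing chain from $C^{\uparrow Y}$, then $\bigl(\bigcup_{\zeta < \xi} M_\zeta\bigr) \cap X = \bigcup_{\zeta < \xi}(M_\zeta \cap X)$ is an increasing union of members of $C$, so it lies in $C$ by closure of $C$, and thus $\bigcup_{\zeta<\xi} M_\zeta \in C^{\uparrow Y}$. Part (2) is then immediate: given a club $C \subseteq [X]^{<{\aleph}_{1}}$, part (1) makes $C^{\uparrow Y}$ a club in $[Y]^{<{\aleph}_{1}}$, so the stationary set $S$ meets it in some $M$, and then $M \cap X \in S_{\downarrow X} \cap C$; since $C$ was arbitrary, $S_{\downarrow X}$ is stationary.

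\textbf{Parts (3) and (4).} For part (3) I would fix a regular $\lambda$ with $X, Y, C \in H(\lambda)$ and consider $E = \{\, N \cap X : N \prec H(\lambda),\ N \text{ countable},\ \{X, Y, C\} \subseteq N \,\}$. Routine elementary submodel bookkeeping shows $E$ is a club in $[X]^{<{\aleph}_{1}}$: it is cofinal because any countable subset of $X$ sits inside some countable $N \prec H(\lambda)$ that also contains $X, Y, C$, and it is closed because a countable increasing union of countable elementary submodels is again a countable elementary submodel. The substantive point is $E \subseteq C_{\downarrow X}$, for which it suffices to show $N \cap Y \in C$ for every such $N$ (then $N \cap X = (N \cap Y) \cap X \in C_{\downarrow X}$ since $X \subseteq Y$). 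To get $N \cap Y \in C$, enumerate $N \cap Y = \{y_n : n < \omega\}$ and, using that $N$ reflects the true $H(\lambda)$-statement ``$C$ is cofinal in $[Y]^{<{\aleph}_{1}}$'', recursively choose an increasing chain $M_0 \subseteq M_1 \subseteq \cdots$ with $M_n \in C \cap N$ and $\{y_0, \dots, y_n\} \cup M_{n-1} \subseteq M_n$ (reading $M_{-1} = \emptyset$). Each $M_n$ is a countable element of $N$, hence $M_n \subseteq N$, and as $M_n \subseteq Y$ we get $M_n \subseteq N \cap Y$; since $y_n \in M_n$ for all $n$, this yields $N \cap Y = \bigcup_n M_n$, which lies in $C$ because $C$ is closed. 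Part (4) then follows: given a club $D \subseteq [Y]^{<{\aleph}_{1}}$, part (3) provides a club $E' \subseteq D_{\downarrow X}$ in $[X]^{<{\aleph}_{1}}$, the stationary set $S$ meets $E'$ in some $P$, and choosing $M \in D$ with $M \cap X = P$ puts $M$ in $S^{\uparrow Y} \cap D$; hence $S^{\uparrow Y}$ is stationary.

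\textbf{Main obstacle.} The only step with genuine content is the lemma inside part (3): $N \cap Y \in C$ whenever $N$ is a countable elementary submodel of a sufficiently large $H(\lambda)$ having $C$ (hence $Y$) as a member. The care required is to run the recursion so that elementarity keeps each $M_n$ inside $N$ while simultaneously exhausting the elements $y_n$ of $N \cap Y$, so that the resulting increasing $\omega$-chain has union exactly $N \cap Y$ and closure of $C$ applies. (One can avoid $H(\lambda)$ altogether by replacing $C$ with the club of closure points of a single finitary $f \colon Y^{<\omega} \to Y$ and then iterating $P \mapsto \mathrm{cl}_f(P) \cap X$ on countable $P \subseteq X$; this is equally elementary but needs the functional characterization of clubs as a preliminary.)
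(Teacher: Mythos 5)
Your parts (1), (2) and (4) are correct, and so is the genuinely substantive piece of your part (3): the argument that $N \cap Y \in C$ for every countable $N \prec H(\lambda)$ with $C, Y \in N$ is the standard one and is carried out correctly (the paper itself gives no proof of this theorem, referring instead to Jech, Kanamori and Larson, so correctness is the only issue here).

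The gap is in your claim that $E = \{\, N \cap X : N \prec H(\lambda) \ \text{countable},\ \{X,Y,C\} \subseteq N \,\}$ is a club, specifically that it is \emph{closed}. Your justification --- ``a countable increasing union of countable elementary submodels is again a countable elementary submodel'' --- only covers increasing unions of traces that arise from an increasing chain of submodels. Closure of $E$ requires handling an arbitrary increasing sequence $\langle P_\zeta : \zeta < \xi \rangle$ of elements of $E$: each $P_\zeta$ is $N_\zeta \cap X$ for \emph{some} witness $N_\zeta$, but these witnesses need not form a chain or be comparable at all, so nothing you have said produces a single $N$ with $N \cap X = \bigcup_\zeta P_\zeta$. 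This is not a pedantic complaint: the whole reason statement (3) says ``contains a club'' rather than ``is a club'' is that trace/projection sets of clubs need not be closed, and $E$ is itself the trace on $X$ of the club of elementary submodels of $H(\lambda)$; with plain $\in$-elementarity it is not clear that $E$ is closed, and asserting it without proof essentially assumes an instance of what is being proved. The standard repair is cheap: fix a well-ordering $<_\lambda$ of $H(\lambda)$ and use countable $N \prec (H(\lambda), \in, <_\lambda)$, so Skolem hulls are available. Then for an increasing chain $P_\zeta = N_\zeta \cap X$ the hull $N = \mathrm{Hull}\bigl(\bigcup_\zeta P_\zeta \cup \{X,Y,C\}\bigr)$ satisfies $N \cap X = \bigcup_\zeta P_\zeta$: any element of $N$ is a Skolem term evaluated at finitely many parameters, which lie in a single $P_\zeta \cup \{X,Y,C\} \subseteq N_\zeta$; since $N_\zeta$ is elementary in the expanded structure, the value lies in $N_\zeta$, so if it belongs to $X$ it belongs to $N_\zeta \cap X = P_\zeta$. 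This modified trace set is provably a club, is contained in your $E$, and your lemma $N \cap Y \in C$ applies verbatim to these models, so the rest of your argument then goes through unchanged. (The alternative route via closure points of a function $f: {Y}^{<\omega} \rightarrow Y$, which you mention parenthetically, also works, but then the functional characterization of clubs must be proved rather than cited, since it is of the same depth as (3).)
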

Note the asymmetry between (1) and (3), and the symmetry between (2) and (4).
We will really only make use of (2) and (4).
The relevance of stationary sets to left-separation of topological spaces is taken up next.
\begin{Def} \label{def:closureBY}
  Let $\pr{X}{\TT}$ be a topological space.
  For any $A \subseteq X$, $\overline{A}$ will denote the closure of $A$.
  Given a base $\BB \subseteq \TT$ and a $Y \subseteq X$, ${\BB}_{Y}$ will denote $\left\{ U \in \BB: U \cap Y \neq 0 \right\}$.
\end{Def}
Theorem \ref{thm:fleissner} is a deep characterization of regular left-separated spaces having a point-countable base in terms of non-stationarity of the collection of all countable closed subsets of the space.
It first appears in Fleissner~\cite{fle}.
Indeed the theorem is also valid for ${T}_{1}$ spaces.
However, all of our spaces are assumed to be regular because we would like to be able to find subspaces homeomorphic to $\Q$ within them.  
\begin{Theorem}[see \cite{fle}]\label{thm:fleissner}
  If $\pr{X}{\TT}$ is a regular space which has a point-countable base, then $\pr{X}{\TT}$ is not left-separated if and only if $\left\{ N \in {\[X\]}^{< {\aleph}_{1}}: \overline{N} \setminus N \neq 0 \right\}$ is stationary in ${\[X\]}^{< {\aleph}_{1}}$.
\end{Theorem}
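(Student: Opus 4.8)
We prove the two implications in turn; throughout fix a point-countable base $\BB$ and write $S=\{N\in [X]^{<{\aleph}_1}:\overline{N}\setminus N\neq 0\}$ for the family of non-closed countable subsets of $X$. For the implication that left-separation makes $S$ non-stationary, let $\wor$ witness left-separation, so each initial segment $I_x=\{y: y\: \wor\: x\}$ is closed. For each $x$ I would fix $U_x\in\BB$ with $x\in U_x\subseteq X\setminus I_x$; then $x$ is the $\wor$-least element of $U_x$, so $x\mapsto U_x$ is injective, and for fixed $y$ the set $\{x: y\in U_x\}$ injects into the countable set $\{U\in\BB: y\in U\}$ and hence is countable. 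The point is that $C=\{N\in [X]^{<{\aleph}_1}:\forall x\in X\,(U_x\cap N\neq 0\rightarrow x\in N)\}$ is a club: closure under increasing countable unions is trivial, and cofinality holds because closing a countable set under the operation $N\mapsto\{x: U_x\cap N\neq 0\}$ keeps it countable by the previous remark. Every $N\in C$ is closed, since $x\in\overline{N}$ forces $U_x\cap N\neq 0$ and hence $x\in N$; so $C\cap S=0$ and $S$ is non-stationary. Contrapositively, stationarity of $S$ implies $X$ is not left-separated.

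For the converse, assume $S$ is non-stationary and fix a club $C_0$ with $C_0\cap S=0$, so every $N\in C_0$ is closed in $X$. Fix a sufficiently large regular $\chi$ and a structure $\mathfrak{A}$ expanding $\pr{H(\chi)}{\in}$ by $X,\TT,\BB,C_0$ and a well-ordering of $H(\chi)$; by a standard argument using that $C_0$ is cofinal and closed under increasing countable unions, $M\cap X\in C_0$ (hence is closed in $X$) for every countable $M\prec\mathfrak{A}$ containing $X,\TT,\BB,C_0$. The heart of the argument is to drop countability: for \emph{every} $M\prec\mathfrak{A}$ with $X,\TT,\BB,C_0\in M$, the set $M\cap X$ is closed in $X$. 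I would prove this by induction on $|M|$, the countable case being the preceding observation. For uncountable $M$, write $M=\bigcup_{\alpha<|M|}M_\alpha$ as a continuous increasing union of elementary submodels of $\mathfrak{A}$ of size $<|M|$ containing the parameters, so each $M_\alpha\cap X$ is closed by induction. If some $x$ lay in $\overline{M\cap X}\setminus M$, then $x\in X$ but $x\notin M_\alpha\cap X=\overline{M_\alpha\cap X}$, so for each $\alpha$ there would be $U_\alpha\in\BB$ with $x\in U_\alpha$ and $U_\alpha\cap M_\alpha\cap X=0$; since all the $U_\alpha$ lie in the \emph{countable} set $\{U\in\BB: x\in U\}$, one value $U^{\ast}$ is taken for $\alpha$ in a subset of $|M|$ of full size, hence cofinal in $|M|$, and then monotonicity gives $U^{\ast}\cap M_\alpha\cap X=0$ for all $\alpha$, so $U^{\ast}\cap M\cap X=0$, contradicting $x\in\overline{M\cap X}$. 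Point-countability of $\BB$ enters exactly in this pigeonhole; the only genuinely delicate point I anticipate is arranging the decomposition $M=\bigcup_\alpha M_\alpha$ cleanly, in particular when $|M|$ is singular.

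Granting this lemma, I would conclude by induction on $|X|$. First note that for $Y\subseteq X$ the analogue of $S$ remains non-stationary, because the trace $\{N\cap Y: N\in C_0\}$ consists of sets closed in $Y$ and contains a club in $[Y]^{<{\aleph}_1}$ by Theorem~\ref{thm:statprojection}. The base case $|X|\leq{\aleph}_0$ is immediate: a well-ordering of type $\leq\omega$ has only finite, hence closed, initial segments. For $|X|>{\aleph}_0$, build a continuous increasing chain $\seq{M}{\alpha}{<}{|X|}$ of elementary submodels of $\mathfrak{A}$ containing the parameters, with $|M_\alpha|<|X|$ and $X\subseteq\bigcup_\alpha M_\alpha$, and put $Y_\alpha=M_\alpha\cap X$; by the lemma each $Y_\alpha$ is closed, so $\seq{Y}{\alpha}{<}{|X|}$ is a continuous increasing filtration of $X$ by closed sets. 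Each of $Y_0$ and the increments $Z_\alpha=Y_{\alpha+1}\setminus Y_\alpha$ has size $<|X|$, so by the induction hypothesis carries a well-ordering whose initial segments are all closed in that subspace. Finally let $\wor$ order $X$ by placing $Y_0$ first and then the blocks $Z_\alpha$ in order of $\alpha$, using the given well-ordering inside each block: a $\wor$-initial segment then has the form $Y_\alpha\cup(F\cap Z_\alpha)$ for some $F$ closed in $X$, and since $Y_\alpha$ and $Y_{\alpha+1}$ are closed one checks $\overline{Y_\alpha\cup(F\cap Z_\alpha)}\subseteq Y_\alpha\cup(F\cap Y_{\alpha+1})=Y_\alpha\cup(F\cap Z_\alpha)$. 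Thus every initial segment of $\wor$ is closed, $X$ is left-separated, and the induction closes.
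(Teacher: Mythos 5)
The paper does not actually prove this statement; it is quoted from Fleissner's paper, so your argument can only be judged on its own terms. Your first implication is correct, and so is the overall architecture of the converse (filtration of $X$ by traces of elementary submodels, hereditary non-stationarity via Theorem \ref{thm:statprojection}, induction on $\lc X \rc$ with the block-concatenation of well-orders, whose closure computation you do correctly). The genuine gap is in the proof of your central lemma, that $M \cap X$ is closed for \emph{every} $M \prec \mathfrak{A}$ containing the parameters. Your induction on $\lc M \rc$ breaks exactly when $\lc M \rc$ has countable cofinality (the first instance being $\lc M \rc = {\aleph}_{\omega}$), and the difficulty is not, as you anticipate, ``arranging the decomposition cleanly'': the decomposition into a continuous chain of smaller submodels is routine. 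The problem is the pigeonhole step. Re-indexing by $\cf(\lc M \rc) = \omega$, the countably many values $U_{n} \in \{U \in \BB : x \in U\}$ may be pairwise distinct, and no fiber need be infinite, so no single $U^{\ast}$ disjoint from cofinally many $M_{n} \cap X$ is produced; indeed the existence of such a $U^{\ast}$ is exactly equivalent to the conclusion $x \notin \overline{M \cap X}$ you are trying to prove. More fundamentally, an increasing $\omega$-union of closed sets need not be closed, so ``each $M_{n} \cap X$ is closed'' is simply too weak an inductive hypothesis to push through at countable cofinality; some further use of elementarity and of the club $C_{0}$ is unavoidable there.

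The lemma itself is true, and can be proved without any induction on $\lc M \rc$, which also repairs your argument: given $x \in \overline{M \cap X}$, use point-countability to list $\{U \in \BB : x \in U\}$ as a countable family, pick for each such $U$ a witness ${y}_{U} \in U \cap M \cap X$, and let $N$ be the Skolem hull (in $\mathfrak{A}$, which carries a well-ordering) of these countably many witnesses together with $X, \TT, \BB, C_{0}$. Since the generators lie in $M$ and $M \prec \mathfrak{A}$, we get $N \subseteq M$, $N$ countable, $N \prec \mathfrak{A}$; by your countable case $N \cap X \in C_{0}$ is closed, and since every basic neighborhood of $x$ contains some $U$ with ${y}_{U} \in N \cap X$, we get $x \in \overline{N \cap X} = N \cap X \subseteq M$. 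With this substitution (and granting that ``regular'' includes $T_1$, which your countable base case silently uses when declaring finite sets closed), the rest of your proof goes through.
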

As mentioned in the introduction, metrizable spaces that are not $\sigma$-discrete are one class of examples of regular non-left-separated spaces with point-countable bases.
Another example is a special stationary Aronszajn line.
One of the benefits of a point-countable base is that any countable set which is sufficiently closed under definable operations must contain all the members of the base around any point in its closure.
This fact is proved in the next lemma, which will enable us to apply the pressing down lemma.   
\begin{Lemma} \label{lem:elementary1}
  Let $\pr{X}{\TT}$ be a topological space with a point-countable base $\BB \subseteq \TT$.
  Let $\chi$ be any uncountable regular cardinal and suppose that $M \prec H(\chi)$ with $\lc M \rc = {\aleph}_{0}$ and $\pr{X}{\TT}, \BB \in M$.
  If $x \in \overline{X \cap M} \setminus M$, then ${\BB}_{\{x\}} \subseteq M$.
\end{Lemma}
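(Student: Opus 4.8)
The plan is to reduce the lemma to a single elementarity observation: whenever $y \in X \cap M$, every member of the base containing $y$ already lies in $M$. Granting this, the conclusion is immediate from the hypothesis $x \in \overline{X \cap M}$.

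First I would prove the auxiliary claim that $\BB_{\{y\}} \subseteq M$ for every $y \in X \cap M$. Since $\BB, y \in M$, the set $\BB_{\{y\}} = \{U \in \BB : y \in U\}$ is definable in $H(\chi)$ from the parameters $\BB$ and $y$, hence $\BB_{\{y\}} \in M$ by elementarity; and it is nonempty because $\BB$ is a base. Point-countability of $\BB$ makes $\BB_{\{y\}}$ countable, so by elementarity $M$ contains a surjection $g$ from $\omega$ onto $\BB_{\{y\}}$. Because $\omega$ is definable we have $\omega \in M$ and, each natural number being definable, $\omega \subseteq M$; thus $g(n) \in M$ for all $n \in \omega$, and therefore $\BB_{\{y\}} = \{g(n) : n \in \omega\} \subseteq M$.

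Next I would derive the lemma itself. Fix $U \in \BB_{\{x\}}$, so $U \in \BB$ and $x \in U$; the goal is $U \in M$. Since $U$ is an open neighborhood of $x$ and $x \in \overline{X \cap M}$, there is $y \in U \cap X \cap M$. Then $y \in X \cap M$, so the auxiliary claim gives $\BB_{\{y\}} \subseteq M$; and $y \in U \in \BB$ gives $U \in \BB_{\{y\}}$. Hence $U \in M$, and as $U$ ranged over $\BB_{\{x\}}$ we conclude $\BB_{\{x\}} \subseteq M$.

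I do not anticipate a real obstacle here; the one point worth stating carefully is the step from ``$\BB_{\{y\}}$ is a countable element of $M$'' to ``$\BB_{\{y\}} \subseteq M$'' --- a countable member of $M$ need not be a subset of $M$ in general, but here $M$ sees an enumeration of it by $\omega$ and $\omega \subseteq M$. I would also remark that the hypothesis $x \notin M$ is not actually used: since $\overline{X \cap M} \subseteq \overline{X} = X$, any $x \in \overline{X \cap M} \cap M$ lies in $X \cap M$, and then the claim applied to $x$ itself already yields $\BB_{\{x\}} \subseteq M$; it is retained in the statement only because that is the form in which the lemma will be applied.
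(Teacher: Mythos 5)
Your proposal is correct and follows essentially the same route as the paper: pick $y \in U \cap X \cap M$ using $x \in \overline{X \cap M}$, note $\BB_{\{y\}} \in M$ is countable by point-countability, and conclude $\BB_{\{y\}} \subseteq M$, hence $U \in M$. The only difference is that you spell out the standard fact that a countable element of $M$ is a subset of $M$ (via an enumeration by $\omega$ in $M$), which the paper uses without comment, and your observation that $x \notin M$ is never needed is also accurate.
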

\begin{proof}
  Consider any $U \in {\BB}_{\{x\}}$.
  $U$ is an open set with $x \in U$, and so $U \cap X \cap M \neq 0$.
  Choose $y \in U \cap M$.
  Thus $\{y\} \in M$ and ${\BB}_{\{y\}} \in M$.
  Since $\BB$ is point-countable, ${\BB}_{\{y\}}$ is a countable set.
  Therefore ${\BB}_{\{y\}} \subseteq M$.
  As $U \in {\BB}_{\{y\}}$, $U \in M$.
  This shows ${\BB}_{\{x\}} \subseteq M$, as needed. 
\end{proof}
The countable stationary tower will be our main tool for proving Theorem~\ref{thm:general}.
Building on the groundbreaking work of Foreman, Magidor, and Shelah~\cite{fms1}, Woodin introduced the stationary tower in \cite{woodin88} and established a wide variety of results in set theory with it.
Larson~\cite{paultower} provides an excellent and accessible introduction to the stationary tower and its applications.
A more advanced reference is Woodin~\cite{woodin}.
Towers of ideals, including several variants of the stationary tower, and their associated generic elementary embeddings are studied in Foreman~\cite{foremanhandbook}. 
Kanamori~\cite{kanamori} provides an introduction to large cardinals. 
\begin{Def} \label{def:countabletower}
  Let $\delta$ be a strongly inaccessible cardinal.
  As usual, ${V}_{\delta}$ denotes $\{a: \rk(a) < \delta\}$.
  The \emph{countable stationary tower up to $\delta$}, denoted ${\Q}_{< \delta}$, is defined to be the collection of all $\pr{A}{S} \in {V}_{\delta}$ such that $A$ is a non-empty set and $S \subseteq {\[A\]}^{< {\aleph}_{1}}$ is stationary in ${\[A\]}^{< {\aleph}_{1}}$.
  Elements of ${\Q}_{< \delta}$ will sometimes be called \emph{conditions in ${\Q}_{< \delta}$}, or simply \emph{conditions}.

  An ordering on ${\Q}_{< \delta}$ is defined as follows.
  For $\pr{A}{S}, \pr{B}{T} \in {\Q}_{< \delta}$, define $\pr{B}{T} \leq \pr{A}{S}$ to mean that $B \supseteq A$ and $T \subseteq {S}^{\uparrow B}$.
  It is easily checked that $\leq$ is a partial order on ${\Q}_{< \delta}$.
  Observe also that for any $\pr{B}{T}, \pr{A}{S} \in {\Q}_{< \delta}$, $\pr{B}{T} \leq \pr{A}{S}$ if and only if $B \supseteq A$ and ${T}_{\downarrow A} \subseteq S$.
  
  If $p \in {\Q}_{< \delta}$ and $D \subseteq {\Q}_{< \delta}$, then $D$ is said to be \emph{dense below} $p$ if for each $\pr{A}{S} \leq p$, there exists $\pr{B}{T} \in D$ with $\pr{B}{T} \leq \pr{A}{S}$.
\end{Def}
Fix a strongly inaccessible cardinal $\delta > \omega$ for the remainder of this section.
The following lemma will be useful in conjunction with Lemma \ref{lem:elementary1} and the pressing down lemma.
\begin{Lemma} \label{lem:gooddense}
  Let $\pr{X}{\TT, \BB} \in {V}_{\delta}$ be a regular topological space where $\BB \subseteq \TT$ is a point-countable base.
  If $\pr{X}{\TT}$ is not left-separated, then
  \begin{align*}
   p = \left\langle X, \left\{ N \in {\[X\]}^{< {\aleph}_{1}}: \overline{N}\setminus N \neq 0 \right\} \right\rangle \in {\Q}_{< \delta}.
  \end{align*}
  Moreover, the collection of all $\pr{B}{T} \leq p$ with the property that there exists an uncountable regular cardinal $\chi$ such that $B = H(\chi)$ and
  \begin{align*}
   \forall M \in T\[\lc M\rc = {\aleph}_{0} \ \text{and} \ M \prec H(\chi) \ \text{and} \ \pr{X}{\TT}, \BB \in M \ \text{and} \ \overline{X \cap M} \setminus M \neq 0\]
  \end{align*}
  is dense below $p$.
\end{Lemma}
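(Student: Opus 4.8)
The plan is to get the first assertion at once from Fleissner's theorem, and then to establish the density clause by a routine ``stationary pullback plus club of countable elementary submodels'' argument. Write $S_0=\{N\in[X]^{<\aleph_1}:\overline{N}\setminus N\neq\emptyset\}$, so that $p=\pr{X}{S_0}$. Since the empty space is (vacuously) left-separated, the hypothesis forces $X\neq\emptyset$; and since $\pr{X}{\TT,\BB}\in V_\delta$ with $\delta$ strongly inaccessible, $\Pset(X)\in V_\delta$, whence $p\in V_\delta$. Theorem~\ref{thm:fleissner}, applicable because $\pr{X}{\TT}$ is regular with a point-countable base and is not left-separated, says exactly that $S_0$ is stationary in $[X]^{<\aleph_1}$; together with $X\neq\emptyset$ this gives $p\in\Q_{<\delta}$.

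For the density clause I would fix an arbitrary $\pr{A}{S}\leq p$ and unwind the definitions: $A\supseteq X$, $S$ is stationary in $[A]^{<\aleph_1}$, and $S\subseteq S_0^{\uparrow A}$, so every $N\in S$ has $N\cap X\in S_0$, i.e.\ $\overline{N\cap X}\setminus(N\cap X)\neq\emptyset$. Using strong inaccessibility of $\delta$, I would pick a regular uncountable cardinal $\chi<\delta$ with $\pr{X}{\TT},\BB,A,S\in H(\chi)$, set $B=H(\chi)$ (so $B\in V_\delta$ and $A\subseteq B$), and let
\begin{align*}
 T=\left\{M\in[B]^{<\aleph_1}: M\prec H(\chi)\ \text{and}\ \pr{X}{\TT},\BB,A,S\in M\ \text{and}\ M\cap A\in S\right\}.
\end{align*}
I claim $\pr{B}{T}$ works. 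Every countable elementary submodel of $H(\chi)$ is infinite, so $\lc M\rc=\aleph_0$ for all $M\in T$. If $M\in T$, then from $M\cap A\in S$ and $X\subseteq A$ we get $M\cap X=(M\cap A)\cap X$ with $\overline{M\cap X}\setminus(M\cap X)\neq\emptyset$; any witness $x$ lies in $\overline{M\cap X}\subseteq\overline{X}=X$ but not in $M\cap X$, hence $x\notin M$, so $x\in\overline{X\cap M}\setminus M$. Thus every $M\in T$ satisfies all the clauses displayed in the statement. Moreover $T\subseteq{S}^{\uparrow B}$ is immediate and $B\supseteq A$, so modulo stationarity of $T$ we obtain $\pr{B}{T}\in\Q_{<\delta}$ and $\pr{B}{T}\leq\pr{A}{S}\leq p$, with $\pr{B}{T}$ a member of the collection in question.

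The only remaining point is that $T$ is stationary in $[B]^{<\aleph_1}$, and this is where I would assemble the known facts. By Theorem~\ref{thm:statprojection}(4) applied to $A\subseteq B$, the set ${S}^{\uparrow B}=\{M\in[B]^{<\aleph_1}:M\cap A\in S\}$ is stationary in $[B]^{<\aleph_1}$. The set $C$ of all $M\in[B]^{<\aleph_1}$ with $M\prec H(\chi)$ and $\pr{X}{\TT},\BB,A,S\in M$ is a club in $[B]^{<\aleph_1}$: cofinality is the downward L\"owenheim--Skolem theorem, and closure under increasing unions of countable length holds because a union of a countable increasing chain of elementary submodels of $H(\chi)$ is again a countable elementary submodel and still contains the finitely many fixed parameters. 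Since $T={S}^{\uparrow B}\cap C$ is the intersection of a stationary set with a club, it is stationary; and $T\subseteq\Pset(B)$ with $B\in V_\delta$ gives $T\in V_\delta$. As $\pr{A}{S}\leq p$ was arbitrary, the collection is dense below $p$.

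I do not expect a genuine obstacle here: the content is carried by Theorems~\ref{thm:fleissner} and~\ref{thm:statprojection} and by the folklore fact that countable elementary submodels form a club. The only steps requiring attention are (i) deducing $\overline{X\cap M}\setminus M\neq\emptyset$ rather than merely $\overline{X\cap M}\setminus(X\cap M)\neq\emptyset$, which works because the closure is computed inside $X$ so no witness can lie in $M\setminus X$, and (ii) the bookkeeping that keeps $\chi<\delta$ so that $B=H(\chi)$ and $T$ stay inside $V_\delta$ --- which is exactly where the strong inaccessibility of $\delta$ is used.
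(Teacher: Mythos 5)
Your proposal is correct and follows essentially the same route as the paper: Fleissner's theorem gives the first assertion, and for density you take $B=H(\chi)$ for a suitable regular $\chi<\delta$ and intersect ${S}^{\uparrow B}$ with the club of countable elementary submodels containing the relevant parameters, exactly as in the paper's proof. The only difference is that you spell out the routine verifications (stationarity via Theorem~\ref{thm:statprojection}(4) and the step from $\overline{X\cap M}\setminus(X\cap M)\neq\emptyset$ to $\overline{X\cap M}\setminus M\neq\emptyset$) that the paper leaves implicit.
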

\begin{proof}
  The hypotheses together with Theorem \ref{thm:fleissner} imply that $X$ is a non-empty set, $p \in {V}_{\delta}$, and that $p$ is a condition in ${\Q}_{< \delta}$.
  For the second part, let $\pr{A}{S} \leq p$.
  Fix an uncountable regular cardinal $\chi$ with $\left\{A, \pr{X}{\TT}, \BB \right\} \subseteq H(\chi)$ and $H(\chi) \in {V}_{\delta}$.
  Let $B = H(\chi)$.
  Since $A \subseteq B$, $\pr{B}{{S}^{\uparrow B}} \leq \pr{A}{S}$.
  Now it is well-known that $C = \left\{M \in {\[B\]}^{< {\aleph}_{1}}: M \prec H(\chi) \ \text{and} \ \{ A, \pr{X}{\TT}, \BB \} \subseteq M \right\}$ is a club in ${\[B\]}^{< {\aleph}_{1}}$.
  Let $T = C \cap {S}^{\uparrow B}$.
  Then $\pr{B}{T} \leq \pr{B}{{S}^{\uparrow B}} \leq \pr{A}{S} \leq p$ and it is as required.
\end{proof}
Todorcevic~\cite{genericcontinuity} defines an ideal ${\I}_{{\omega}_{1}}(\delta)$ as follows.
Let us say that a set $T \subseteq {\[\delta\]}^{< {\aleph}_{1}}$ \emph{depends on a bounded set of coordinates in $\delta$}, if there exists a bounded subset $A \subseteq \delta$ with the property that for all $M, M' \in {\[\delta\]}^{< {\aleph}_{1}}$, if $M \cap A = M' \cap A$, then $M \in T$ if and only if $M' \in T$.
${\F}_{{\omega}_{1}}(\delta)$ denotes the collection of all $T \subseteq {\[\delta\]}^{< {\aleph}_{1}}$ that depend on a bounded set of coordinates in $\delta$.
For a bounded subset $A \subseteq \delta$ and a function $f: {A}^{< \omega} \rightarrow A$, ${C}_{f}$ denotes $\left\{M \in {\[\delta\]}^{< {\aleph}_{1}}: f''\left( {\left( M \cap A \right)}^{< \omega} \right) \subseteq M \right\}$.
${\I}_{{\omega}_{1}}(\delta)$ is the collection of all $T \subseteq {\[\delta\]}^{< {\aleph}_{1}}$ for which there exist a bounded $A \subseteq \delta$ and a function $f: {A}^{< \omega} \rightarrow A$ such that $T \cap {C}_{f} = \emptyset$.
Finally, ${\BB}_{{\omega}_{1}}(\delta) = {\F}_{{\omega}_{1}}(\delta) \setminus {\I}_{{\omega}_{1}}(\delta)$.
While we will not be working with any of these collections directly, it is worth noting that there is a natural one-to-one correspondence between the members of ${\Q}_{< \delta}$ and ${\BB}_{{\omega}_{1}}(\delta)$.

We now consider a version of the Banach-Mazur game played with conditions in ${\Q}_{< \delta}$.
It is also similar to the precipitous game (see \cite{Je}).
\begin{Def} \label{def:game1}
  Define a two-player game $\Game(\delta)$ as follows.
  Two players Empty and Non-Empty take turns playing conditions in ${\Q}_{< \delta}$, with Empty making the first move.
  When one of the players has played $\pr{{A}_{n}}{{S}_{n}} \in {\Q}_{< \delta}$, his opponent is required to play $\pr{{A}_{n + 1}}{{S}_{n + 1}} \leq \pr{{A}_{n}}{{S}_{n}}$.
  Thus each run of the game produces a sequence
  \begin{align*}
    \begin{tabular}{c| c c c c c c}
      $\textrm{Empty}$& $\pr{{A}_{0}}{{S}_{0}}$ &  & $\pr{{A}_{2}}{{S}_{2}}$& & $\dotsb$ & \\
      \hline
      $\textrm{Non-Empty}$& & $\pr{{A}_{1}}{{S}_{1}}$& &$\dotsb$ &
    \end{tabular}
  \end{align*}
  such that for each $n \in \omega$, $\pr{{A}_{2n}}{{S}_{2n}}$ has been played by Empty, $\pr{{A}_{2n+1}}{{S}_{2n+1}}$ has been played by Non-Empty and $\pr{{A}_{n+1}}{{S}_{n+1}} \leq \pr{{A}_{n}}{{S}_{n}}$.
  Non-Empty wins this particular run of $\Game(\delta)$ if and only if there exists a sequence $\seq{N}{l}{\in}{\omega}$ such that $\forall l \in \omega\[{N}_{l} \in {S}_{l}\]$ and $\forall k \leq l\[{N}_{k} = {N}_{l} \cap {A}_{k}\]$.
\end{Def}
The following important theorem tells us that if $\delta$ is a suitable large cardinal, then the Empty player does not have a winning strategy in $\Game(\delta)$.
It is essentially equivalent to the well-known fact that the generic ultrapower of the universe induced by ${\Q}_{< \delta}$ is closed under $\omega$-sequences in the generic extension by ${\Q}_{< \delta}$.
A version of this theorem for the collection ${\BB}_{{\omega}_{1}}(\delta)$ was proved by Todorcevic in \cite{genericcontinuity}.
In fact, Theorem \ref{thm:banachmazur} also follows from the proof of Lemma 2.3 from Todorcevic~\cite{genericcontinuity} via the correspondence between elements of ${\Q}_{< \delta}$ and ${\BB}_{{\omega}_{1}}(\delta)$ discussed earlier.
Alternatively, the proof of Lemma 2.5.6 from Larson~\cite{paultower} can be easily adapted to prove Theorem \ref{thm:banachmazur}. 
\begin{Theorem} \label{thm:banachmazur}
  If $\delta$ is a Woodin cardinal or an uncountable strongly compact cardinal, then Empty does not have a winning strategy in $\Game(\delta)$.
\end{Theorem}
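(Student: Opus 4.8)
The plan is to derive the statement from the well-known fact that, under the stated large-cardinal hypothesis, the generic ultrapower of the universe induced by ${\Q}_{< \delta}$ is closed under $\omega$-sequences in the generic extension. Concretely, I will use that (by the results on ${\Q}_{< \delta}$ in Larson~\cite{paultower} and Todorcevic~\cite{genericcontinuity}) if $\delta$ is Woodin or uncountable strongly compact and $G$ is $\V$-generic for ${\Q}_{< \delta}$, then the generic ultrapower $j \colon \V \to M \subseteq \VG$ is well defined and elementary, $M$ is transitive, ${}^{\omega}M \cap \VG \subseteq M$, $j(\omega) = \omega$, and $\pr{A}{S} \in G$ if and only if $j''A \in j(S)$ for every $\pr{A}{S} \in {\Q}_{< \delta}$. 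Suppose toward a contradiction that Empty has a winning strategy $\sigma$ in $\Game(\delta)$, put $p_0 = \sigma(\emptyset)$, and fix a $\V$-generic $G$ for ${\Q}_{< \delta}$ with $p_0 \in G$.

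Working in $\VG$, I would first build a single run $\seq{p}{n}{\in}{\omega}$ of $\Game(\delta)$ that is consistent with $\sigma$ (that is, Empty's moves are dictated by $\sigma$) and satisfies $p_n \in G$ for all $n$. This proceeds by recursion, keeping the inductive hypothesis that $(p_0, \dotsc, p_{2n})$ is a position consistent with $\sigma$ all of whose entries lie in $G$; note that this finite sequence lies in $\V$, since each of its entries is a single condition. Given such a position, the set $D_n = \left\{ \sigma(p_0, \dotsc, p_{2n}, q) : q \in {\Q}_{< \delta} \ \text{and} \ q \leq p_{2n} \right\}$ belongs to $\V$ and is dense below $p_{2n}$ (given $s \leq p_{2n}$, take $q = s$, so that $\sigma(p_0, \dotsc, p_{2n}, s) \leq s$ lies in $D_n$). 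Since $p_{2n} \in G$, choose $r \in G \cap D_n$, say $r = \sigma(p_0, \dotsc, p_{2n}, q)$ with $r \leq q \leq p_{2n}$, and set $p_{2n+1} = q$ and $p_{2n+2} = r$; both belong to $G$ because $r \in G$ and $r \leq q$, and the inductive hypothesis is preserved. Because $\sigma$ is a winning strategy for Empty, the run so produced has no thread: writing $p_n = \pr{A_n}{S_n}$, there is no sequence $\seq{N}{l}{\in}{\omega}$ with $N_l \in S_l$ for all $l$ and $N_k = N_l \cap A_k$ whenever $k \leq l$.

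Next I would push this run through $j$ to reach a contradiction. For each $n$, since $p_n \in G$ we have $j''A_n \in j(S_n)$, and since $A_k \subseteq A_n$ for $k \leq n$ we have $j''A_k = j''A_n \cap j(A_k)$. Each $j''A_n$ is an element of $M$ (it is represented by the identity function on ${\[A_n\]}^{< {\aleph}_{1}}$), so by $\omega$-closure the sequences $\langle j(p_n) : n \in \omega \rangle$ and $\langle j''A_n : n \in \omega \rangle$ both belong to $M$. Applying $j$ to the statements $p_0 = \sigma(\emptyset)$, $p_{2n+1} \leq p_{2n}$, and $p_{2n+2} = \sigma(p_0, \dotsc, p_{2n+1})$ --- which hold in $\V$, each involving only $\sigma$ and a fixed finite tuple of conditions --- and using $j(\omega) = \omega$ together with the transitivity of $M$ (so $\omega^M = \omega$ and $M$ has no nonstandard naturals), one checks that, in $M$, $\langle j(p_n) : n \in \omega \rangle$ is a run of $\Game(j(\delta))$ consistent with $j(\sigma)$ as a strategy for Empty, and $\langle j''A_n : n \in \omega \rangle$ is a thread for it; hence $M$ thinks Non-Empty wins that run, so $j(\sigma)$ is not a winning strategy for Empty in $\Game(j(\delta))$ in $M$. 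This contradicts the elementarity of $j$ applied to the assertion that $\sigma$ is a winning strategy for Empty in $\Game(\delta)$.

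The weight of the theorem lies entirely in the cited large-cardinal input: without a large cardinal the generic ultrapower of ${\Q}_{< \delta}$ need not be closed under $\omega$-sequences, and in fact Empty can then win $\Game(\delta)$; so the theorem is, as already noted, a repackaging of that closure property. The only point that needs care in the argument above is that the run defeating $\sigma$ is necessarily produced in $\VG$ rather than in $\V$. This causes no difficulty, because every initial segment of the run, and every position at which $\sigma$ is consulted, is a finite object lying in $\V$ on which $j$ acts coordinatewise; the $\omega$-closure of $M$ is then exactly what allows one to reassemble the image run, together with its thread, inside $M$.
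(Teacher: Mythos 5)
Your core derivation — from the package ``the ${\Q}_{<\delta}$-generic ultrapower $j\colon \V \to M$ is transitive, $\omega$-closed in $\VG$, and satisfies $\pr{A}{S} \in G \Leftrightarrow j''A \in j(S)$'' to ``Empty has no winning strategy'' — is correct and carefully executed: the genericity argument producing a run consistent with $\sigma$ all of whose moves lie in $G$, the coherence $j''A_k = j''A_l \cap j(A_k)$, the use of $\omega$-closure to place both $\langle j(p_n) : n \in \omega\rangle$ and the thread $\langle j''A_n : n \in \omega\rangle$ inside $M$, and the final contradiction with elementarity are all fine. This is exactly one direction of the ``essential equivalence'' with $\omega$-closure that the paper mentions; the paper itself gives no self-contained argument but instead points to Todorcevic's Lemma 2.3 in \cite{genericcontinuity} and to adapting Larson's Lemma 2.5.6, which are direct combinatorial proofs not passing through forcing with the tower. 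For the Woodin case your route is a legitimate alternative, since the facts you quote are precisely Woodin's theorem as presented in Larson's book.

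The genuine gap is the strongly compact case. A strongly compact cardinal need not be Woodin (by Magidor's theorem the least strongly compact can be the least measurable, which is not Woodin), so that case cannot be folded into the Woodin one; and the ``well-known fact'' you invoke — well-foundedness and $\omega$-closure of the ${\Q}_{<\delta}$ generic ultrapower — is, in the cited sources, a theorem about Woodin cardinals only. Neither Larson nor Todorcevic asserts it for a strongly compact $\delta$, and the standard proofs of $\omega$-closure use Woodin-style reflection (or supercompact-style closure of a target model), which strong compactness does not supply. The paper's intended proof for that half is Todorcevic's Lemma 2.3, whose argument works directly from a countably complete fine ultrafilter furnished by strong compactness and never forms a generic ultrapower of the tower; as written, your proposal leaves the strongly compact half of the theorem unproved. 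A minor further point: the sentence asserting that ``the run so produced has no thread'' is both unjustified (the run lies in $\VG$, whereas $\sigma$ is only known to defeat runs lying in $\V$) and unnecessary, since your contradiction comes from elementarity; it should simply be deleted.
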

\section{Main Theorem} \label{sec:main}
Fix, once and for all, an uncountable cardinal $\delta$, which is either Woodin or strongly compact.
Fix in addition a regular topological space $\pr{X}{\TT, \BB} \in {V}_{\delta}$, where $\BB \subseteq \TT$ is a point-countable base and $\pr{X}{\TT}$ is not left-separated.
Put ${A}_{0} = X$ and ${S}_{0} = \left\{ N \in {\[X\]}^{< {\aleph}_{1}}: \overline{N}\setminus N \neq 0 \right\}$.
Note that $\pr{{A}_{0}}{{S}_{0}} \in {\Q}_{< \delta}$.
\begin{Def} \label{def:function}
  Fix a function $F: {S}_{0} \rightarrow X$ such that $F(N) \in \overline{N} \setminus N$, for each $N \in {S}_{0}$.
  If $\pr{A}{S} \leq \pr{{A}_{0}}{{S}_{0}}$, then for any $M \in S$, $M \cap {A}_{0} \in {S}_{0}$, and we will abuse notation and write $F(M)$ to mean $F(M \cap {A}_{0})$. 
\end{Def}
We will first prove a sequence of simple lemmas establishing some useful properties of $F$ and of the neighborhoods in $\BB$.
The first property is that $F$ is ``nowhere constant'', meaning that the preimage of every point in ${A}_{0}$ is non-stationary.
\begin{Lemma} \label{lem:nowhere}
  For any $\pr{A}{S} \leq \pr{{A}_{0}}{{S}_{0}}$ and any $x \in {A}_{0}$, $\{M \in S: F(M) = x \}$ is non-stationary in ${\[A\]}^{< {\aleph}_{1}}$.
\end{Lemma}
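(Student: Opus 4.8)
The plan is to argue by contradiction. Suppose that for some condition $\pr{A}{S} \leq \pr{{A}_{0}}{{S}_{0}}$ and some point $x \in {A}_{0}$, the set $S_x = \{M \in S : F(M) = x\}$ is stationary in ${\[A\]}^{< {\aleph}_{1}}$. Then $\pr{A}{S_x} \in {\Q}_{< \delta}$ and $\pr{A}{S_x} \leq \pr{{A}_{0}}{{S}_{0}}$, so by definition of $F$ (and the convention in Definition \ref{def:function}), every $M \in S_x$ satisfies $F(M \cap {A}_{0}) = x$, i.e. $x \in \overline{M \cap {A}_{0}} \setminus (M \cap {A}_{0})$; in particular $x \notin M$ for all $M \in S_x$. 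The goal is to manufacture a club $C$ in ${\[A\]}^{< {\aleph}_{1}}$ that misses $S_x$, contradicting stationarity.

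The key observation is that the condition $F(M \cap A_0) \notin M$ for all $M$ in a stationary set is fragile under the requirement that $M$ be closed under enough structure. First I would pass to $B = H(\chi)$ for a large enough regular $\chi$ with $\{A, x, \pr{X}{\TT}, \BB\} \subseteq H(\chi)$ and $H(\chi) \in V_\delta$; by the argument in the proof of Lemma \ref{lem:gooddense}, the set $C_0 = \{M \in {\[B\]}^{<{\aleph}_1} : M \prec H(\chi) \text{ and } \{A, x, \pr{X}{\TT}, \BB\} \subseteq M\}$ is a club in ${\[B\]}^{<{\aleph}_1}$, so $\pr{B}{C_0 \cap S_x^{\uparrow B}} \leq \pr{A}{S_x}$ remains a condition. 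But now for any $M$ in this new stationary set we have $x \in M$ (since $x \in \{A,x,\ldots\} \subseteq M$), while simultaneously $x = F(M \cap A_0) \notin M \cap A_0$, which is only a contradiction if $x \in A_0$ — and indeed $x \in {A}_{0} \subseteq A \subseteq M$, so we get $x \in M \cap A_0$, contradicting $x \notin M \cap A_0$. Hence no such $M$ exists, the set $C_0 \cap S_x^{\uparrow B}$ is empty, so $S_x^{\uparrow B}$ is non-stationary in ${\[B\]}^{<{\aleph}_1}$, and therefore $S_x$ is non-stationary in ${\[A\]}^{<{\aleph}_1}$ by the contrapositive of Theorem \ref{thm:statprojection}(4).

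The main thing to get right is the bookkeeping around the convention $F(M) := F(M \cap A_0)$ and the containments $x \in A_0 \subseteq A \subseteq B$: the whole argument is just the observation that any countable $M$ elementary in a large enough $H(\chi)$ and containing $x$ as an element also contains $x$ as an element of $M \cap A_0$, which flatly contradicts $F(M \cap A_0) = x \in \overline{M \cap A_0} \setminus (M \cap A_0)$. There is no real obstacle here; the lemma is genuinely ``simple'' as advertised, and the only care needed is to invoke Theorem \ref{thm:statprojection}(4) in its contrapositive form (a superset-pullback of a stationary set is stationary, so if the pullback is non-stationary the original set was non-stationary) rather than trying to project the empty club back down directly.
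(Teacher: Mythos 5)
Your proof is correct and is essentially the paper's own argument: the paper simply notes that $\{M \in {[A]}^{<{\aleph}_{1}} : x \in M\}$ is a club, picks $M \in S$ with $x \in M$ and $F(M) = x$, and derives the same contradiction $x \in M \cap {A}_{0}$ versus $F(M) \in \overline{M \cap {A}_{0}} \setminus (M \cap {A}_{0})$; your detour through $B = H(\chi)$, elementary submodels, and the contrapositive of Theorem \ref{thm:statprojection}(4) is harmless but unnecessary, since elementarity is never actually used --- the only fact needed is $x \in M$, which can be arranged by a club in ${[A]}^{<{\aleph}_{1}}$ directly. One small slip: the chain $x \in {A}_{0} \subseteq A \subseteq M$ is false as written ($M$ is countable while $A$ need not be), but it is also not needed, since you had already secured $x \in M$ by putting $x$ among the required elements of the models in ${C}_{0}$, and $x \in {A}_{0}$ is a hypothesis of the lemma.
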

\begin{proof}
  Suppose for a contradiction that $S' = \{M \in S: F(M) = x \}$ is stationary in ${\[A\]}^{< {\aleph}_{1}}$.
  Note that $C = \{M \in {\[A\]}^{< {\aleph}_{1}}: \{x\} \subseteq M\}$ is a club in ${\[A\]}^{< {\aleph}_{1}}$.
  Choose $M \in S$ with $x \in M$ and $F(M) = x$.
  Put $N = M \cap {A}_{0}$.
  Then $x \in N$.
  However $x = F(M) = F(N) \in \overline{N}\setminus N$.
  This is a contradiction completing the proof.
\end{proof}
The next property concerns the ``largeness'' of neighborhoods of points in $X$.
For any condition in ${\Q}_{< \delta}$ below $\pr{{A}_{0}}{{S}_{0}}$, every neighborhood of almost every point in the image of that condition has large intersection with the same image.
This is proved by a simple application of the pressing-down lemma.
\begin{Def} \label{def:Bn}
  For each $x \in X$, fix an enumeration $\left\langle {U}_{x, n}: n \in \omega \right\rangle$ of the set $\left\{ U \in \BB: x \in U \right\}$.
  For any $\pr{A}{S} \leq \pr{{A}_{0}}{{S}_{0}}$, we will say that $M \in S$ is \emph{bad} if there exists $n \in \omega$ such that $\left\{M' \in S: F(M') \in {U}_{F(M), n}\right\}$ is non-stationary in ${\[A\]}^{< {\aleph}_{1}}$.
\end{Def}
\begin{Lemma} \label{lem:badsmall}
  Suppose $\pr{A}{S} \leq \pr{{A}_{0}}{{S}_{0}}$.
  Then $\{M \in S: M \ \text{is bad}\}$ is non-stationary.
\end{Lemma}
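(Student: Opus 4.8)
The plan is to argue by contradiction, assuming that $S^* = \{M \in S : M \text{ is bad}\}$ is stationary in ${\[A\]}^{< {\aleph}_{1}}$, and then to extract a single $n \in \omega$ that witnesses badness on a stationary set, before using the normality of the nonstationary ideal to derive a contradiction. First I would define, for each $M \in S^*$, the least $n = n(M) \in \omega$ such that $\{M' \in S : F(M') \in {U}_{F(M), n}\}$ is non-stationary in ${\[A\]}^{< {\aleph}_{1}}$; such an $n$ exists by the definition of bad. Since $\omega$ is countable and $S^*$ is stationary, there is a fixed $n^* \in \omega$ such that $S^{**} = \{M \in S^* : n(M) = n^*\}$ is stationary (this uses Theorem \ref{thm:normality}, the pressing down lemma, applied to the regressive-style function $M \mapsto n(M)$ — though here $n(M)$ need not land in $M$, so it is cleaner to just note that a countable union of non-stationary sets is non-stationary, hence one of the countably many pieces $\{M \in S^* : n(M) = n\}$ must be stationary).

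Next I would apply the pressing down lemma properly. The function $M \mapsto F(M)$ is regressive on the stationary set $S^{**}$: indeed, for $M \in S^{**} \subseteq S$, we have $F(M) = F(M \cap {A}_{0}) \in \overline{M \cap {A}_{0}}$, but more to the point, whenever $M$ is an elementary submodel containing the relevant parameters, $F(M) \in M$ — wait, that is false in general since $F(M) \in \overline{N}\setminus N$. So instead I would use Lemma \ref{lem:nowhere}: for each $x \in {A}_{0}$, the set $\{M \in S : F(M) = x\}$ is non-stationary. To make this bite, I would intersect $S^{**}$ with the club $C$ of $M$ that are ``closed enough'' and then press down on $F$ after all. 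Actually the cleanest route: since $F$ restricted to $S^{**}$ takes values in ${A}_{0}$, and $S^{**}$ is stationary, if $F$ were regressive we could press down; the issue is regressivity. The fix is that $\{M \in {\[A\]}^{< {\aleph}_{1}} : {A}_{0} \cap M \text{ is infinite and } M \prec (H(\chi),\in,\ldots)\}$-type clubs do not help with $F(M) \notin M$. Therefore the right argument does not press down on $F$ at all.

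Instead, here is the argument I expect to work. On the stationary set $S^{**}$, pick $M_0 \in S^{**}$ and set $x_0 = F(M_0)$. By the choice of $n^*$, the set $S_0' = \{M' \in S : F(M') \in {U}_{x_0, n^*}\}$ is non-stationary. But $M_0 \in S_0'$ trivially (since $F(M_0) = x_0 \in {U}_{x_0, n^*}$), which alone gives nothing. The real point must be a uniformization: there is a single point $y$ such that $F(M) = y$ for stationarily many $M \in S^{**}$ — impossible by Lemma \ref{lem:nowhere} — OR stationarily many distinct values occur, and then one derives a contradiction from the fact that for each such value $x$, the ``small'' set excludes a club. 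Concretely, I would intersect $S^{**}$ with the club $C_f$-type set forcing closure, obtain $M \in S^{**}$ with $F(M) \in \overline{{A}_0 \cap M}$, hence by Lemma \ref{lem:elementary1} every $U \in {\BB}_{\{F(M)\}}$ lies in $M$; in particular ${U}_{F(M), n^*} \in M$. Since $M \prec H(\chi)$ and $S, F \in M$, the statement ``$\{M' \in S : F(M') \in {U}_{F(M), n^*}\}$ is non-stationary'' is then reflected into $M$, yielding (via a bounded witnessing function $f \in M$) a club $C_f$ with $C_f \cap \{M' : F(M') \in {U}_{F(M), n^*}\} = \emptyset$ and $f \in M$; but then $M \cap {A}_0 \in C_f$ by closure of $M$ under $f$, while $F(M) \in {U}_{F(M), n^*}$ — and $F(M) = F(M \cap {A}_0)$, so taking $M' = M$ we need $M \in C_f$, which holds, contradicting disjointness. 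The main obstacle is exactly this last reflection step: making precise that the non-stationarity witness for the set depending on ${U}_{F(M), n^*}$ can be taken inside $M$, which requires that ${U}_{F(M), n^*} \in M$ (supplied by Lemma \ref{lem:elementary1}) together with elementarity of $M$ and the absoluteness of ``non-stationary in ${\[A\]}^{< {\aleph}_{1}}$'' via a bounded witnessing function. I would carry out: (1) the countable-union reduction to a fixed $n^*$; (2) passing to a stationary $S^{**} \subseteq S$ of $M$ that are elementary in a large enough $H(\chi)$ containing $S, F, \BB, \langle {U}_{x,n}\rangle$ and with ${A}_0 \cap M$ such that $F(M) \in \overline{{A}_0 \cap M}$ (a club condition, available since $S \subseteq S_0$); (3) for such $M$, applying Lemma \ref{lem:elementary1} to get ${U}_{F(M), n^*} \in M$; (4) using elementarity to pull the non-stationarity witness inside $M$ and deriving $M \cap {A}_0$ belongs to the corresponding club while also $F(M \cap {A}_0) \in {U}_{F(M), n^*}$, the contradiction.
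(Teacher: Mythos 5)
Your final plan is correct in substance, but it proves the lemma by a genuinely different route from the paper. The paper's proof lifts the condition $\pr{A}{{S}_{1}}$ (where ${S}_{1}$ is the assumed-stationary set of bad $M$) to a condition $\pr{B}{T}$ with $B = H(\chi)$ via Lemma \ref{lem:gooddense}, uses Lemma \ref{lem:elementary1} to see that each $K \in T$ contains its own witnessing neighborhood ${U}_{K} = {U}_{F(K), n} \in K$, applies the pressing down lemma (Theorem \ref{thm:normality}) to the regressive assignment $K \mapsto {U}_{K}$ to extract a single $U$ on a stationary $T'$, and then projects $T'$ down to ${[A]}^{< {\aleph}_{1}}$ (Theorem \ref{thm:statprojection}(2)), landing a stationary set inside $\{M' \in S: F(M') \in U\}$ and contradicting its non-stationarity. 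You never uniformize the neighborhood: you take one countable $M \prec H(\chi)$ whose trace on $A$ is bad, get ${U}_{F(M), n} \in M$ from Lemma \ref{lem:elementary1}, and reflect the non-stationarity of $\{M' \in S: F(M') \in {U}_{F(M), n}\}$ into $M$, so that a witnessing club (or function $f$ with club ${C}_{f}$) lies in $M$, while $M \cap A$ is closed under it and yet belongs to the forbidden set. This self-reflection argument buys a shorter proof using a single model and no pressing down; the paper's argument buys the feature that it runs entirely on the two background facts stated in Section \ref{sec:prelim} (normality and projection of stationary sets), whereas yours additionally invokes the standard Kueker--Jech fact that non-stationarity in ${[A]}^{< {\aleph}_{1}}$ is witnessed by a club of the form ${C}_{f}$ (equivalently, that a countable $M \prec H(\chi)$ with the club as an element satisfies $M \cap A \in C$), which the paper nowhere states.

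Three small repairs are needed to make your write-up precise. First, elements of $S$ are countable subsets of $A$, not elementary submodels of $H(\chi)$, so ``a stationary ${S}^{\ast\ast} \subseteq S$ of $M$ that are elementary in $H(\chi)$'' should be replaced by the lifted stationary set $\left\{ M \in {[H(\chi)]}^{< {\aleph}_{1}}: M \prec H(\chi), \ S, F, \BB, \pr{X}{\TT} \in M, \ M \cap A \in {S}^{\ast\ast} \right\}$, which is stationary by Theorem \ref{thm:statprojection}(4); this is exactly what Lemma \ref{lem:gooddense} packages, and it is what your later use of Lemma \ref{lem:elementary1} tacitly assumes. Second, the object that lies both in ${C}_{f}$ and in $\{M' \in S: F(M') \in {U}_{F(M), n}\}$ is $M \cap A$ (not $M$, and not $M \cap {A}_{0}$): $M \cap A$ is closed under $f \in M$ because every finite subset of $M \cap A$ is an element of $M$, it belongs to $S$, and $F(M \cap A) = F(M) \in {U}_{F(M), n}$. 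Third, your preliminary reduction to a fixed ${n}^{\ast}$ via countable additivity is harmless but unnecessary: the contradiction is extracted from one model $M$, so the witness $n$ attached to that particular bad $M \cap A$ already suffices.
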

\begin{proof}
Write ${S}_{1} = \{M \in S: M \ \text{is bad}\}$.
Assume for a contradiction that ${S}_{1}$ is stationary in ${\[A\]}^{< {\aleph}_{1}}$.
Then $\pr{A}{{S}_{1}} \leq \pr{A}{S} \leq \pr{{A}_{0}}{{S}_{0}}$.
Applying Lemma \ref{lem:gooddense}, there exists $\pr{B}{T} \leq \pr{A}{{S}_{1}}$ with the property that $B = H(\chi)$ where $\chi$ is an uncountable regular cardinal, and for all $K \in T$, $K \prec H(\chi)$, $\lc K \rc = {\aleph}_{0}$ and $\pr{X}{\TT}, \BB \in K$.
For any $K \in T$, $M = K \cap A \in {S}_{1}$, and so $M$ is bad, which means that there exists $n \in \omega$ so that $\left\{ M' \in S: F(M') \in {U}_{F(M), n} \right\}$ is non-stationary in ${\[A\]}^{< {\aleph}_{1}}$.
Note that $F(K) = F(M)$ and that ${U}_{F(K), n} \in K$ because of Lemma \ref{lem:elementary1}.
Thus for each $K \in T$, we have ${U}_{K} \in K$ such that $F(K) \in {U}_{K}$ and $\left\{M' \in S: F(M') \in {U}_{K} \right\}$ is non-stationary in ${\[A\]}^{< {\aleph}_{1}}$.
By the pressing down lemma, there exists $U$ so that
\begin{align*}
 T' = \{K \in T: {U}_{K} = U \}
\end{align*}
is stationary in ${\[B\]}^{< {\aleph}_{1}}$.
Since $T' \neq \emptyset$, $\left\{ M' \in S: F(M') \in U \right\}$ is non-stationary in ${\[A\]}^{< {\aleph}_{1}}$.
On the other hand, ${T}_{\downarrow A}'$ is stationary in ${\[A\]}^{< {\aleph}_{1}}$ and
\begin{align*}
 {T}_{\downarrow A}' \subseteq \left\{ M' \in S: F(M') \in U \right\}.
\end{align*}
This is a contradiction that concludes the proof.
\end{proof}
Lemma \ref{lem:badsmall} says that for any $\pr{A}{S} \leq \pr{{A}_{0}}{{S}_{0}}$, the set $\{M \in S: M \ \text{is not bad}\}$ is almost equal to $S$.
Therefore once all the bad points in $S$ have been thrown away, none of the remaining points can be bad in what's left.
So there is no need to repeat the operation of throwing away bad points.
This is what Lemma \ref{lem:stildeisgood} says.
\begin{Def} \label{def:stilde}
  For any $\pr{A}{S} \leq \pr{{A}_{0}}{{S}_{0}}$, define $\widetilde{S} = \{M \in S: M \ \text{is not bad}\}$.
  By Lemma \ref{lem:badsmall}, $\pr{A}{\widetilde{S}} \in {\Q}_{< \delta}$ and $\pr{A}{\widetilde{S}} \leq \pr{A}{S}$.
\end{Def}
\begin{Lemma} \label{lem:stildeisgood}
  Let $\pr{A}{S} \leq \pr{{A}_{0}}{{S}_{0}}$.
  For any $M \in \widetilde{S}$ and any $n \in \omega$, 
  \begin{align*}
   \left\{ M' \in \widetilde{S}: F(M') \in {U}_{F(M), n} \right\}
  \end{align*} 
  is stationary in ${\[A\]}^{< {\aleph}_{1}}$.
\end{Lemma}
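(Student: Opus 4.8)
The plan is to exploit the fact that $\widetilde{S}$ differs from $S$ only by the removal of a non-stationary set, so that stationarity of a set of models defined in terms of $F$ transfers from $S$ to $\widetilde{S}$. First I would unpack the hypothesis: by Definition \ref{def:stilde}, $M \in \widetilde{S}$ means precisely that $M$ is \emph{not} bad, so by Definition \ref{def:Bn}, for the given $n \in \omega$ the set
\begin{align*}
  P = \left\{ M' \in S: F(M') \in {U}_{F(M), n} \right\}
\end{align*}
is stationary in ${\[A\]}^{< {\aleph}_{1}}$. (Note $F(M) \in X$, since $F(M) = F(M \cap {A}_{0}) \in \overline{M \cap {A}_{0}} \setminus (M \cap {A}_{0}) \subseteq X$ by Definition \ref{def:function}, so ${U}_{F(M), n}$ is a legitimate basic open set.)

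Next I would invoke Lemma \ref{lem:badsmall} to get that $B = \left\{ M' \in S: M' \ \text{is bad} \right\}$ is non-stationary in ${\[A\]}^{< {\aleph}_{1}}$, and recall from Definition \ref{def:stilde} that $\widetilde{S} = S \setminus B$. The key observation is then the set-theoretic identity
\begin{align*}
  \left\{ M' \in \widetilde{S}: F(M') \in {U}_{F(M), n} \right\} = P \setminus B,
\end{align*}
which is immediate from the definitions. To finish, I would argue that if $P \setminus B$ were non-stationary, then since $P \subseteq (P \setminus B) \cup B$ and non-stationarity in ${\[A\]}^{< {\aleph}_{1}}$ is closed under subsets and finite unions (Theorem \ref{thm:normality}), the set $P$ would be non-stationary, contradicting the previous paragraph. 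Hence $P \setminus B$ is stationary, which is exactly the claim.

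I do not expect a genuine obstacle here: the lemma is a bookkeeping step whose whole point is to record that the passage $S \mapsto \widetilde{S}$ need only be performed once. The only places that warrant a moment's care are verifying the identity $\left\{ M' \in \widetilde{S}: F(M') \in {U}_{F(M), n} \right\} = P \setminus B$ directly from Definitions \ref{def:Bn} and \ref{def:stilde}, and noting that ``stationary minus non-stationary is stationary'' is nothing more than an instance of the ideal properties already isolated in Theorem \ref{thm:normality}.
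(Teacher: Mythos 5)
Your proposal is correct and follows essentially the same route as the paper: unwind the definition of ``not bad'' to see that $\{M' \in S: F(M') \in U_{F(M),n}\}$ is stationary, then remove the non-stationary set $S \setminus \widetilde{S}$ (Lemma \ref{lem:badsmall}) and use that the non-stationary sets form an ideal (Theorem \ref{thm:normality}). No difference in substance, only in bookkeeping notation.
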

\begin{proof}
  Take any $M \in \widetilde{S}$ and any $n \in \omega$.
  Then $M$ is not bad in $S$, which means that $\left\{ M' \in S: F(M') \in {U}_{F(M), n} \right\}$ is stationary in ${\[A\]}^{< {\aleph}_{1}}$.
  Since $S \setminus \widetilde{S}$ is non-stationary in ${\[A\]}^{< {\aleph}_{1}}$, it follows that $\widetilde{S} \cap \left\{ M' \in S: F(M') \in {U}_{F(M), n} \right\} = \left\{ M' \in \widetilde{S}: F(M') \in {U}_{F(M), n} \right\}$ is stationary in ${\[A\]}^{< {\aleph}_{1}}$.
\end{proof}
\begin{Def} \label{def:coloring}
  Fix $l \in \omega$ with $l > 0$ and fix $c: {\[X\]}^{2} \rightarrow l$.
  Suppose $\pr{A}{S} \leq \pr{{A}_{0}}{{S}_{0}}$ and $\pr{B}{T} \leq \pr{{A}_{0}}{{S}_{0}}$.
  For any $i \in l$ and any $M \in S$, define $\KK(c, i, M, B, T)$ to be $\left\{ M' \in T: F(M) \neq F(M') \ \text{and} \ c(F(M), F(M')) = i \right\}$.
  We will say that $M$ is \emph{$i$-large in $\pr{B}{T}$ w.r.t.\@ $c$} if $\KK(c, i, M, B, T)$ is stationary in ${\[B\]}^{< {\aleph}_{1}}$.
  
  For any $i, j \in l$, the pair $\pr{\pr{A}{S}}{\pr{B}{T}}$ is said to be \emph{$\pr{i}{j}$-saturated w.r.t.\@ $c$} if for any $\pr{A'}{S'} \leq \pr{A}{S}$ and any $\pr{B'}{T'} \leq \pr{B}{T}$, both of the clauses below hold:
  \begin{enumerate}
   \item
   $\left\{ M \in S': M \ \text{is} \ i\text{-large in} \ \pr{B'}{T'} \ \text{w.r.t.\@} \ c \right\}$ is stationary in ${\[A'\]}^{< {\aleph}_{1}}$; 
   \item
   $\left\{ K \in T': K \ \text{is} \ j\text{-large in} \ \pr{A'}{S'} \ \text{w.r.t.\@} \ c \right\}$ is stationary in ${\[B'\]}^{< {\aleph}_{1}}$.
 \end{enumerate}  
\end{Def}
Intuitively, if a pair $\pr{\pr{A}{S}}{\pr{B}{T}}$ is $\pr{i}{j}$-saturated w.r.t.\@ $c$, then the colors $i$ and $j$ occur in every rectangle whose sides are conditions below $\pr{A}{S}$ and $\pr{B}{T}$ in ${\Q}_{< \delta}$.
More precisely, any rectangle whose base is a condition below $\pr{A}{S}$ and whose height is a condition below $\pr{B}{T}$, must contain many vertical columns with a large collection of $i$-colored points, and also many horizontal rows with a large collection of $j$-colored points. 
\begin{Lemma} \label{lem:largenessprojects}
  Suppose $\pr{A'}{S'} \leq \pr{A}{S} \leq \pr{{A}_{0}}{{S}_{0}}$ and that $\pr{B'}{T'} \leq \pr{B}{T} \leq \pr{{A}_{0}}{{S}_{0}}$.
  For any $i \in l$ and any $M \in S'$, $\KK(c, i, M, B, T) = \KK(c, i, M \cap A, B, T)$.
  Also if $M$ is $i$-large in $\pr{B'}{T'}$ w.r.t.\@ $c$, then $M$ is $i$-large in $\pr{B}{T}$ w.r.t.\@ $c$.
  Further, if $M$ is $i$-large in $\pr{B'}{T'}$ w.r.t.\@ $c$, then $M \cap A$ is $i$-large in $\pr{B}{T}$ w.r.t.\@ $c$.
\end{Lemma}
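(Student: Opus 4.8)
The plan is to unwind the definitions and observe that the only dependence on the ``base'' condition $\pr{A}{S}$ in $\KK(c,i,M,B,T)$ is through the value $F(M)$, which by the convention in Definition \ref{def:function} equals $F(M\cap{A}_{0})$ regardless of which intermediate condition $M$ is viewed as living in. So for the first assertion I would simply write out both sides: $\KK(c,i,M,B,T)=\{M'\in T: F(M)\neq F(M')\ \text{and}\ c(F(M),F(M'))=i\}$, and since $M\in S'$ implies $M\cap A\in S$ (because $\pr{A'}{S'}\leq\pr{A}{S}$ gives ${S'}_{\downarrow A}\subseteq S$) and $F(M)=F(M\cap{A}_{0})=F((M\cap A)\cap{A}_{0})=F(M\cap A)$, the two defining sets are literally identical. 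No stationarity is needed here, only the abuse-of-notation convention for $F$.

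For the second assertion, suppose $M$ is $i$-large in $\pr{B'}{T'}$, i.e.\@ $\KK(c,i,M,B',T')$ is stationary in ${\[B'\]}^{< {\aleph}_{1}}$. Since $\pr{B'}{T'}\leq\pr{B}{T}$ we have $B'\supseteq B$ and ${T'}_{\downarrow B}\subseteq T$. I would check that projecting $\KK(c,i,M,B',T')$ down to $B$ lands inside $\KK(c,i,M,B,T)$: if $M'\in T'$ with $F(M)\neq F(M')$ and $c(F(M),F(M'))=i$, then $M'\cap B\in T$, and $F(M'\cap B)=F(M')$ by the same convention, so $M'\cap B\in\KK(c,i,M,B,T)$. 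By Theorem \ref{thm:statprojection}(2), the projection of a stationary subset of ${\[B'\]}^{< {\aleph}_{1}}$ to ${\[B\]}^{< {\aleph}_{1}}$ is stationary, and a superset of a stationary set is stationary; hence $\KK(c,i,M,B,T)$ is stationary in ${\[B\]}^{< {\aleph}_{1}}$, as required.

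The third assertion combines the first two: by the first part, $\KK(c,i,M\cap A,B,T)=\KK(c,i,M,B,T)$, which by the second part is stationary in ${\[B\]}^{< {\aleph}_{1}}$; thus $M\cap A$ is $i$-large in $\pr{B}{T}$ w.r.t.\@ $c$. I do not expect any real obstacle here — the lemma is a bookkeeping statement, and the only points that require a moment's care are (a) keeping straight that $F$ is genuinely a function of $M\cap{A}_{0}$ and therefore insensitive to which condition $M$ is regarded as belonging to, and (b) invoking the correct direction of Theorem \ref{thm:statprojection}, namely the downward projection (part (2)) from the larger underlying set $B'$ to the smaller set $B$, together with the trivial fact that enlarging a set preserves stationarity.
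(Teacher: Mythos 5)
Your proposal is correct and follows essentially the same route as the paper: the first identity comes purely from the convention $F(M)=F(M\cap {A}_{0})=F(M\cap A)$, the second from projecting the stationary set $\KK(c,i,M,B',T')$ down to ${\[B\]}^{< {\aleph}_{1}}$ via Theorem \ref{thm:statprojection}(2) and noting the projection sits inside $\KK(c,i,M,B,T)$, and the third by combining the two. No gaps.
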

\begin{proof}
Indeed $M \cap A \in S$ and $F(M) = F(M \cap A)$ and so
\begin{align*}
 \KK(c, i, M, B, T) &= \left\{ M' \in T: F(M \cap A) \neq F(M') \ \text{and} \ c(F(M \cap A), F(M')) = i \right\}\\
 &= \KK(c, i, M \cap A, B, T).
\end{align*}
Moreover if $M$ is $i$-large in $\pr{B'}{T'}$ w.r.t.\@ $c$, then $\KK(c, i, M, B', T')$ is stationary in ${\[B'\]}^{< {\aleph}_{1}}$.
Since ${\left( \KK(c, i, M, B', T') \right)}_{\downarrow B}$ is stationary in ${\[B\]}^{< {\aleph}_{1}}$ and since we have that ${\left( \KK(c, i, M, B', T') \right)}_{\downarrow B} \subseteq \KK(c, i, M, B, T) \subseteq T \subseteq {\[B\]}^{< {\aleph}_{1}}$, it follows that $\KK(c, i, M, B, T)$ is stationary in ${\[B\]}^{< {\aleph}_{1}}$.
Hence $M$ is $i$-large in $\pr{B}{T}$ w.r.t.\@ $c$.
Finally if $M$ is $i$-large in $\pr{B'}{T'}$ w.r.t.\@ $c$, then by what we have remarked up to now, $\KK(c, i, M, B, T) = \KK(c, i, M \cap A, B, T)$ is stationary in ${\[B\]}^{< {\aleph}_{1}}$, and so $M \cap A$ is $i$-large in $\pr{B}{T}$ w.r.t.\@ $c$.  
\end{proof}
The next lemma expresses the simple fact that for a fixed row or column in any rectangle, there must be a color that occurs frequently along that row or column. 
\begin{Lemma} \label{lem:oneislarge}
Suppose $\pr{A}{S} \leq \pr{{A}_{0}}{{S}_{0}}$ and $\pr{B}{T} \leq \pr{{A}_{0}}{{S}_{0}}$.
For each $M \in S$ and for each $K \in T$, there exists $\pr{i}{j} \in l \times l$ such that $M$ is $i$-large in $\pr{B}{T}$ w.r.t.\@ $c$ and $K$ is $j$-large in $\pr{A}{S}$ w.r.t.\@ $c$.
\end{Lemma}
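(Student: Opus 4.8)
The plan is to reduce the statement to Lemma \ref{lem:nowhere} together with the finite (indeed countable) additivity of the non-stationary ideal from Theorem \ref{thm:normality}. The point is that fixing a single row or column in the rectangle $\pr{\pr{A}{S}}{\pr{B}{T}}$ splits the opposite side into the $l$ colour classes $\KK(c,i,M,B,T)$ (for $i \in l$) plus one ``diagonal'' piece, and the diagonal piece is exactly the kind of fibre that Lemma \ref{lem:nowhere} declares non-stationary; so stationarity of the whole side forces one of the colour classes to be stationary.

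In more detail, I would first fix $M \in S$ and $K \in T$ and check that $F(M), F(K) \in {A}_0 = X$: since $\pr{B}{T} \leq \pr{{A}_{0}}{{S}_{0}}$ we have $K \cap {A}_{0} \in {S}_{0}$, and by Definition \ref{def:function} $F(K) = F(K \cap {A}_{0}) \in \overline{K \cap {A}_{0}} \setminus (K \cap {A}_{0}) \subseteq X$, and likewise $F(M) \in X$. Next I would write the decomposition
\begin{align*}
 T = \left\{ M' \in T: F(M') = F(M) \right\} \cup \bigcup_{i \in l}{\KK(c, i, M, B, T)},
\end{align*}
and note that the first set on the right is non-stationary in ${\[B\]}^{< {\aleph}_{1}}$ by Lemma \ref{lem:nowhere} applied to $\pr{B}{T} \leq \pr{{A}_{0}}{{S}_{0}}$ and the point $F(M) \in {A}_{0}$. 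Since $T$ is stationary in ${\[B\]}^{< {\aleph}_{1}}$ and the non-stationary ideal is closed under finite unions by Theorem \ref{thm:normality}, at least one of the $l$ sets $\KK(c, i, M, B, T)$ must be stationary in ${\[B\]}^{< {\aleph}_{1}}$; fixing such an $i$ gives, by Definition \ref{def:coloring}, that $M$ is $i$-large in $\pr{B}{T}$ w.r.t.\@ $c$.

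The argument for $K$ is completely symmetric: using the decomposition
\begin{align*}
 S = \left\{ M' \in S: F(M') = F(K) \right\} \cup \bigcup_{j \in l}{\KK(c, j, K, A, S)},
\end{align*}
Lemma \ref{lem:nowhere} applied to $\pr{A}{S}$ and the point $F(K) \in {A}_{0}$ shows the diagonal piece is non-stationary in ${\[A\]}^{< {\aleph}_{1}}$, so since $S$ is stationary some $\KK(c, j, K, A, S)$ is stationary in ${\[A\]}^{< {\aleph}_{1}}$, i.e.\@ $K$ is $j$-large in $\pr{A}{S}$ w.r.t.\@ $c$. The pair $\pr{i}{j} \in l \times l$ is then as required.

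I do not expect a genuine obstacle here; this is a pigeonhole observation dressed up in the stationary-ideal language. The only two places needing a moment's care are verifying that $F(M)$ and $F(K)$ actually lie in ${A}_{0}$ (so that Lemma \ref{lem:nowhere} is applicable to them), and identifying the ``diagonal'' sets $\{M' : F(M') = F(M)\}$ and $\{M' : F(M') = F(K)\}$ as precisely the fibres whose non-stationarity Lemma \ref{lem:nowhere} guarantees.
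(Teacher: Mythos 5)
Your proposal is correct and follows essentially the same route as the paper: the paper likewise applies Lemma \ref{lem:nowhere} to discard the ``diagonal'' set where $F$ takes the value $F(M)$ (resp.\@ $F(K)$) and then pigeonholes the remaining stationary set over the $l$ colour classes contained in the sets $\KK(c,i,M,B,T)$ (resp.\@ $\KK(c,j,K,A,S)$). The only cosmetic difference is that the paper first forms the stationary off-diagonal sets $T'$ and $S'$ and partitions those, whereas you decompose $T$ and $S$ directly and invoke the finite additivity from Theorem \ref{thm:normality}; the content is identical.
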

\begin{proof}
  Put $x = F(M)$ and $y = F(K)$.
  By Lemma \ref{lem:nowhere}, $T' = \{K' \in T: F(K') \neq x\}$ is stationary in ${\[B\]}^{< {\aleph}_{1}}$ and $S' = \{M' \in S: F(M') \neq y\}$ is stationary in ${\[A\]}^{< {\aleph}_{1}}$.
  For each $i < l$, let ${T}_{i}' = \{K' \in T': c(F(M), F(K')) = i\}$ and let ${S}_{i}' = \{M' \in S': c(F(K), F(M')) = i\}$.
  There must be a pair $\pr{i}{j} \in l \times l$ such that ${T}_{i}' \subseteq {\[B\]}^{< {\aleph}_{1}}$ is stationary and ${S}_{j}' \subseteq {\[A\]}^{< {\aleph}_{1}}$ is stationary because ${\bigcup}_{i < l}{{T}_{i}'} = T'$ and ${\bigcup}_{i < l}{{S}_{i}'} = S'$.
  Since ${T}_{i}' \subseteq \KK(c, i, M, B, T) \subseteq {\[B\]}^{< {\aleph}_{1}}$ and ${S}_{j}' \subseteq \KK(c, j, K, A, S) \subseteq {\[A\]}^{< {\aleph}_{1}}$, $M$ is $i$-large in $\pr{B}{T}$ w.r.t.\@ $c$ and $K$ is $j$-large in $\pr{A}{S}$ w.r.t.\@ $c$. 
\end{proof}
It is obvious from the definition that the property of being $\pr{i}{j}$-saturated is hereditary.
We state this below as a separate fact because it will be useful, but we will omit the trivial proof.
\begin{Lemma} \label{lem:hereditary}
  Suppose $\pr{A'}{S'} \leq \pr{A}{S} \leq \pr{{A}_{0}}{{S}_{0}}$ and $\pr{B'}{T'} \leq \pr{B}{T} \leq \pr{{A}_{0}}{{S}_{0}}$.
  For any $\pr{i}{j} \in l \times l$, if $\pr{\pr{A}{S}}{\pr{B}{T}}$ is $\pr{i}{j}$-saturated, then so is $\pr{\pr{A'}{S'}}{\pr{B'}{T'}}$.
\end{Lemma}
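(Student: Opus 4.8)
The plan is to simply unwind Definition \ref{def:coloring} and invoke the transitivity of the order $\leq$ on ${\Q}_{< \delta}$, which was recorded in Definition \ref{def:countabletower}. Fix $\pr{i}{j} \in l \times l$ and assume that $\pr{\pr{A}{S}}{\pr{B}{T}}$ is $\pr{i}{j}$-saturated w.r.t.\@ $c$. To show that $\pr{\pr{A'}{S'}}{\pr{B'}{T'}}$ is $\pr{i}{j}$-saturated w.r.t.\@ $c$, I would take an arbitrary pair of conditions $\pr{A''}{S''} \leq \pr{A'}{S'}$ and $\pr{B''}{T''} \leq \pr{B'}{T'}$ and verify clauses (1) and (2) of Definition \ref{def:coloring} for this pair.

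The key observation is that the hypotheses $\pr{A'}{S'} \leq \pr{A}{S}$ and $\pr{B'}{T'} \leq \pr{B}{T}$ together with transitivity of $\leq$ give $\pr{A''}{S''} \leq \pr{A}{S}$ and $\pr{B''}{T''} \leq \pr{B}{T}$. Hence $\pr{A''}{S''}$ and $\pr{B''}{T''}$ are among the pairs of conditions over which the definition of $\pr{i}{j}$-saturation of $\pr{\pr{A}{S}}{\pr{B}{T}}$ quantifies, so clause (1) there yields that $\left\{ M \in S'': M \ \text{is} \ i\text{-large in} \ \pr{B''}{T''} \ \text{w.r.t.\@} \ c \right\}$ is stationary in ${\[A''\]}^{< {\aleph}_{1}}$, and clause (2) yields that $\left\{ K \in T'': K \ \text{is} \ j\text{-large in} \ \pr{A''}{S''} \ \text{w.r.t.\@} \ c \right\}$ is stationary in ${\[B''\]}^{< {\aleph}_{1}}$. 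These are precisely the two conditions required of the pair $\pr{\pr{A'}{S'}}{\pr{B'}{T'}}$ for the witnesses $\pr{A''}{S''}$ and $\pr{B''}{T''}$, and since those witnesses were arbitrary, this completes the verification.

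There is essentially no obstacle here: no use of the theory of stationary sets, of the map $F$, or of elementary submodels is needed. The only point being used is that the range of the universal quantifier appearing in the definition of saturation is downward closed under $\leq$, which is immediate from the fact that $\leq$ is a partial order on ${\Q}_{< \delta}$. This is why the paper is content to omit the proof.
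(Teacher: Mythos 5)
Your argument is correct and is precisely the ``trivial proof'' the paper omits: unwinding Definition \ref{def:coloring} and using transitivity of $\leq$ on ${\Q}_{< \delta}$ so that any $\pr{A''}{S''} \leq \pr{A'}{S'}$ and $\pr{B''}{T''} \leq \pr{B'}{T'}$ also lie below $\pr{A}{S}$ and $\pr{B}{T}$, respectively. Nothing further is needed.
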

The next lemma will play an important role in the rest of the proof.
It asserts the existence of a single pair of colors $\pr{i}{j}$ and a condition in ${\Q}_{< \delta}$ with the property that every condition below it in ${\Q}_{< \delta}$ can be split into an $\pr{i}{j}$-saturated pair.
The proof is an exhaustion argument. 
\begin{Lemma} \label{lem:existssaturated}
  There exist $\pr{i}{j} \in l \times l$ and $\pr{{A}_{1}}{{S}_{1}} \leq \pr{{A}_{0}}{{S}_{0}}$ such that for any $\pr{{A}_{2}}{{S}_{2}} \leq \pr{{A}_{1}}{{S}_{1}}$, there exist $\pr{A}{S} \leq \pr{{A}_{2}}{{S}_{2}}$ and $\pr{B}{T} \leq \pr{{A}_{2}}{{S}_{2}}$ such that $\pr{\pr{A}{S}}{\pr{B}{T}}$ is $\pr{i}{j}$-saturated w.r.t.\@ $c$.
\end{Lemma}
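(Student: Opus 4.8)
The plan is to argue by contradiction, using an exhaustion over the finitely many pairs of colors to isolate a single condition below which no pair of conditions is saturated, and then a second exhaustion to build a ``rectangle'' in which no color can be large, against Lemma~\ref{lem:oneislarge}.

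For $\pr{i}{j} \in l \times l$, call a condition $q \leq \pr{A_0}{S_0}$ \emph{$\pr{i}{j}$-splittable} if there are $\pr{A}{S}, \pr{B}{T} \leq q$ with $\pr{\pr{A}{S}}{\pr{B}{T}}$ being $\pr{i}{j}$-saturated w.r.t.\@ $c$; the lemma asserts that for some $\pr{i}{j}$ and some $\pr{A_1}{S_1}$ every condition below $\pr{A_1}{S_1}$ is $\pr{i}{j}$-splittable. Suppose this fails. A saturated pair lying below some $q' \leq q$ also lies below $q$, so the collection $G_{ij}$ of conditions below $\pr{A_0}{S_0}$ that are \emph{not} $\pr{i}{j}$-splittable is downward closed, and the failure of the lemma says precisely that each $G_{ij}$ is dense below $\pr{A_0}{S_0}$. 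Enumerate $l \times l$ as $\pr{i_0}{j_0}, \dotsc, \pr{i_{n-1}}{j_{n-1}}$. By repeated appeal to density I would build a $\leq$-decreasing sequence $\pr{A_0}{S_0} = p_0 \geq p_1 \geq \dotsb \geq p_n$ with $p_{k+1} \in G_{i_k j_k}$; since each $G_{ij}$ is downward closed, $p^* := p_n$ lies in all of them. Hence $p^* \leq \pr{A_0}{S_0}$ has the property that \emph{for all} conditions $\pr{A}{S}, \pr{B}{T} \leq p^*$ and \emph{all} $\pr{i}{j} \in l \times l$, the pair $\pr{\pr{A}{S}}{\pr{B}{T}}$ is not $\pr{i}{j}$-saturated w.r.t.\@ $c$. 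Note that this imposes no compatibility requirement on $\pr{A}{S}$ and $\pr{B}{T}$.

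Now I would run a second exhaustion to build $\leq$-decreasing sequences of conditions $\pr{C_k}{U_k}$ and $\pr{D_k}{W_k}$ for $k \leq n$, starting from $\pr{C_0}{U_0} = \pr{D_0}{W_0} = p^*$, maintaining the invariant that for each $\ell < k$ either no $M \in U_k$ is $i_\ell$-large in $\pr{D_k}{W_k}$ w.r.t.\@ $c$, or no $K \in W_k$ is $j_\ell$-large in $\pr{C_k}{U_k}$ w.r.t.\@ $c$. At stage $k$, since $\pr{C_k}{U_k}, \pr{D_k}{W_k} \leq p^*$, the pair $\pr{\pr{C_k}{U_k}}{\pr{D_k}{W_k}}$ is not $\pr{i_k}{j_k}$-saturated, so there are $\pr{A'}{S'} \leq \pr{C_k}{U_k}$ and $\pr{B'}{T'} \leq \pr{D_k}{W_k}$ violating clause (1) or clause (2) of Definition~\ref{def:coloring} for the pair $\pr{i_k}{j_k}$. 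If clause (1) fails, the set of $M \in S'$ that are $i_k$-large in $\pr{B'}{T'}$ w.r.t.\@ $c$ is non-stationary, so by Theorem~\ref{thm:normality} the complementary set $\widehat{S}$ of those $M \in S'$ that are not $i_k$-large in $\pr{B'}{T'}$ w.r.t.\@ $c$ is stationary, and I put $\pr{C_{k+1}}{U_{k+1}} = \pr{A'}{\widehat{S}}$, $\pr{D_{k+1}}{W_{k+1}} = \pr{B'}{T'}$; if clause (2) fails I do the mirror-image construction, thinning the second condition instead. A short computation using Lemma~\ref{lem:largenessprojects}, together with the observation that whether $M$ is $i$-large in a condition depends on $M$ only through $F(M)$ and passes from a condition to any $\leq$-larger one, shows that the data recorded at each earlier stage survives simultaneous shrinking of both sides, so the invariant is maintained and the pair $\pr{i_k}{j_k}$ is handled at stage $k$.

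After $n$ stages, $\pr{C_n}{U_n}, \pr{D_n}{W_n} \leq p^* \leq \pr{A_0}{S_0}$, and for \emph{every} $\pr{i}{j} \in l \times l$ either no $M \in U_n$ is $i$-large in $\pr{D_n}{W_n}$ w.r.t.\@ $c$, or no $K \in W_n$ is $j$-large in $\pr{C_n}{U_n}$ w.r.t.\@ $c$. Fix any $M \in U_n$ and $K \in W_n$, which exist because $U_n$ and $W_n$ are stationary, and apply Lemma~\ref{lem:oneislarge} to $\pr{C_n}{U_n}$ and $\pr{D_n}{W_n}$: it produces a pair $\pr{i}{j} \in l \times l$ with $M$ being $i$-large in $\pr{D_n}{W_n}$ and $K$ being $j$-large in $\pr{C_n}{U_n}$ w.r.t.\@ $c$, in flat contradiction with the previous sentence. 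This proves the lemma. The routine ingredients — that the complement in a stationary set of a non-stationary set is stationary, and that the largeness of a fixed row or column is inherited when both sides of the rectangle shrink — are immediate from Theorem~\ref{thm:normality} and Lemma~\ref{lem:largenessprojects}. The step that I expect to require the most care, and that carries the real content of the argument, is the first exhaustion: distilling from the bare negation of the statement a \emph{single} condition $p^*$ below which no pair of conditions is $\pr{i}{j}$-saturated for \emph{any} $\pr{i}{j}$. This succeeds because $\pr{i}{j}$-splittability of $q$ refers only to conditions below $q$, which forces the non-splittable conditions to be downward closed, and because the rectangle built in the second exhaustion never requires its two sides to have a common lower bound.
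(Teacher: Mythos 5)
Your proof is correct and follows essentially the same route as the paper: assuming failure, a first exhaustion over the $l^2$ color pairs produces a single condition below which no pair of conditions is $\pr{i}{j}$-saturated for any $\pr{i}{j}$, a second exhaustion builds a decreasing ``rectangle'' killing largeness for every pair, and Lemma \ref{lem:oneislarge} (via Lemma \ref{lem:largenessprojects}) gives the contradiction. The only difference is bookkeeping: you discard the non-stationary set of large elements at each stage and carry an invariant downward, whereas the paper records these non-stationary sets and removes their lifts all at once at the bottom of the second sequence -- logically the same argument.
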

\begin{proof}
Since $l > 0$, we can enumerate the members of $l \times l$ as $\left\{ \pr{{i}_{1}}{{j}_{1}}, \dotsc, \pr{{i}_{{l}^{2}}}{{j}_{{l}^{2}}}\right\}$.
Suppose that the statement of the lemma fails.
Then there exists a sequence $\pr{{A}_{0}}{{S}_{0}} \geq \pr{{A}_{1}}{{S}_{1}} \geq \dotsb \geq \pr{{A}_{{l}^{2}}}{{S}_{{l}^{2}}}$ such that for each $1 \leq k \leq {l}^{2}$, $\pr{{A}_{k}}{{S}_{k}}$ has the property that for any $\pr{A}{S} \leq \pr{{A}_{k}}{{S}_{k}}$ and for any $\pr{B}{T} \leq \pr{{A}_{k}}{{S}_{k}}$, $\pr{\pr{A}{S}}{\pr{B}{T}}$ is not $\pr{{i}_{k}}{{j}_{k}}$-saturated w.r.t.\@ $c$.
Next build three sequences
\begin{align*}
 &\pr{{A}_{{l}^{2}}}{{S}_{{l}^{2}}} = \pr{{A}_{0}'}{{S}_{0}'} \geq \pr{{A}_{1}'}{{S}_{1}'} \geq \dotsb \geq \pr{{A}_{{l}^{2}}'}{{S}_{{l}^{2}}'}, \\
 &\pr{{A}_{{l}^{2}}}{{S}_{{l}^{2}}} = \pr{{B}_{0}'}{{T}_{0}'} \geq \pr{{B}_{1}'}{{T}_{1}'} \geq \dotsb \geq \pr{{B}_{{l}^{2}}'}{{T}_{{l}^{2}}'}\text{, and}\\
 &\pr{{S}^{\ast}_{1}}{{T}^{\ast}_{1}}, \dotsc, \pr{{S}^{\ast}_{{l}^{2}}}{{T}^{\ast}_{{l}^{2}}} \ \text{such that:}
\end{align*} 
\begin{enumerate}
  \item
  for each $1 \leq k \leq {l}^{2}$, ${S}^{\ast}_{k} \subseteq {S}_{k}'$ is non-stationary in ${\[{A}_{k}'\]}^{< {\aleph}_{1}}$ and ${T}^{\ast}_{k} \subseteq {T}_{k}'$ is non-stationary in ${\[{B}_{k}'\]}^{< {\aleph}_{1}}$.
  \item
  for each $1 \leq  k \leq {l}^{2}$,
 \begin{align*}
  &\text{\emph{either}} \ {S}^{\ast}_{k} = \left\{ M \in {S}_{k}': M \ \text{is} \ {i}_{k}\text{-large in} \ \pr{{B}_{k}'}{{T}_{k}'} \ \text{w.r.t.\@} \ c \right\}\\
  &\text{\emph{or}} \ {T}^{\ast}_{k} = \left\{ K \in {T}_{k}': K \ \text{is} \ {j}_{k}\text{-large in} \ \pr{{A}_{k}'}{{S}_{k}'} \ \text{w.r.t.\@} \ c \right\}.
 \end{align*} 
\end{enumerate}
Suppose for a moment that this has been accomplished.
Then for each $1 \leq k \leq {l}^{2}$, ${\left( {S}^{\ast}_{k} \right)}^{\uparrow {A}_{{l}^{2}}'}$ is non-stationary in ${\[{A}_{{l}^{2}}'\]}^{< {\aleph}_{1}}$ and ${\left( {T}^{\ast}_{k} \right)}^{\uparrow {B}_{{l}^{2}}'}$ is non-stationary in ${\[{B}_{{l}^{2}}'\]}^{< {\aleph}_{1}}$.
Therefore if ${S}^{\ast} = {S}_{{l}^{2}}' \setminus \left( {\bigcup}_{1 \leq k \leq {l}^{2}}{{\left( {S}^{\ast}_{k} \right)}^{\uparrow {A}_{{l}^{2}}'}} \right)$, then $\pr{{A}_{{l}^{2}}'}{{S}^{\ast}} \leq \pr{{A}_{{l}^{2}}'}{{S}_{{l}^{2}}'} \leq \pr{{A}_{0}}{{S}_{0}}$, and if ${T}^{\ast} = {T}_{{l}^{2}}' \setminus \left( {\bigcup}_{1 \leq k \leq {l}^{2}}{{\left( {T}^{\ast}_{k} \right)}^{\uparrow {B}_{{l}^{2}}'}}\right)$, then $\pr{{B}_{{l}^{2}}'}{{T}^{\ast}} \leq \pr{{B}_{{l}^{2}}'}{{T}_{{l}^{2}}'} \leq \pr{{A}_{0}}{{S}_{0}}$.
Choose ${M}^{\ast} \in {S}^{\ast}$ and ${K}^{\ast} \in {T}^{\ast}$.
Apply Lemma \ref{lem:oneislarge} to find $1 \leq k \leq {l}^{2}$ such that ${M}^{\ast}$ is ${i}_{k}$-large in $\pr{{B}_{{l}^{2}}'}{{T}^{\ast}}$ w.r.t.\@ $c$ and ${K}^{\ast}$ is ${j}_{k}$-large in $\pr{{A}_{{l}^{2}}'}{{S}^{\ast}}$ w.r.t.\@ $c$.
Note that ${M}^{\ast} \cap {A}_{k}' \in {S}_{k}' \setminus {S}^{\ast}_{k}$ and ${K}^{\ast} \cap {B}_{k}' \in {T}_{k}' \setminus {T}^{\ast}_{k}$.
By Lemma \ref{lem:largenessprojects}, ${M}^{\ast} \cap {A}_{k}'$ is ${i}_{k}$-large in $\pr{{B}_{k}'}{{T}_{k}'}$ w.r.t.\@ $c$ and ${K}^{\ast} \cap {B}_{k}'$ is ${j}_{k}$-large in $\pr{{A}_{k}'}{{S}_{k}'}$ w.r.t.\@ $c$.
However these facts contradict (2) because they imply that ${S}^{\ast}_{k} \neq \left\{ M \in {S}_{k}': M \ \text{is} \ {i}_{k}\text{-large in} \ \pr{{B}_{k}'}{{T}_{k}'} \ \text{w.r.t.\@} \ c \right\}$ and ${T}^{\ast}_{k} \neq \left\{ K \in {T}_{k}': K \ \text{is} \ {j}_{k}\text{-large in} \ \pr{{A}_{k}'}{{S}_{k}'} \ \text{w.r.t.\@} \ c \right\}$.

To construct such sequences, proceed by induction.
To start, let $\pr{{A}_{0}'}{{S}_{0}'} = \pr{{A}_{{l}^{2}}}{{S}_{{l}^{2}}} = \pr{{B}_{0}'}{{T}_{0}'}$.
Now suppose that $0 \leq k < k + 1 \leq {l}^{2}$ and that $\pr{{A}_{k}'}{{S}_{k}'} \leq \pr{{A}_{{l}^{2}}}{{S}_{{l}^{2}}}$ and $\pr{{B}_{k}'}{{T}_{k}'} \leq \pr{{A}_{{l}^{2}}}{{S}_{{l}^{2}}}$ are given.
Then $\pr{{A}_{k}'}{{S}_{k}'} \leq \pr{{A}_{k + 1}}{{S}_{k + 1}}$ and $\pr{{B}_{k}'}{{T}_{k}'} \leq \pr{{A}_{k + 1}}{{S}_{k + 1}}$.
By the choice of $\pr{{A}_{k + 1}}{{S}_{k + 1}}$, $\pr{\pr{{A}_{k}'}{{S}_{k}'}}{\pr{{B}_{k}'}{{T}_{k}'}}$ is not $\pr{{i}_{k + 1}}{{j}_{k + 1}}$-saturated w.r.t.\@ $c$.
Therefore we can find $\pr{{A}_{k + 1}'}{{S}_{k + 1}'} \leq \pr{{A}_{k}'}{{S}_{k}'}$ and $\pr{{B}_{k + 1}'}{{T}_{k + 1}'} \leq \pr{{B}_{k}'}{{T}_{k}'}$ such that \emph{either} 
\begin{align*}
\left\{ M \in {S}_{k + 1}': M \ \text{is} \ {i}_{k + 1}\text{-large in} \ \pr{{B}_{k + 1}'}{{T}_{k + 1}'} \ \text{w.r.t.\@} \ c \right\}
\end{align*}
is non-stationary in ${\[{A}_{k + 1}'\]}^{< {\aleph}_{1}}$ \emph{or}
\begin{align*}
\left\{ K \in {T}_{k + 1}': K \ \text{is} \ {j}_{k + 1}\text{-large in} \ \pr{{A}_{k + 1}'}{{S}_{k + 1}'} \ \text{w.r.t.\@} \ c \right\}
\end{align*}
is non-stationary in ${\[{B}_{k + 1}'\]}^{< {\aleph}_{1}}$.
If the first alternative happens, then define
\begin{align*}
{S}^{\ast}_{k + 1} = \left\{ M \in {S}_{k + 1}': M \ \text{is} \ {i}_{k + 1}\text{-large in} \ \pr{{B}_{k + 1}'}{{T}_{k + 1}'} \ \text{w.r.t.\@} \ c \right\} \subseteq {S}_{k + 1}'
\end{align*}
and ${T}^{\ast}_{k + 1} = \emptyset$, while if the second alternative occurs, then define ${S}^{\ast}_{k + 1} = \emptyset$ and
\begin{align*}
{T}^{\ast}_{k + 1} = \left\{ K \in {T}_{k + 1}': K \ \text{is} \ {j}_{k + 1}\text{-large in} \ \pr{{A}_{k + 1}'}{{S}_{k + 1}'} \ \text{w.r.t.\@} \ c \right\} \subseteq {T}_{k + 1}'.
\end{align*}
It is clear that $\pr{{A}_{k + 1}'}{{S}_{k + 1}'}$, $\pr{{B}_{k + 1}'}{{T}_{k + 1}'}$, ${S}^{\ast}_{k + 1}$, and ${T}^{\ast}_{k + 1}$ are as required.
This completes the construction and the proof.
\end{proof}
We would like to point out that in certain special circumstances, it is possible to ensure that $i = j$ in Lemma \ref{lem:existssaturated}.
Suppose for a moment that ${X}^{2}$ is a Baire space, that $c$ is Baire measurable, and that the Kuratowski-Ulam theorem is applicable in every open subset of ${X}^{2}$.
Under these circumstances, ${\Q}_{< \delta}$ may be replaced everywhere by the co-ideal of non-meager subsets of $X$.
By Baire measurability, there must be a color $i$ and open sets ${U}_{0}, {U}_{1} \subseteq X$ such that the $i$-colored points are comeager relative to ${U}_{0} \times {U}_{1}$.
By Kuratowski-Ulam, almost all the points in almost all vertical sections of ${U}_{0} \times {U}_{1}$ must have color $i$.
In fact, under these assumptions, the rest of our proof can be completed using the co-ideal of non-meager sets to produce a homeomorphic copy of $\Q$ that is monochromatic in the color $i$.  
This should be compared to a theorem of Todorcevic~\cite{introramsey} saying that if $c: {\[\Q\]}^{2} \rightarrow \nn$ is any continuous coloring, where $\nn$ is given the discrete topology, then there exists a monochromatic $Y \subseteq \Q$ which is homeomorphic to $\Q$.

The next lemma will only be used in the final construction.
It is a simple consequence of the fact that the non-stationary sets form a $\sigma$-ideal.
\begin{Lemma} \label{lem:countablesaturation}
 Suppose $\F \subseteq {\Q}_{< \delta}$ is a countable family so that
 \begin{align*}
  \forall \pr{B}{T} \in \F\[\pr{B}{T} \leq \pr{{A}_{0}}{{S}_{0}}\].
 \end{align*}
 Suppose $k \in l$.
 Let $\pr{A}{S} \leq \pr{{A}_{0}}{{S}_{0}}$ have the property that for any $\pr{A'}{S'} \leq \pr{A}{S}$ and for any $\pr{B}{T} \in \F$, $\left\{ M' \in S': M' \ \text{is} \ k\text{-large in} \ \pr{B}{T} \ \text{w.r.t.\@} \ c \right\}$ is stationary in ${\[A'\]}^{< {\aleph}_{1}}$.
 Then for any $\pr{A'}{S'} \leq \pr{A}{S}$,
 \begin{align*}
   \left\{ M' \in S': \exists \pr{B}{T} \in \F\[M' \ \text{is not} \ k\text{-large in} \ \pr{B}{T} \ \text{w.r.t.\@} \ c\] \right\}
 \end{align*}
 is non-stationary in ${\[A'\]}^{< {\aleph}_{1}}$.
\end{Lemma}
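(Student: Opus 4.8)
The plan is a short proof by contradiction resting on two facts that are already available: the non-stationary subsets of ${\[A'\]}^{< {\aleph}_{1}}$ form a $\sigma$-ideal (Theorem~\ref{thm:normality}), and the hypothesis of the lemma applies to \emph{every} condition below $\pr{A}{S}$ — in particular to one whose stationary part is a stationary subset of $S'$. As the excerpt already hints, there is essentially nothing more to it than combining $\sigma$-additivity of the non-stationary ideal with a single instance of the hypothesis.

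First I would fix an arbitrary $\pr{A'}{S'} \leq \pr{A}{S}$ and, aiming at a contradiction, assume that the set $W$ displayed in the statement is stationary in ${\[A'\]}^{< {\aleph}_{1}}$. Using the countability of $\F$, enumerate $\F = \left\{ \pr{{B}_{n}}{{T}_{n}}: n \in \omega \right\}$ and, for each $n \in \omega$, let ${W}_{n}$ be the set of those $M' \in S'$ that are not $k$-large in $\pr{{B}_{n}}{{T}_{n}}$ w.r.t.\@ $c$, so that $W = {\bigcup}_{n \in \omega}{{W}_{n}}$. Since the non-stationary ideal is a $\sigma$-ideal and $W$ is stationary, there is some $n \in \omega$ with ${W}_{n}$ stationary in ${\[A'\]}^{< {\aleph}_{1}}$. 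Because ${W}_{n} \subseteq S' \subseteq {\[A'\]}^{< {\aleph}_{1}}$, the pair $\pr{A'}{{W}_{n}}$ is a condition in ${\Q}_{< \delta}$, and since ${\left( {W}_{n} \right)}_{\downarrow A'} = {W}_{n} \subseteq S'$, the characterization of $\leq$ in Definition~\ref{def:countabletower} gives $\pr{A'}{{W}_{n}} \leq \pr{A'}{S'} \leq \pr{A}{S}$. Now apply the hypothesis of the lemma with $\pr{A'}{{W}_{n}}$ in the role of $\pr{A'}{S'}$ and with the particular condition $\pr{{B}_{n}}{{T}_{n}} \in \F$: it yields that $\left\{ M' \in {W}_{n}: M' \ \text{is} \ k\text{-large in} \ \pr{{B}_{n}}{{T}_{n}} \ \text{w.r.t.\@} \ c \right\}$ is stationary in ${\[A'\]}^{< {\aleph}_{1}}$, and in particular non-empty. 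This contradicts the definition of ${W}_{n}$, since no member of ${W}_{n}$ is $k$-large in $\pr{{B}_{n}}{{T}_{n}}$ w.r.t.\@ $c$.

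I do not expect any genuine obstacle. The only step that needs a moment's attention is passing from the stationary piece ${W}_{n}$ to the condition $\pr{A'}{{W}_{n}}$ lying below $\pr{A'}{S'}$, and this is immediate from the ordering on ${\Q}_{< \delta}$; it is exactly the countability of $\F$, together with the $\sigma$-additivity of the non-stationary ideal, that makes such a ${W}_{n}$ available in the first place.
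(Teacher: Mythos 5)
Your proof is correct and follows essentially the same route as the paper: both arguments assume the bad set is stationary, use the countability of $\F$ together with $\sigma$-additivity of the non-stationary ideal to isolate a single $\pr{B}{T} \in \F$ that fails $k$-largeness on a stationary subset of $S'$, note that this stationary subset paired with $A'$ is a condition below $\pr{A}{S}$, and then apply the hypothesis to that condition to contradict the choice of the subset. The only cosmetic difference is that you decompose the bad set as a countable union indexed by an enumeration of $\F$, whereas the paper fixes a witness $\pr{{B}_{M'}}{{T}_{M'}}$ for each bad $M'$ and applies the same countable pigeonhole; the content is identical.
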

\begin{proof}
  We argue by contradiction.
  If there exists an $\pr{A'}{S'} \leq \pr{A}{S}$ for which the statement of the lemma fails, then there exists a set $S'' \subseteq S'$ which is stationary in ${\[A'\]}^{< {\aleph}_{1}}$ and has the property that for any $M' \in S''$, there exists $\pr{{B}_{M'}}{{T}_{M'}} \in \F$ such that $M'$ is not $k$-large in $\pr{{B}_{M'}}{{T}_{M'}}$ w.r.t.\@ $c$.
  Since $\F$ is a countable set, it follows that there exists $\pr{B}{T} \in \F$ so that ${S}^{\ast} = \{M' \in S'': \pr{B}{T} = \pr{{B}_{M'}}{{T}_{M'}} \}$ is stationary in ${\[A'\]}^{< {\aleph}_{1}}$.
  Thus $\pr{A'}{{S}^{\ast}} \leq \pr{A}{S}$ and so by the hypothesis on $\pr{A}{S}$, $\left\{ M' \in {S}^{\ast}: M' \ \text{is} \ k\text{-large in} \ \pr{B}{T} \ \text{w.r.t.\@} \ c \right\}$ is stationary in ${\[A'\]}^{< {\aleph}_{1}}$.
  In particular this set is non-empty, which contradicts the choice of $S''$, concluding the proof.
\end{proof}
In view of Lemma \ref{lem:existssaturated}, we fix for the remainder of this section pairs $\pr{i}{j} \in l \times l$ and $\pr{{A}_{1}}{{S}_{1}} \leq \pr{{A}_{0}}{{S}_{0}}$ such that for any $\pr{{A}_{2}}{{S}_{2}} \leq \pr{{A}_{1}}{{S}_{1}}$, there exist $\pr{A}{S} \leq \pr{{A}_{2}}{{S}_{2}}$ and $\pr{B}{T} \leq \pr{{A}_{2}}{{S}_{2}}$ such that $\pr{\pr{A}{S}}{\pr{B}{T}}$ is $\pr{i}{j}$-saturated w.r.t.\@ $c$.
We will ensure that all the pairs in the homeomorphic copy of $\Q$ which we are going to construct inside $X$ are colored either $i$ or $j$.

Lemma \ref{lem:mainpdl} is another application of the pressing down lemma.
Lemma \ref{lem:mainsplitting} is proved using Lemmas \ref{lem:mainpdl} and \ref{lem:existssaturated}.
Item (2) of Lemma \ref{lem:mainsplitting} is implied by item (1), but it is stated for emphasis.
\begin{Lemma} \label{lem:mainpdl}
  Suppose $\pr{A}{S} \leq \pr{{A}_{0}}{{S}_{0}}$.
  For each $n \in \omega$, there exists $U$ so that $\{M' \in S: U = {U}_{F(M'), n}\}$ is stationary in ${\[A\]}^{< {\aleph}_{1}}$.
\end{Lemma}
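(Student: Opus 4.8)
The plan is to descend from $\pr{A}{S}$ to a condition whose members are countable elementary submodels, so that the assignment $M' \mapsto {U}_{F(M'), n}$ becomes regressive and the pressing down lemma can be applied. \textbf{Step 1.} First I would invoke Lemma \ref{lem:gooddense} to fix $\pr{B}{T} \leq \pr{A}{S}$ such that $B = H(\chi)$ for some uncountable regular cardinal $\chi$, and such that every $K \in T$ satisfies $\lc K \rc = {\aleph}_{0}$, $K \prec H(\chi)$, $\pr{X}{\TT}, \BB \in K$, and $\overline{X \cap K} \setminus K \neq 0$. \textbf{Step 2.} Fix $K \in T$ and put $x = F(K) = F(K \cap {A}_{0})$. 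By the defining property of $F$ from Definition \ref{def:function}, $x \in \overline{K \cap {A}_{0}} \setminus (K \cap {A}_{0}) \subseteq \overline{X \cap K}$; and since $x \in X = {A}_{0}$, the fact that $x \notin K \cap {A}_{0}$ forces $x \notin K$. Hence $x \in \overline{X \cap K} \setminus K$, so Lemma \ref{lem:elementary1} gives ${\BB}_{\{x\}} \subseteq K$, and in particular ${U}_{F(K), n} = {U}_{x, n} \in {\BB}_{\{x\}} \subseteq K$.

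\textbf{Step 3.} Now the function $K \mapsto {U}_{F(K), n}$ is regressive on the stationary set $T \subseteq {\[B\]}^{< {\aleph}_{1}}$, so by the pressing down lemma (Theorem \ref{thm:normality}) there is a single $U$ for which $T' = \left\{ K \in T : {U}_{F(K), n} = U \right\}$ is stationary in ${\[B\]}^{< {\aleph}_{1}}$. \textbf{Step 4.} Finally, since $A \subseteq B$, Theorem \ref{thm:statprojection}(2) yields that ${T'}_{\downarrow A} = \left\{ K \cap A : K \in T' \right\}$ is stationary in ${\[A\]}^{< {\aleph}_{1}}$. For $K \in T'$, write $M' = K \cap A$; then $M' \in S$ because $\pr{B}{T} \leq \pr{A}{S}$, and $F(M') = F(M' \cap {A}_{0}) = F(K \cap {A}_{0}) = F(K)$ since ${A}_{0} \subseteq A$, so ${U}_{F(M'), n} = U$. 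Thus ${T'}_{\downarrow A} \subseteq \left\{ M' \in S : {U}_{F(M'), n} = U \right\}$, and the latter set is stationary in ${\[A\]}^{< {\aleph}_{1}}$, which is exactly the conclusion.

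The only genuine subtlety is Step 1: the function $M' \mapsto {U}_{F(M'), n}$ is not regressive on an arbitrary condition $\pr{A}{S}$, because ${U}_{F(M'), n}$ need not belong to $M'$. Passing to a dense set of conditions consisting of elementary submodels that see $\BB$ and contain a point of $\overline{X \cap M'} \setminus M'$ — which is precisely what the density clause of Lemma \ref{lem:gooddense} combined with Lemma \ref{lem:elementary1} provides — is what repairs this. Everything after that is a routine combination of the pressing down lemma with the projection of stationary sets under $M \mapsto M \cap A$.
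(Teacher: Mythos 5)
Your proof is correct and follows essentially the same route as the paper: descend via Lemma \ref{lem:gooddense} to a condition of countable elementary submodels, use Lemma \ref{lem:elementary1} to make $K \mapsto {U}_{F(K),n}$ regressive, apply the pressing down lemma, and project the resulting stationary set down to ${\[A\]}^{< {\aleph}_{1}}$. The extra details you spell out (that $F(K) \notin K$ and that $F(K \cap A) = F(K)$) are exactly the facts the paper uses implicitly, so there is nothing to add.
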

\begin{proof}
   By Lemma \ref{lem:gooddense}, there exists $\pr{B}{T} \leq \pr{A}{S}$ with the property that there exists an uncountable regular cardinal $\chi$ such that $B = H(\chi)$ and for each $M \in T$, $\lc M \rc = {\aleph}_{0}$, $M \prec H(\chi)$, and $\pr{X}{\TT}, \BB \in M$.
   Consider any $M \in T$.
   Then $F(M) \in \overline{M \cap X} \setminus M$.
   So by Lemma \ref{lem:elementary1}, ${\BB}_{\left\{F(M)\right\}} \subseteq M$.
   In particular, $ {U}_{F(M), n} \in M$.
   Thus by the pressing down lemma there exists $U$ such that
   \begin{align*}
    T' = \{M \in T: U = {U}_{F(M), n} \} \subseteq {\[B\]}^{< {\aleph}_{1}}
   \end{align*}
   is stationary.
   So ${T}_{\downarrow A}' \subseteq {\[A\]}^{< {\aleph}_{1}}$ is stationary.
   Since
   \begin{align*}
    {T}_{\downarrow A}' \subseteq \{M' \in S: U = {U}_{F(M'), n}\} \subseteq S \subseteq {\[A\]}^{< {\aleph}_{1}},
   \end{align*}
   $\{M' \in S: U = {U}_{F(M'), n}\}$ is also stationary in ${\[A\]}^{< {\aleph}_{1}}$. 
\end{proof}
\begin{Lemma} \label{lem:mainsplitting}
  Suppose $\pr{A}{S} \leq \pr{{A}_{1}}{{S}_{1}}$ and $n \in \omega$.
  There exist $\pr{A'}{S'}, \pr{B'}{T'} \leq \pr{A}{\widetilde{S}}$ and there exists $U$ satisfying the following:
  \begin{enumerate}
    \item
     for all $M' \in S'$, $U = {U}_{F(M'), n}$ and for all $K' \in T'$, $U = {U}_{F(K'), n}$;
    \item
    for each $M' \in S'$, $F(M') \in U$ and for each $K' \in T'$, $F(K') \in U$;
    \item
    $\pr{\pr{A'}{S'}}{\pr{B'}{T'}}$ is $\pr{i}{j}$-saturated w.r.t.\@ $c$.
  \end{enumerate}
\end{Lemma}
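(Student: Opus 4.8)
The plan is to decouple the three requirements. Conditions (1) and (2) mention only the single set $U$ and the function $F$, so I would first pin down $U$ by a pressing-down argument \emph{before} carrying out any splitting; then conditions (1) and (2) are automatically inherited by any pair of conditions lying below the pinned-down condition, and the only remaining work is to split that condition into an $\pr{i}{j}$-saturated pair, which Lemma \ref{lem:existssaturated} was designed to do.

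Concretely, I would first pass from $\pr{A}{S}$ to $\pr{A}{\widetilde{S}}$, which by Definition \ref{def:stilde} satisfies $\pr{A}{\widetilde{S}} \leq \pr{A}{S} \leq \pr{{A}_{1}}{{S}_{1}} \leq \pr{{A}_{0}}{{S}_{0}}$. Applying Lemma \ref{lem:mainpdl} to $\pr{A}{\widetilde{S}}$ and the given $n$ produces a set $U$ for which $S^{\ast} := \left\{ M' \in \widetilde{S} : U = {U}_{F(M'), n} \right\}$ is stationary in ${\[A\]}^{< {\aleph}_{1}}$; hence $\pr{A}{S^{\ast}} \leq \pr{A}{\widetilde{S}}$, and in particular $\pr{A}{S^{\ast}} \leq \pr{{A}_{1}}{{S}_{1}}$. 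Now I would invoke Lemma \ref{lem:existssaturated} with $\pr{{A}_{2}}{{S}_{2}} := \pr{A}{S^{\ast}}$ — recall that $\pr{i}{j}$ and $\pr{{A}_{1}}{{S}_{1}}$ were fixed for exactly this purpose — obtaining $\pr{A'}{S'} \leq \pr{A}{S^{\ast}}$ and $\pr{B'}{T'} \leq \pr{A}{S^{\ast}}$ with $\pr{\pr{A'}{S'}}{\pr{B'}{T'}}$ being $\pr{i}{j}$-saturated w.r.t.\@ $c$. This is clause (3), and by transitivity of $\leq$ both $\pr{A'}{S'}$ and $\pr{B'}{T'}$ are $\leq \pr{A}{\widetilde{S}}$, as the statement demands.

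It remains to verify clauses (1) and (2) for this $U$. Fix $M' \in S'$. Since $\pr{A'}{S'} \leq \pr{A}{S^{\ast}}$, the characterization of $\leq$ recorded in Definition \ref{def:countabletower} gives $M' \cap A \in S^{\ast}$. Because ${A}_{0} = X \subseteq A$, we have $(M' \cap A) \cap {A}_{0} = M' \cap {A}_{0}$, so the convention of Definition \ref{def:function} yields $F(M') = F(M' \cap {A}_{0}) = F(M' \cap A)$; consequently ${U}_{F(M'), n} = {U}_{F(M' \cap A), n} = U$, the last equality holding because $M' \cap A \in S^{\ast}$. The identical argument, now using $\pr{B'}{T'} \leq \pr{A}{S^{\ast}}$, shows ${U}_{F(K'), n} = U$ for every $K' \in T'$, establishing (1). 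Clause (2) is then immediate: by the enumeration chosen in Definition \ref{def:Bn}, ${U}_{F(M'), n}$ lies in $\left\{ V \in \BB : F(M') \in V \right\}$, so $U = {U}_{F(M'), n}$ is a neighborhood of $F(M')$, and likewise $F(K') \in U$ for each $K' \in T'$.

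The only subtlety, and the step I would single out as the crux, is the order of the two moves: Lemma \ref{lem:existssaturated} returns two \emph{a priori} unrelated conditions $\pr{A'}{S'}$ and $\pr{B'}{T'}$, while (1) insists that one and the same $U$ serve both sides. Performing the pressing-down step first and then splitting $\pr{A}{S^{\ast}}$ forces both components to project down into $S^{\ast}$, which is exactly what makes the common value $U$ simultaneously available on $S'$ and on $T'$; reversing the order would leave the two sides with possibly different values of ${U}_{F(\cdot), n}$. Everything else is routine bookkeeping around $F$ and the inclusion ${A}_{0} \subseteq A$.
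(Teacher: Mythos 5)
Your proof is correct and takes essentially the same route as the paper's: apply Lemma \ref{lem:mainpdl} to $\pr{A}{\widetilde{S}}$ to fix $U$ and the stationary set ${S}^{\ast}$, then use the fixed $\pr{{A}_{1}}{{S}_{1}}$ coming from Lemma \ref{lem:existssaturated} to split $\pr{A}{{S}^{\ast}}$ into an $\pr{i}{j}$-saturated pair, with clauses (1) and (2) inherited because both components project into ${S}^{\ast}$. The only difference is that the paper inserts an intermediate refinement ${S}^{\ast\ast}$ (chosen via a non-bad ${M}^{\ast} \in \widetilde{\left({S}^{\ast}\right)}$) to secure clause (2), a step your observation that $F(M') \in {U}_{F(M'), n}$ by Definition \ref{def:Bn} renders unnecessary.
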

\begin{proof}
 Since $\pr{A}{\widetilde{S}} \leq \pr{A}{S} \leq \pr{{A}_{1}}{{S}_{1}} \leq \pr{{A}_{0}}{{S}_{0}}$, Lemma \ref{lem:mainpdl} applies and implies that there exists $U$ so that ${S}^{\ast} = \{ M \in \widetilde{S}: U = {U}_{F(M), n} \}$ is stationary in ${\[A\]}^{< {\aleph}_{1}}$.
 So $\pr{A}{{S}^{\ast}} \leq \pr{A}{\widetilde{S}}$ and by Lemma \ref{lem:badsmall}, $\pr{A}{\widetilde{\left( {S}^{\ast} \right)}} \leq \pr{A}{{S}^{\ast}}$.
 Choose any ${M}^{\ast} \in \widetilde{\left( {S}^{\ast} \right)}$.
 Then ${S}^{\ast\ast} = \{ M \in {S}^{\ast}: F(M) \in {U}_{F({M}^{\ast}), n} = U \}$ is stationary in ${\[A\]}^{< {\aleph}_{1}}$.
 Therefore, $\pr{A}{{S}^{\ast\ast}} \leq \pr{A}{{S}^{\ast}} \leq \pr{{A}_{1}}{{S}_{1}}$, and by the choice of $\pr{{A}_{1}}{{S}_{1}}$, there exist $\pr{A'}{S'}, \pr{B'}{T'} \leq \pr{A}{{S}^{\ast\ast}}$ such that $\pr{\pr{A'}{S'}}{\pr{B'}{T'}}$ is $\pr{i}{j}$-saturated w.r.t.\@ $c$.
 It is clear that $\pr{A'}{S'}$ and $\pr{B'}{T'}$ are as required.
\end{proof}
\begin{Def} \label{def:maingame}
  Suppose $x \in {A}_{0}$ and $\pr{A}{S} \leq \pr{{A}_{1}}{{S}_{1}}$.
  We will say that \emph{$x$ is an $\pr{i}{j}$-winner in $\pr{A}{S}$} if there exists $M \in \widetilde{S}$ with $F(M) = x$ and there exists a sequence $\langle \pr{\pr{{A}_{x, n}}{{S}_{x, n}}}{\pr{{B}_{x, n}}{{T}_{x, n}}} :n \in \omega \rangle$ satisfying the following conditions:
  \begin{enumerate} 
    \item
    for each $n \in \omega$, $\pr{{A}_{x, n}}{{S}_{x, n}}, \pr{{B}_{x, n}}{{T}_{x, n}} \leq \pr{A}{\widetilde{S}}$, and
    \begin{align*}
     \pr{{A}_{x, n + 1}}{{S}_{x, n + 1}}, \pr{{B}_{x, n + 1}}{{T}_{x, n + 1}} \leq \pr{{A}_{x, n}}{{\widetilde{S}}_{x, n}};
    \end{align*}
    \item
    for each $n \in \omega$, there exists $M \in {\widetilde{S}}_{x, n}$ with $F(M) = x$, and moreover for each $M' \in {S}_{x, n}$, $F(M') \in {U}_{x, n}$ and for each $K' \in {T}_{x, n}$, $F(K') \in {U}_{x, n}$;
    \item
    for each $n \in \omega$, $\pr{\pr{{A}_{x, n}}{{S}_{x, n}}}{\pr{{B}_{x, n}}{{T}_{x, n}}}$ is $\pr{i}{j}$-saturated w.r.t.\@ $c$;
    \item
    for each $n \in \omega$, for each $K' \in {T}_{x, n}$, $F(K') \neq x$ and $c(x, F(K')) = i$.
  \end{enumerate}
\end{Def}
We would like to point out certain features of Definition \ref{def:maingame}.
Intuitively speaking, the sequence of sets $\langle {T}_{x, n}: n \in \omega \rangle$ is converging to the $\pr{i}{j}$-winner $x$.
Moreover $x$ has color $i$ with all of the points in ${T}_{x, n}$ for all $n$, and the pair $\pr{\pr{{B}_{x, k}}{{T}_{x, k}}}{\pr{{B}_{x, n}}{{T}_{x, n}}}$ is $\pr{i}{j}$-saturated for all $n < k$.
These properties of an $\pr{i}{j}$-winner are formulated and proved in Lemma \ref{lem:Bsequence}.
And they are essentially the only properties of an $\pr{i}{j}$-winner that will be used in the final construction.
Thus the condition $\pr{{A}_{x, n}}{{S}_{x, n}}$ is not directly used at all, though it is the reservoir from which the future $\pr{{B}_{x, k}}{{T}_{x, k}}$ are drawn.
Also in item (2) of Definition \ref{def:maingame}, the property that $F(M) = x$ for some $M \in {\widetilde{S}}_{x, n}$ will not be used, though it is automatically ensured by the proof that $\pr{i}{j}$-winners exist.

The next lemma is the key to the final construction.
It asserts that almost every point in any condition below $\pr{{A}_{1}}{{S}_{1}}$ is an $\pr{i}{j}$-winner in that condition.
Its proof appeals to Theorem \ref{thm:banachmazur}, and it is the only place in the proof of Theorem \ref{thm:main} where the assumption that $\delta$ is Woodin or strongly compact is essential.  
\begin{Lemma} \label{lem:mainlemma}
  For any $\pr{A}{S} \leq \pr{{A}_{1}}{{S}_{1}}$,
  \begin{align*}
   \left\{ M \in S: F(M) \ \text{is not an} \ \pr{i}{j}\text{-winner in} \ \pr{A}{S} \right\}
  \end{align*}
  is non-stationary in ${\[A\]}^{< {\aleph}_{1}}$.
\end{Lemma}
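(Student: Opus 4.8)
The plan is to argue by contradiction and extract a winning strategy for Empty in $\Game(\delta)$, contradicting Theorem~\ref{thm:banachmazur}. So suppose $\pr{A}{S} \leq \pr{{A}_{1}}{{S}_{1}}$ is such that $S^{-} = \left\{ M \in S : F(M) \ \text{is not an} \ \pr{i}{j}\text{-winner in} \ \pr{A}{S} \right\}$ is stationary in ${\[A\]}^{< {\aleph}_{1}}$. Since $S \setminus \widetilde{S}$ is non-stationary by Lemma~\ref{lem:badsmall}, the set $S^{-} \cap \widetilde{S}$ is still stationary, and $\pr{A}{S^{-} \cap \widetilde{S}} \leq \pr{A}{S} \leq \pr{{A}_{1}}{{S}_{1}}$. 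I would describe a strategy $\sigma$ for Empty with the property that in every run of $\Game(\delta)$ played according to $\sigma$ and won by Non-Empty, the point $x$ computed from Non-Empty's threading sequence is both an $\pr{i}{j}$-winner in $\pr{A}{S}$ \emph{and} equal to $F(M)$ for some $M \in S^{-}$; this is impossible, so $\sigma$ would be winning for Empty, contradicting Theorem~\ref{thm:banachmazur}.

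To describe $\sigma$: Empty's moves will be exactly the ``$S$-side'' conditions $\pr{{A}_{x,n}}{{S}_{x,n}}$ of Definition~\ref{def:maingame}, while the auxiliary conditions $\pr{{B}_{x,n}}{{T}_{x,n}}$ and the neighbourhoods ${U}_{x,n}$ are computed on the side and are \emph{not} played in the game. For his first move Empty applies Lemma~\ref{lem:mainsplitting} with $n = 0$ to $\pr{A}{S^{-} \cap \widetilde{S}}$, obtaining an $\pr{i}{j}$-saturated pair $\pr{\pr{A'}{S'}}{\pr{B'}{T'}}$ below $\pr{A}{\widetilde{S^{-} \cap \widetilde{S}}}$ with all the relevant $F$-values inside a common $U$; using clause~(1) of $\pr{i}{j}$-saturation he passes to the still stationary set $S'' = \left\{ M \in S' : M \ \text{is} \ i\text{-large in} \ \pr{B'}{T'} \ \text{w.r.t.\@} \ c \right\}$, plays $\pr{A'}{\widetilde{S''}}$, and records $\pr{B'}{T'}$ and $U$ as the stage-$0$ data. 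At a later move, when the current position $\pr{\tilde{A}}{\tilde{S}}$ is below Empty's stage-$n$ condition (hence still $\leq \pr{{A}_{1}}{{S}_{1}}$), Empty repeats this using Lemma~\ref{lem:mainsplitting} at parameter $n+1$ applied to $\pr{\tilde{A}}{\tilde{S}}$: he finds an $\pr{i}{j}$-saturated pair below $\pr{\tilde{A}}{\widetilde{\tilde{S}}}$ with a common neighbourhood, shrinks its $S$-side to the (stationary) set of points $i$-large in its $B$-side, passes to the corresponding $\widetilde{(\cdot)}$, plays the result, and records the new $B$-side and neighbourhood. Clauses~(1)--(3) of Definition~\ref{def:maingame} then hold by construction: (1) because everything refines $\pr{{A}_{x,n}}{\widetilde{{S}_{x,n}}}$ and $\widetilde{(\cdot)}$ is idempotent on sets already of that form (Lemma~\ref{lem:stildeisgood}); (2) because Lemma~\ref{lem:mainsplitting}(1),(2) force the recorded ${U}_{x,n}$ to be the $n$-th basic neighbourhood of every $F$-value occurring in the stage-$n$ conditions; (3) by the choice of $\pr{i}{j}$ in Lemma~\ref{lem:mainsplitting}.

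Next I would extract the winner. Suppose $\seq{N}{l}{\in}{\omega}$ witnesses a win for Non-Empty in a run $\seq{\pr{{A}_{l}}{{S}_{l}}}{l}{\in}{\omega}$ played by $\sigma$, so ${N}_{l} \in {S}_{l}$ and ${N}_{k} = {N}_{l} \cap {A}_{k}$ whenever $k \leq l$. Put $x = F({N}_{0})$; since all the ${N}_{l}$ have the same intersection with $X$, we get $F({N}_{l}) = x$ for every $l$, and, because Empty's first move was below $\pr{A}{\widetilde{S^{-} \cap \widetilde{S}}}$, the point $x$ is realised by ${N}_{0} \cap A \in \widetilde{S} \cap S^{-}$. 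For each $n$ let ${M}_{n}$ be the projection of the thread onto Empty's stage-$n$ condition; by construction ${M}_{n}$ lies in a set of points $i$-large in the recorded pair $\pr{{B}_{n}'}{{T}_{n}'}$, so $\KK(c, i, {M}_{n}, {B}_{n}', {T}_{n}')$ is stationary. Define ${B}_{x,n} = {B}_{n}'$ and ${T}_{x,n} = \KK(c, i, {M}_{n}, {B}_{n}', {T}_{n}') = \left\{ K' \in {T}_{n}' : F(K') \neq x \ \text{and} \ c(x, F(K')) = i \right\}$. Then clause~(4) of Definition~\ref{def:maingame} holds outright, clause~(3) survives this shrinking of the $B$-side by heredity of saturation (Lemma~\ref{lem:hereditary}), and clauses~(1),(2) are unaffected; together with the witness ${N}_{0} \cap A$ this shows $x$ is an $\pr{i}{j}$-winner in $\pr{A}{S}$. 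But ${N}_{0} \cap A \in S^{-}$, so $x = F({N}_{0} \cap A)$ is \emph{not} an $\pr{i}{j}$-winner in $\pr{A}{S}$ --- a contradiction. Hence $\sigma$ is a winning strategy for Empty, contradicting Theorem~\ref{thm:banachmazur}, and the lemma follows.

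The hard part is reconciling clause~(4), which refers to the point $x$ that is not available until Non-Empty produces the threading sequence at the very end of the run, with the requirement that $\sigma$ depend only on the finite history of play. The resolution sketched above is to keep the $T$-side out of the game altogether; to pre-emptively shrink each $S$-side move to the set of points that are $i$-large in the current auxiliary $B$-side --- a set that stays stationary precisely because of the $\pr{i}{j}$-saturation secured once and for all in Lemma~\ref{lem:existssaturated} --- and only afterwards, once the thread determines $x$, to carve ${T}_{x,n}$ out of the recorded $T$-side as the $i$-coloured column over $x$, with heredity of saturation guaranteeing that this last step costs nothing. I expect this interleaving, rather than any single inequality, to be the main obstacle in turning the sketch into a full proof.
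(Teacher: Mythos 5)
Your proposal is correct and follows essentially the same route as the paper's proof: a strategy for Empty built by iterating Lemma~\ref{lem:mainsplitting}, shrinking each $S$-side to the (stationary, by clause~(1) of saturation) set of points $i$-large in the recorded $B$-side, playing its $\widetilde{(\cdot)}$, and keeping the $B$-sides and neighbourhoods as side data; then Theorem~\ref{thm:banachmazur} yields a run lost by Empty, the thread determines $x$ with ${N}_{0} \cap A$ witnessing $x \in F''S'$ non-winner, and ${T}_{x,n}$ is carved out as the $i$-coloured column over $x$, with Lemma~\ref{lem:hereditary} preserving $\pr{i}{j}$-saturation. The only differences are bookkeeping (the paper records the un-tilde'd $i$-large set as ${S}_{x,n}$, while you take the played condition and invoke idempotence of $\widetilde{(\cdot)}$ via Lemma~\ref{lem:stildeisgood}), which does not affect correctness.
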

\begin{proof}
Suppose not.
Then
\begin{align*}
 S' = \left\{ M \in \widetilde{S}: F(M) \ \text{is not an} \ \pr{i}{j}\text{-winner in} \ \pr{A}{S} \right\}
\end{align*}
is stationary in ${\[A\]}^{< {\aleph}_{1}}$.
Thus $\pr{A}{S'} \leq \pr{A}{\widetilde{S}} \leq \pr{A}{S} \leq \pr{{A}_{1}}{{S}_{1}}$.
Applying Lemma \ref{lem:mainsplitting} with $\pr{A}{S'}$ in place of $\pr{A}{S}$, choose $\pr{{C}_{0}}{{R}_{0}'}, \pr{{B}_{0}}{{T}_{0}} \leq \pr{A}{S'}$ and ${U}_{0}$ so that for each $M' \in {R}_{0}'$, $F(M') \in {U}_{0} = {U}_{F(M'), 0}$, for each $K' \in {T}_{0}$, $F(K') \in {U}_{0} = {U}_{F(K'), 0}$, and $\pr{\pr{{C}_{0}}{{R}_{0}'}}{\pr{{B}_{0}}{{T}_{0}}}$ is $\pr{i}{j}$-saturated w.r.t.\@ $c$.
In particular, ${R}_{0} = \left\{ M' \in {R}_{0}': M' \ \text{is} \ i\text{-large in} \ \pr{{B}_{0}}{{T}_{0}} \ \text{w.r.t.\@} \ c \right\}$ is stationary in ${\[{C}_{0}\]}^{< {\aleph}_{1}}$.
Now define a strategy for Empty in $\Game(\delta)$ as follows.
Suppose that $\sigma$ is a partial run of $\Game(\delta)$ with $\lc \sigma \rc = 2n$ (for some $n \in \omega$), during which Empty has followed his strategy.
If $n = 0$, then Empty plays $\pr{{C}_{0}}{{\widetilde{R}}_{0}}$.
If $n > 0$, then $\sigma(2n-1) \leq \sigma(0) = \pr{{C}_{0}}{{\widetilde{R}}_{0}} \leq \pr{{A}_{1}}{{S}_{1}}$.
Applying Lemma \ref{lem:mainsplitting} with $\sigma(2n-1)$ in place of $\pr{A}{S}$, Empty chooses $\pr{{C}_{\sigma}}{{R}_{\sigma}'}, \pr{{B}_{\sigma}}{{T}_{\sigma}} \leq \sigma(2n-1)$ and ${U}_{n}$ so that for each $M' \in {R}_{\sigma}'$, $F(M') \in {U}_{n} = {U}_{F(M'), n}$, for each $K' \in {T}_{\sigma}$, $F(K') \in {U}_{n} = {U}_{F(K'), n}$, and $\pr{\pr{{C}_{\sigma}}{{R}_{\sigma}'}}{\pr{{B}_{\sigma}}{{T}_{\sigma}}}$ is $\pr{i}{j}$-saturated w.r.t.\@ $c$.
In particular, ${R}_{\sigma} = \left\{ M' \in {R}_{\sigma}': M' \ \text{is} \ i\text{-large in} \ \pr{{B}_{\sigma}}{{T}_{\sigma}} \ \text{w.r.t.\@} \ c \right\}$ is stationary in ${\[{C}_{\sigma}\]}^{< {\aleph}_{1}}$.
Empty then plays $\pr{{C}_{\sigma}}{{\widetilde{R}}_{\sigma}} \leq \sigma(2n-1)$ as the $2n$-th move of this run.
This concludes the definition of a strategy for Empty in $\Game(\delta)$.

Since Empty does not have a winning strategy, there is a complete run of $\Game(\delta)$ in which Empty follows the strategy defined above and looses.
Therefore there exist sequences $\langle \pr{{C}_{n}}{{R}_{n}}: n \in \omega \rangle$, $\langle \pr{\pr{{C}_{2n}}{{R}_{2n}'}}{\pr{{B}_{2n}}{{T}_{2n}}}: n \in \omega \rangle$, and $\seq{U}{n}{\in}{\omega}$ satisfying the following:
\begin{enumerate}
  \item
  Non-Empty wins the run of $\Game(\delta)$ given by
  \begin{align*}
    \begin{tabular}{c| c c c c c c c}
      $\textrm{Empty}$& $\pr{{C}_{0}}{{\widetilde{R}}_{0}}$ &  & $\pr{{C}_{2}}{{\widetilde{R}}_{2}}$& & $\dotsb$ & &\\
      \hline
      $\textrm{Non-Empty}$& & $\pr{{C}_{1}}{{R}_{1}}$& & $\pr{{C}_{3}}{{R}_{3}}$ & & $\dotsb$
    \end{tabular}
  \end{align*}
  \item
  for each $n \in \omega$, for each $M' \in {R}_{2n}'$, $F(M') \in {U}_{n} = {U}_{F(M'), n}$, for each $K' \in {T}_{2n}$, $F(K') \in {U}_{n} = {U}_{F(K'), n}$, and $\pr{\pr{{C}_{2n}}{{R}_{2n}'}}{\pr{{B}_{2n}}{{T}_{2n}}}$ is $\pr{i}{j}$-saturated w.r.t.\@ $c$;
  \item
  for each $n \in \omega$, ${R}_{2n} = \left\{ M' \in {R}_{2n}': M' \ \text{is} \ i\text{-large in} \ \pr{{B}_{2n}}{{T}_{2n}} \ \text{w.r.t.\@} \ c \right\}$ is stationary in ${\[{C}_{2n}\]}^{< {\aleph}_{1}}$;
  \item
  for each $n > 0$, $\pr{{C}_{2n}}{{R}_{2n}'}, \pr{{B}_{2n}}{{T}_{2n}} \leq \pr{{C}_{2n-1}}{{R}_{2n-1}}$. 
\end{enumerate}
There is a sequence $\seq{M}{n}{\in}{\omega}$ so that
\begin{align*}
 &\forall n \in \omega\[{M}_{2n} \in {\widetilde{R}}_{2n} \ \text{and} \ {M}_{2n+1} \in {R}_{2n+1}\] \ \text{and}\\
 &\forall n \leq k < \omega\[{M}_{n} = {M}_{k} \cap {C}_{n}\]
\end{align*}
because Non-Empty wins.
Define $x = F({M}_{0})$.
Note that for any $n > 0$, $F({M}_{2n}) = F({M}_{0}) = x$.
Furthermore, ${M}_{0} \cap A \in S'$, which means that ${M}_{0} \cap A \in \widetilde{S}$ and $x = F({M}_{0} \cap A)$ is not an $\pr{i}{j}$-winner in $\pr{A}{S}$.
We will get a contradiction by showing that $x$ is an $\pr{i}{j}$-winner in $\pr{A}{S}$.

First note that if we let $M = {M}_{0} \cap A$, then $M \in \widetilde{S}$ and $x = F(M)$.
Now define a sequence $\langle \pr{\pr{{A}_{x, n}}{{S}_{x, n}}}{\pr{{B}_{x, n}}{{T}_{x, n}}}: n \in \omega \rangle$ as follows.
Fix $n \in \omega$ and define $\pr{{A}_{x, n}}{{S}_{x, n}} = \pr{{C}_{2n}}{{R}_{2n}}$.
Note that ${M}_{2n} \in {R}_{2n}$, whence ${M}_{2n} \in {R}_{2n}'$ and ${M}_{2n}$ is $i$-large in $\pr{{B}_{2n}}{{T}_{2n}}$ w.r.t.\@ $c$, which means that
\begin{align*}
 {T}_{x, n} = \{K' \in {T}_{2n}: F({M}_{2n}) \neq F(K') \ \text{and} \ c(F({M}_{2n}), F(K')) = i\}
\end{align*}
is stationary in ${\[{B}_{2n}\]}^{< {\aleph}_{1}}$.
Defining ${B}_{x, n} = {B}_{2n}$, we have that $\pr{{B}_{x, n}}{{T}_{x, n}} = \pr{{B}_{2n}}{{T}_{x, n}} \leq \pr{{B}_{2n}}{{T}_{2n}}$.
Moreover by the definition of ${T}_{x, n}$, for any $K' \in {T}_{x, n}$, $x \neq F(K')$ and $c(x, F(K')) = i$, which is what (4) of Definition \ref{def:maingame} says.
Also $\pr{\pr{{C}_{2n}}{{R}_{2n}'}}{\pr{{B}_{2n}}{{T}_{2n}}}$ is $\pr{i}{j}$-saturated w.r.t.\@ $c$, $\pr{{A}_{x, n}}{{S}_{x, n}} = \pr{{C}_{2n}}{{R}_{2n}} \leq \pr{{C}_{2n}}{{R}_{2n}'}$, and $\pr{{B}_{x, n}}{{T}_{x, n}} \leq \pr{{B}_{2n}}{{T}_{2n}}$, which implies that
\begin{align*}
 \pr{\pr{{A}_{x, n}}{{S}_{x, n}}}{\pr{{B}_{x, n}}{{T}_{x, n}}}
\end{align*}
is $\pr{i}{j}$-saturated w.r.t.\@ $c$, satisfying (3) of Definition \ref{def:maingame}.
Next, note that ${M}_{2n} \in {\widetilde{R}}_{2n} = {\widetilde{S}}_{x, n}$ and $F({M}_{2n}) = x$.
Note also that since ${M}_{2n} \in {R}_{2n}'$, ${U}_{n} = {U}_{F({M}_{2n}), n} = {U}_{x, n}$.
Moreover for any $M' \in {S}_{x, n}$, $F(M') \in {U}_{n} = {U}_{x, n}$, and for any $K' \in {T}_{x, n}$, $F(K') \in {U}_{n} = {U}_{x, n}$.
Hence (2) of Definition \ref{def:maingame} is satisfied.
Furthermore, if $n = 0$, then $\pr{{A}_{x, n}}{{S}_{x, n}} \leq \pr{{C}_{2n}}{{R}_{2n}'} = \pr{{C}_{0}}{{R}_{0}'} \leq \pr{A}{\widetilde{S}}$ and $\pr{{B}_{x, n}}{{T}_{x, n}} \leq \pr{{B}_{2n}}{{T}_{2n}} = \pr{{B}_{0}}{{T}_{0}} \leq \pr{A}{S'} \leq \pr{A}{\widetilde{S}}$.
If $n > 0$, then $\pr{{C}_{2n}}{{R}_{2n}'}, \pr{{B}_{2n}}{{T}_{2n}} \leq \pr{{C}_{2n-1}}{{R}_{2n-1}} \leq \pr{{C}_{0}}{{\widetilde{R}}_{0}} \leq \pr{A}{\widetilde{S}}$, and so $\pr{{A}_{x, n}}{{S}_{x, n}} \leq \pr{{C}_{2n}}{{R}_{2n}'} \leq \pr{A}{\widetilde{S}}$ and $\pr{{B}_{x, n}}{{T}_{x, n}} \leq \pr{{B}_{2n}}{{T}_{2n}} \leq \pr{A}{\widetilde{S}}$.
Thus $\pr{{A}_{x, n}}{{S}_{x, n}}, \pr{{B}_{x, n}}{{T}_{x, n}} \leq \pr{A}{\widetilde{S}}$ always holds.
Finally we have that $\pr{{A}_{x, n+1}}{{S}_{x, n+1}} \leq \pr{{C}_{2n+2}}{{R}_{2n+2}'} \leq \pr{{C}_{2n+1}}{{R}_{2n+1}} \leq \pr{{A}_{x, n}}{{\widetilde{S}}_{x, n}}$ and that
\begin{align*}
 \pr{{B}_{x, n+1}}{{T}_{x, n+1}} \leq \pr{{B}_{2n+2}}{{T}_{2n+2}} \leq \pr{{C}_{2n+1}}{{R}_{2n+1}} \leq \pr{{A}_{x, n}}{{\widetilde{S}}_{x, n}}.
\end{align*}
Thus $\pr{{A}_{x, n + 1}}{{S}_{x, n + 1}}, \pr{{B}_{x, n + 1}}{{T}_{x, n + 1}} \leq \pr{{A}_{x, n}}{{\widetilde{S}}_{x, n}}$ holds, and so (1) of Definition \ref{def:maingame} holds.

This concludes the verification that $x$ is an $\pr{i}{j}$-winner in $\pr{A}{S}$.
Since this yields a contradiction, the proof is complete.
\end{proof}
\begin{Lemma} \label{lem:Bsequence}
 Suppose $x \in {A}_{0}$ and that $\pr{A}{S} \leq \pr{{A}_{1}}{{S}_{1}}$.
 If $x$ is an $\pr{i}{j}$-winner in $\pr{A}{S}$, then there exists a sequence $\langle \pr{{B}_{x, n}}{{T}_{x, n}}: n \in \omega \rangle$ such that the following hold for each $n \in \omega$:
 \begin{enumerate}
   \item
   $\pr{{B}_{x, n}}{{T}_{x, n}} \leq \pr{A}{S}$;
   \item
   for each $K' \in {T}_{x, n}$, $F(K') \in {U}_{x, n}$;
   \item
   for any $n < k < \omega$, $\pr{\pr{{B}_{x, k}}{{T}_{x, k}}}{\pr{{B}_{x, n}}{{T}_{x, n}}}$ is $\pr{i}{j}$-saturated w.r.t.\@ $c$; 
   \item
   for each $K' \in {T}_{x, n}$, $F(K') \neq x$ and $c(x, F(K')) = i$.
 \end{enumerate}
\end{Lemma}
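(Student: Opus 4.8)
The plan is to take the sequence $\langle \pr{\pr{{A}_{x, n}}{{S}_{x, n}}}{\pr{{B}_{x, n}}{{T}_{x, n}}} : n \in \omega \rangle$ which witnesses, via Definition \ref{def:maingame}, that $x$ is an $\pr{i}{j}$-winner in $\pr{A}{S}$, and to show that the sequence of its ``height'' coordinates $\langle \pr{{B}_{x, n}}{{T}_{x, n}} : n \in \omega \rangle$ already satisfies the four requirements of the lemma. In other words, the lemma merely repackages Definition \ref{def:maingame}, isolating precisely the data about the $\pr{{B}_{x,n}}{{T}_{x,n}}$ that will be needed in the final construction and discarding the auxiliary conditions $\pr{{A}_{x,n}}{{S}_{x,n}}$. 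Three of the four clauses are immediate and require no work: clause (1) of the lemma holds because $\pr{{B}_{x, n}}{{T}_{x, n}} \leq \pr{A}{\widetilde{S}} \leq \pr{A}{S}$, using clause (1) of Definition \ref{def:maingame} together with the fact (Definition \ref{def:stilde}) that $\pr{A}{\widetilde{S}} \leq \pr{A}{S}$ and transitivity of $\leq$ on ${\Q}_{< \delta}$; clause (2) of the lemma is literally part of clause (2) of Definition \ref{def:maingame}; and clause (4) of the lemma is exactly clause (4) of Definition \ref{def:maingame}.

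The only clause needing an argument is clause (3), that $\pr{\pr{{B}_{x, k}}{{T}_{x, k}}}{\pr{{B}_{x, n}}{{T}_{x, n}}}$ is $\pr{i}{j}$-saturated w.r.t.\@ $c$ whenever $n < k$. First I would record that the conditions $\pr{{A}_{x, m}}{{S}_{x, m}}$ form a $\leq$-decreasing sequence: clause (1) of Definition \ref{def:maingame} gives $\pr{{A}_{x, m + 1}}{{S}_{x, m + 1}} \leq \pr{{A}_{x, m}}{{\widetilde{S}}_{x, m}} \leq \pr{{A}_{x, m}}{{S}_{x, m}}$ for every $m$, so transitivity of $\leq$ yields $\pr{{A}_{x, k}}{{S}_{x, k}} \leq \pr{{A}_{x, n}}{{S}_{x, n}}$ whenever $n \leq k$. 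The same clause also gives $\pr{{B}_{x, k}}{{T}_{x, k}} \leq \pr{{A}_{x, k-1}}{{\widetilde{S}}_{x, k-1}} \leq \pr{{A}_{x, k-1}}{{S}_{x, k-1}}$, and combining this with the previous observation we obtain $\pr{{B}_{x, k}}{{T}_{x, k}} \leq \pr{{A}_{x, n}}{{S}_{x, n}}$ for every $n < k$. Now by clause (3) of Definition \ref{def:maingame} the pair $\pr{\pr{{A}_{x, n}}{{S}_{x, n}}}{\pr{{B}_{x, n}}{{T}_{x, n}}}$ is $\pr{i}{j}$-saturated w.r.t.\@ $c$, so I would apply Lemma \ref{lem:hereditary} with $\pr{{B}_{x, k}}{{T}_{x, k}} \leq \pr{{A}_{x, n}}{{S}_{x, n}}$ in the role of the base condition and $\pr{{B}_{x, n}}{{T}_{x, n}} \leq \pr{{B}_{x, n}}{{T}_{x, n}}$ (trivially) in the role of the height condition, to conclude that $\pr{\pr{{B}_{x, k}}{{T}_{x, k}}}{\pr{{B}_{x, n}}{{T}_{x, n}}}$ is $\pr{i}{j}$-saturated w.r.t.\@ $c$. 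This establishes clause (3) and finishes the proof.

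There is no genuine obstacle here, since everything is a direct unpacking of Definition \ref{def:maingame} plus one invocation of hereditariness. The only point demanding care is bookkeeping: one must keep straight which coordinate of a pair $\pr{\cdot}{\cdot}$ plays the ``base'' role and which plays the ``height'' role when applying Lemma \ref{lem:hereditary}, and one must verify the nesting $\pr{{B}_{x,k}}{{T}_{x,k}} \leq \pr{{A}_{x,n}}{{S}_{x,n}}$ across the gap $n < k$ rather than merely for consecutive indices.
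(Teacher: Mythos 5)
Your proof is correct and takes essentially the same route as the paper's: one extracts the witnessing sequence from Definition \ref{def:maingame}, reads off clauses (1), (2), and (4) directly, and gets clause (3) by noting that $\pr{{B}_{x,k}}{{T}_{x,k}} \leq \pr{{A}_{x,n}}{{S}_{x,n}}$ for $n < k$ and then invoking the hereditariness of $\pr{i}{j}$-saturation (Lemma \ref{lem:hereditary}) applied to the saturated pair $\pr{\pr{{A}_{x,n}}{{S}_{x,n}}}{\pr{{B}_{x,n}}{{T}_{x,n}}}$. The only difference is that you make the transitivity chain across the gap $n < k$ explicit, which the paper leaves implicit.
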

\begin{proof}
  By the definition of an $\pr{i}{j}$-winner in $\pr{A}{S}$, there exists a sequence
  \begin{align*}
   \langle \pr{\pr{{A}_{x, n}}{{S}_{x, n}}}{\pr{{B}_{x, n}}{{T}_{x, n}}} :n \in \omega \rangle
  \end{align*}
  satisfying (1)--(4) of Definition \ref{def:maingame}.
  We argue that $\langle \pr{{B}_{x, n}}{{T}_{x, n}}: n \in \omega \rangle$ has the required properties.
  Indeed, from (1) of Definition \ref{def:maingame}, $\pr{{B}_{x, n}}{{T}_{x, n}} \leq \pr{A}{\widetilde{S}} \leq \pr{A}{S}$, for each $n \in \omega$.
  Next, (2) and (4) of this lemma follow from (2) and (4) of Definition \ref{def:maingame} respectively.
  Finally, for any $n \in \omega$ and for any $n < k < \omega$, $\pr{{B}_{x, k}}{{T}_{x, k}} \leq \pr{{A}_{x, n}}{{S}_{x, n}}$ by (1) of Definition \ref{def:maingame}.
  If $n \in \omega$, then $\pr{\pr{{A}_{x, n}}{{S}_{x, n}}}{\pr{{B}_{x, n}}{{T}_{x, n}}}$ is $\pr{i}{j}$-saturated w.r.t.\@ $c$, whence for any $n < k < \omega$, $\pr{\pr{{B}_{x, k}}{{T}_{x, k}}}{\pr{{B}_{x, n}}{{T}_{x, n}}}$ is $\pr{i}{j}$-saturated w.r.t.\@ $c$.
\end{proof}
\begin{Def} \label{def:leaf}
  If $P \subseteq {\omega}^{< \omega}$ is a downwards closed subtree, we say that $\sigma$ is a \emph{leaf node of $P$} if $\sigma \in P$, but there is no $m \in \omega$ for which ${\sigma}^{\frown}{\langle m \rangle} \in P$.
  $L(P)$ will denote the collection of all leaf nodes of $P$.
  $N(P)$ will denote $P \setminus L(P)$.
  Thus $P = L(P) \cup N(P)$.
  
  If $\sigma, \tau \in {\omega}^{< \omega}$ are incomparable, then
  \begin{align*}
   \Delta(\sigma, \tau) = \min\left\{ m \in \dom(\sigma) \cap \dom(\tau): \sigma(m) \neq \tau(m) \right\}.
  \end{align*}
  We say $\lex{\sigma}{\tau}$ if $\sigma$ and $\tau$ are incomparable and $\sigma(\Delta(\sigma, \tau)) < \tau(\Delta(\sigma, \tau))$.
\end{Def}
We are now ready to prove the main theorem.
We will organize the construction of the homeomorphic copy of $\Q$ by associating every node of the tree ${\omega}^{< \omega}$ to a point in the copy.
This makes certain features of the construction easier to visualize.
For instance, the points associated to the successors of a node converge to the point associated to that node.
Since the construction is inductive, the homeomorphic copy of $\Q$ is naturally well-ordered by the order in which the points are chosen.
Our scheme explicitly displays the interplay between this well-ordering and the lexicographic ordering of the tree, as well as the correspondence between this interplay and the colors $i$ and $j$.
Of course we know from Sierpi{\' n}ski's example that such a close correspondence is unavoidable.
The sequence of trees $\seq{P}{m}{\in}{\omega}$ in the proof of Theorem \ref{thm:main} below serves as a bookkeeping device ensuring that once a point has been chosen, all of its neighborhoods are eventually considered and met.   
\begin{Theorem} \label{thm:main}
  There is a non-empty countable $Y \subseteq X$ such that $Y$ is dense in itself and $c''{\[Y\]}^{2} \subseteq \{i, j\}$.
\end{Theorem}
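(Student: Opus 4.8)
\emph{Plan.} I would build points $x_\sigma \in X$ indexed by the nodes $\sigma$ of $\omega^{<\omega}$, all distinct, take $Y = \{x_\sigma : \sigma \in \omega^{<\omega}\}$, and arrange that the successors $x_{\sigma^\frown\langle m\rangle}$ converge to $x_\sigma$ (which makes the nonempty countable $Y$ dense in itself, and, being second countable since $\BB$ is point countable and $Y$ is countable, and regular, homeomorphic to $\Q$) and that $c(x_\sigma, x_\tau) \in \{i,j\}$ whenever $\sigma \neq \tau$. First I would fix, once and for all, a bijection $g\colon \omega \to \omega$ (the child $\sigma^\frown\langle m\rangle$ of a node $\sigma$ will be produced inside the $g(m)$-th blob of $x_\sigma$) and a linear order $\wo$ of $\omega^{<\omega}$ — ancestor before descendant, reverse-lexicographic at splittings — oriented so that, via $g$, it is compatible with the orientation of the internal saturation in Lemma \ref{lem:Bsequence}(3). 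As in Sierpi\'nski's example, the colour of a pair will be forced to record whether $\wo$ agrees with the order in which its two points were chosen.

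The construction is a recursion on $m \in \omega$ producing finite downward-closed trees $P_0 \subseteq P_1 \subseteq \cdots$ with $\bigcup_m P_m = \omega^{<\omega}$, points $x_\sigma$ for $\sigma \in P_m$, and for each $\sigma \in P_m$ a condition $q_\sigma \leq \pr{{A}_{1}}{{S}_{1}}$, subject to: (i) $x_\sigma$ is an $\pr{i}{j}$-winner in $q_\sigma$, so by Lemma \ref{lem:Bsequence} it carries a blob sequence refining $q_\sigma$ with the internal saturation (3) and the colour-$i$ property (4); (ii) if $\sigma \subsetneq \tau$ lie in $P_m$ then $q_\tau$ lies below the blob of $x_\sigma$ through which the branch of $\tau$ descends, so every point realised below $q_\tau$ has colour $i$ with every ancestor of $\tau$; (iii) for incomparable $\rho, \tau \in P_m$ with meet $\mu$, the conditions $q_\rho, q_\tau$ lie below distinct blobs of $x_\mu$, whence by Lemma \ref{lem:hereditary} the pair $\pr{q_\rho}{q_\tau}$ is $\pr{i}{j}$-saturated in the orientation named by $\wo$; and (iv) every already-chosen $x_\rho$ is, for the colour $\epsilon$ prescribed by $\wo$, $\epsilon$-large in each member of the countable family $\F_m$ consisting of the conditions obtainable from a blob of some $x_\mu$ ($\mu \in P_m$) by performing, in the order in which the points were chosen, the ``colour-fixing'' refinements against initial segments of the already-chosen points — the invariant whose preservation will rely on Lemma \ref{lem:countablesaturation}.

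The recursion step, driven by a bookkeeping that guarantees every node eventually acquires a child drawn from each of its blobs, adjoins one new successor $\tau = \sigma^\frown\langle m^*\rangle$ of some $\sigma \in P_m$. Pass to the $g(m^*)$-th blob $\pr{{B}_{{x}_{\sigma}, n^*}}{{T}_{{x}_{\sigma}, n^*}}$ of $x_\sigma$; it lies inside the neighbourhood ${U}_{{x}_{\sigma}, n^*}$ by Lemma \ref{lem:Bsequence}(2), which is what forces the children of $\sigma$ to converge to $x_\sigma$. Inside it perform, in the order in which points were chosen, the colour-fixing refinements against each $\rho \in P_m$; invariant (iv) is exactly what ensures the largeness needed for each such refinement is still available when it is used. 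In the condition so obtained, Lemma \ref{lem:mainlemma} (its failure set is non-stationary), a further application of Lemma \ref{lem:countablesaturation} over $\F_{m+1}$, and Lemma \ref{lem:nowhere} together yield a point $x_\tau$, distinct from all earlier points, that is an $\pr{i}{j}$-winner in a condition $q_\tau$ below everything and makes (iv) persist with $\tau$ adjoined; invariants (ii) and (iii) hold automatically since $q_\tau$ lies below the chosen blob of $x_\sigma$. Put $P_{m+1} = P_m \cup \{\tau\}$.

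At the end, $Y$ is a nonempty countable set, the convergence of successors makes it dense in itself, and for any $\sigma \neq \tau$ the one adjoined later was selected with its colour against the other equal to $i$ or $j$, so $c''{\[Y\]}^{2} \subseteq \{i, j\}$. The hard part — the step I would write out in greatest detail — is invariant (iv): that $\F_m$ is genuinely countable (finitely many nodes, each with countably many blobs, and only finitely many finite colour-fixing sequences at any finite stage); that the hereditary saturation in (iii) is precisely what supplies the hypothesis of Lemma \ref{lem:countablesaturation} needed to keep (iv) true across the step; and that performing the colour-fixing refinements in the order in which the points were chosen never outruns the largeness secured when each point was first chosen. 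Everything else — the convergence of the successors and the bookkeeping that eventually realises every neighbourhood, and the check that the $\wo$-orientation really does pin each colour down to $\{i,j\}$ — is routine by comparison; it is at invariant (iv) that the stationary-tower input (Theorem \ref{thm:banachmazur}, through Lemmas \ref{lem:mainlemma} and \ref{lem:countablesaturation}) is indispensable.
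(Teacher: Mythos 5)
Your plan is essentially the paper's own proof: the same ingredients (the fixed pair $\pr{i}{j}$ and $\pr{{A}_{1}}{{S}_{1}}$ from Lemma \ref{lem:existssaturated}, $\pr{i}{j}$-winners via Lemma \ref{lem:mainlemma}, the converging ``blob'' sequences of Lemma \ref{lem:Bsequence}, Lemma \ref{lem:countablesaturation} fed by hereditary saturation, and the Sierpi\'nski-style correspondence between choice order and lexicographic position) assembled into the same tree-indexed construction of a dense-in-itself two-coloured set. The only difference is bookkeeping: you adjoin one child at a time and defer the colour-fixing refinements, carrying a largeness invariant for already-chosen points, whereas the paper creates all successor conditions of a leaf simultaneously and eagerly refines every leaf condition at each step, carrying pairwise $\pr{i}{j}$-saturation of the leaf conditions (its invariant (7)) -- an inessential reorganization of the same argument.
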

\begin{proof}
We may choose a sequence $\seq{P}{m}{\in}{\omega}$ satisfying the following conditions:
\begin{enumerate}[series=main]
  \item
  for each $m \in \omega$, ${P}_{m} \subseteq {\omega}^{< \omega}$ is a non-empty downwards closed subtree of finite height;
  \item
  for each $m \in \omega$ there exists ${\sigma}_{m} \in L({P}_{m})$ such that
  \begin{align*}
   {P}_{m + 1} = {P}_{m} \cup \left\{ {\left( {\sigma}_{m} \right)}^{\frown}{\langle n \rangle}: n \in \omega \right\};
  \end{align*}
  \item
  ${P}_{0} = \{\emptyset\}$ and ${\omega}^{< \omega} = {\bigcup}_{n \in \omega}{{P}_{n}}$.
\end{enumerate}
It is clear that for each $m \in \omega$, $L({P}_{m + 1}) = \left( L({P}_{m}) \setminus \{{\sigma}_{m}\} \right) \cup \left\{ {\left( {\sigma}_{m} \right)}^{\frown}{\langle n \rangle}: n \in \omega \right\}$ and that $N({P}_{m + 1}) = N({P}_{m}) \cup \{{\sigma}_{m}\}$.
Also if $m < m' < \omega$, then ${\sigma}_{m} \neq {\sigma}_{m'}$ and ${\sigma}_{m} \in N({P}_{m'})$.
Finally, observe that for each $\sigma \in {\omega}^{< \omega}$, there exists $m \in \omega$ with $\sigma = {\sigma}_{m}$, and that $m + 1$ is the minimal ${m}^{\ast} \in \omega$ with $\sigma \in N({P}_{{m}^{\ast}})$.
We will construct two sequences $\langle {x}_{m+1}: m \in \omega \rangle$ and $\seq{F}{m}{\in}{\omega}$ such that the following conditions hold at each $m \in \omega$:
\begin{enumerate}[resume=main]
  \item
  ${x}_{m+1} \in X$ and ${F}_{m}: L({P}_{m}) \rightarrow {\Q}_{< \delta}$; for a $\sigma \in L({P}_{m})$, we will write $\pr{{B}_{m, \sigma}}{{T}_{m, \sigma}}$ instead of ${F}_{m}(\sigma)$;
  \item
  for each $\sigma \in L({P}_{m})$, $\pr{{B}_{m, \sigma}}{{T}_{m, \sigma}} \leq \pr{{A}_{1}}{{S}_{1}}$, and furthermore for each $m' \leq m$ and for each $\sigma \in L({P}_{m'}) \cap L({P}_{m})$, ${T}_{m, \sigma} \subseteq {T}_{m', \sigma}$; 
  \item
  for each $m' < m$ and for each $\sigma \in L({P}_{m})$, if ${\sigma}_{m'} \subsetneq \sigma$, then for each $K \in {T}_{m, \sigma}$, $F(K) \neq {x}_{m'+1}$ and $c(F(K), {x}_{m'+1}) = i$;
  if $\lex{{\sigma}_{m'}}{\sigma}$, then for each $K \in {T}_{m, \sigma}$, $F(K) \neq {x}_{m'+1}$ and $c(F(K), {x}_{m'+1}) = j$;
  if $\lex{\sigma}{{\sigma}_{m'}}$, then for each $K \in {T}_{m, \sigma}$, $F(K) \neq {x}_{m'+1}$ and $c(F(K), {x}_{m'+1}) = i$;
  \item
  for any $\sigma, \tau \in L({P}_{m})$, if $\lex{\sigma}{\tau}$, then $\pr{\pr{{B}_{m, \tau}}{{T}_{m, \tau}}}{\pr{{B}_{m, \sigma}}{{T}_{m, \sigma}}}$ is $\pr{i}{j}$-saturated w.r.t.\@ $c$;
  \item
  there exists $K \in {T}_{m, {\sigma}_{m}}$ so that ${x}_{m+1} = F(K)$;
  \item
  for each $n \in \omega$, $\left\langle {B}_{(m+1), \left( {\left( {\sigma}_{m} \right)}^{\frown}{\langle n \rangle} \right)}, {T}_{(m+1), \left( {\left( {\sigma}_{m} \right)}^{\frown}{\langle n \rangle} \right)} \right\rangle \leq \pr{{B}_{m, {\sigma}_{m}}}{{T}_{m, {\sigma}_{m}}}$, and furthermore for each $K \in {T}_{(m+1), \left( {\left( {\sigma}_{m} \right)}^{\frown}{\langle n \rangle} \right)}$, $F(K) \in {U}_{{x}_{m+1}, n}$. 
\end{enumerate}
Suppose for a moment that these two sequences can be built.
Define $Y = \{{x}_{m+1}: m \in \omega\}$.
Clearly $Y \subseteq X$, $Y$ is countable, and $Y$ is non-empty.
We first verify that $Y$ is dense in itself.
Indeed, fix $m, n \in \omega$.
We must find some $m' \in \omega$ for which ${x}_{m'+1} \in {U}_{{x}_{m+1}, n}$ and ${x}_{m'+1} \neq {x}_{m+1}$.
Put $\tau = {\left( {\sigma}_{m} \right)}^{\frown}{\langle n \rangle}$.
Then $\tau \in L({P}_{m+1})$.
Let $m' \in \omega$ be so that $\tau = {\sigma}_{m'}$.
It is easy to see that $m+1 \leq m'$.
By (9), for each $K \in {T}_{m+1, \tau}$, $F(K) \in {U}_{{x}_{m+1}, n}$.
By (6) applied to $m < m'$ and $\tau \in L({P}_{m'})$, since ${\sigma}_{m} \subsetneq \tau$, we have that for each $K \in {T}_{m', \tau}$, $F(K) \neq {x}_{m+1}$.
By (5) applied to $m+1 \leq m'$ and $\tau \in L({P}_{m+1}) \cap L({P}_{m'})$, we have that ${T}_{m', \tau} \subseteq {T}_{m+1, \tau}$.
Finally by (8) applied to $m'$, we have that there exists $K \in {T}_{m', \tau}$ so that ${x}_{m'+1} = F(K)$.
Thus ${x}_{m'+1} = F(K) \neq {x}_{m+1}$.
Also $K \in {T}_{m+1, \tau}$, whence ${x}_{m'+1} = F(K) \in {U}_{{x}_{m+1}, n}$, as needed.
This verifies that $Y$ is dense in itself.
We next check that $c''{\[Y\]}^{2} \subseteq \{i, j\}$.
Consider any $m' < m < \omega$.
We will verify that ${x}_{m'+1} \neq {x}_{m+1}$ and that $c({x}_{m'+1}, {x}_{m+1}) \in \{i, j\}$.
Apply (8) to find $K \in {T}_{m, {\sigma}_{m}}$ so that ${x}_{m+1} = F(K)$.
We see that ${\sigma}_{m'} \neq {\sigma}_{m}$, that ${\sigma}_{m'} \in N({P}_{m})$, and that ${\sigma}_{m} \in L({P}_{m})$.
In particular, we cannot have ${\sigma}_{m} \subseteq {\sigma}_{m'}$.
Hence by (6), we have the following three possibilities: if ${\sigma}_{m'} \subsetneq {\sigma}_{m}$, then ${x}_{m+1} \neq {x}_{m'+1}$ and $c({x}_{m+1}, {x}_{m'+1}) = i$; if $\lex{{\sigma}_{m'}}{{\sigma}_{m}}$, then ${x}_{m+1} \neq {x}_{m'+1}$ and $c({x}_{m+1}, {x}_{m'+1}) = j$; if $\lex{{\sigma}_{m}}{{\sigma}_{m'}}$, then ${x}_{m+1} \neq {x}_{m'+1}$ and $c({x}_{m+1}, {x}_{m'+1}) = i$.
This is as required.

To finish the proof, it suffices to construct sequences $\langle {x}_{m+1}: m \in \omega \rangle$ and $\seq{F}{m}{\in}{\omega}$ satisfying (4)--(9).
We do this by induction.
So fix ${m}^{\ast} \in \omega$ and assume that $\langle {x}_{m'+1}: m' < m'+1 < {m}^{\ast} \rangle$ and $\langle {F}_{m'}: m' < {m}^{\ast} \rangle$ have been defined.
We will define ${F}_{{m}^{\ast}}$ and if ${m}^{\ast} \neq 0$, then also ${x}_{{m}^{\ast}}$.
Since $L({P}_{0}) = {P}_{0} = \{\emptyset\}$, when ${m}^{\ast} = 0$, we only need to ensure that $\pr{{B}_{0, \emptyset}}{{T}_{0, \emptyset}}$ is defined and that it is below $\pr{{A}_{1}}{{S}_{1}}$.
So we define $\pr{{B}_{0, \emptyset}}{{T}_{0, \emptyset}} = \pr{{A}_{1}}{{S}_{1}}$.
Now suppose that ${m}^{\ast} = m+1$, for some $m \in \omega$.
Note that since ${\sigma}_{m} \in L({P}_{m})$, every $\sigma \in L({P}_{m}) \setminus \{{\sigma}_{m}\}$ is incomparable to ${\sigma}_{m}$.
Therefore $L({P}_{m}) \setminus \{{\sigma}_{m}\} = {\GG}_{0} \cup {\GG}_{1}$, where ${\GG}_{0} = \{\sigma \in L({P}_{m}): \lex{{\sigma}_{m}}{\sigma}\}$ and ${\GG}_{1} = \{\sigma \in L({P}_{m}): \lex{\sigma}{{\sigma}_{m}}\}$.
Applying Lemma \ref{lem:countablesaturation}, we conclude that
\begin{align*}
 \left\{ K' \in {T}_{m, {\sigma}_{m}}: \exists \sigma \in {\GG}_{0} \[K' \ \text{is not} \ j\text{-large in} \ \pr{{B}_{m, \sigma}}{{T}_{m, \sigma}} \ \text{w.r.t.\@} \ c \]\right\}
\end{align*}
is non-stationary in ${\[{B}_{m, {\sigma}_{m}}\]}^{< {\aleph}_{1}}$ and also that
\begin{align*}
 \left\{ K' \in {T}_{m, {\sigma}_{m}}: \exists \sigma \in {\GG}_{1}\[K' \ \text{is not} \ i\text{-large in} \ \pr{{B}_{m, \sigma}}{{T}_{m, \sigma}} \ \text{w.r.t.\@} \ c \]\right\}
\end{align*}
is non-stationary in ${\[{B}_{m, {\sigma}_{m}}\]}^{< {\aleph}_{1}}$.
Further, Lemma \ref{lem:mainlemma} tells us that
\begin{align*}
 \left\{ K' \in {T}_{m, {\sigma}_{m}}: F(K') \ \text{is not an} \ \pr{i}{j}\text{-winner in} \ \pr{{B}_{m, {\sigma}_{m}}}{{T}_{m, {\sigma}_{m}}} \right\}
\end{align*}
is non-stationary in ${\[{B}_{m, {\sigma}_{m}}\]}^{< {\aleph}_{1}}$.
Therefore we may choose $K' \in {T}_{m, {\sigma}_{m}}$ such that the following things are satisfied: $\forall \sigma \in {\GG}_{1}\[K' \ \text{is} \ i\text{-large in} \ \pr{{B}_{m, \sigma}}{{T}_{m, \sigma}} \ \text{w.r.t.\@} \ c \]$, $\forall \sigma \in {\GG}_{0} \[K' \ \text{is} \ j\text{-large in} \ \pr{{B}_{m, \sigma}}{{T}_{m, \sigma}} \ \text{w.r.t.\@} \ c \]$, and $F(K')$ is an $\pr{i}{j}$-winner in $\pr{{B}_{m, {\sigma}_{m}}}{{T}_{m, {\sigma}_{m}}}$.
Define ${x}_{m+1} = F(K') = F(K' \cap {A}_{0}) \in X$.
By Lemma \ref{lem:Bsequence}, there exists a sequence $\left\langle \left\langle {B}_{(m+1), \left( {\left( {\sigma}_{m} \right)}^{\frown}{\langle n \rangle} \right)}, {T}_{(m+1), \left( {\left( {\sigma}_{m} \right)}^{\frown}{\langle n \rangle} \right)} \right\rangle: n \in \omega \right\rangle$ such that the following hold for each $n \in \omega$:
 \begin{enumerate}[resume=main]
   \item
   $\left\langle {B}_{(m+1), \left( {\left( {\sigma}_{m} \right)}^{\frown}{\langle n \rangle} \right)}, {T}_{(m+1), \left( {\left( {\sigma}_{m} \right)}^{\frown}{\langle n \rangle} \right)} \right\rangle \leq \pr{{B}_{m, {\sigma}_{m}}}{{T}_{m, {\sigma}_{m}}}$;
   \item
   for each $K \in {T}_{(m+1), \left( {\left( {\sigma}_{m} \right)}^{\frown}{\langle n \rangle} \right)}$, $F(K) \in {U}_{{x}_{m+1}, n}$;
   \item
   for any $n < k < \omega$,
   \begin{align*}
    \pr{\pr{{B}_{(m+1), ({\left( {\sigma}_{m} \right)}^{\frown}{\langle k \rangle})}}{{T}_{(m+1), ({\left( {\sigma}_{m} \right)}^{\frown}{\langle k \rangle})}}}{\pr{{B}_{(m+1), ({\left( {\sigma}_{m} \right)}^{\frown}{\langle n \rangle})}}{{T}_{(m+1), ({\left( {\sigma}_{m} \right)}^{\frown}{\langle n \rangle})}}}
   \end{align*}
   is $\pr{i}{j}$-saturated w.r.t.\@ $c$; 
   \item
   for each $K \in {T}_{(m+1), \left( {\left( {\sigma}_{m} \right)}^{\frown}{\langle n \rangle} \right)}$, $F(K) \neq {x}_{m+1}$ and $c({x}_{m+1}, F(K)) = i$.
 \end{enumerate}
 For each $\sigma \in L({P}_{m}) \setminus \{{\sigma}_{m}\}$, if $\sigma \in {\GG}_{0}$, then define ${B}_{m+1, \sigma} = {B}_{m, \sigma}$ and
 \begin{align*}
  {T}_{m+1, \sigma} = \left\{ K \in {T}_{m, \sigma}: F(K') \neq F(K) \ \text{and} \ c(F(K'), F(K)) = j \right\},
 \end{align*}
 which is a stationary subset of ${\[{B}_{m, \sigma}\]}^{< {\aleph}_{1}}$.
 If $\sigma \in {\GG}_{1}$, then set ${B}_{m+1, \sigma} = {B}_{m, \sigma}$ and
 \begin{align*}
  {T}_{m+1, \sigma} = \left\{ K \in {T}_{m, \sigma}: F(K') \neq F(K) \ \text{and} \ c(F(K'), F(K)) = i \right\},
 \end{align*}
 which is a stationary subset of ${\[{B}_{m, \sigma}\]}^{< {\aleph}_{1}}$.
 Note that for all $\sigma \in L({P}_{m}) \setminus \{{\sigma}_{m}\}$, $\pr{{B}_{m+1, \sigma}}{{T}_{m+1, \sigma}} \leq \pr{{B}_{m, \sigma}}{{T}_{m, \sigma}}$.
 This finishes the definition of ${F}_{m+1}$ and ${x}_{m+1}$.
 It is simple to verify (4), (5), (8), and (9).
 We will go through the verification of (6) and (7).
 To check (7), fix any $\sigma, \tau \in L({P}_{m+1})$ and suppose that $\lex{\sigma}{\tau}$.
 If $\sigma, \tau \in L({P}_{m}) \setminus \{{\sigma}_{m}\}$, then the induction hypothesis applies and implies that $\pr{\pr{{B}_{m, \tau}}{{T}_{m, \tau}}}{\pr{{B}_{m, \sigma}}{{T}_{m, \sigma}}}$ is $\pr{i}{j}$-saturated w.r.t.\@ $c$.
 Since we have $\pr{{B}_{m+1, \tau}}{{T}_{m+1, \tau}} \leq \pr{{B}_{m, \tau}}{{T}_{m, \tau}}$ and $\pr{{B}_{m+1, \sigma}}{{T}_{m+1, \sigma}} \leq \pr{{B}_{m, \sigma}}{{T}_{m, \sigma}}$, it follows that $\pr{\pr{{B}_{m+1, \tau}}{{T}_{m+1, \tau}}}{\pr{{B}_{m+1, \sigma}}{{T}_{m+1, \sigma}}}$ is $\pr{i}{j}$-saturated w.r.t.\@ $c$.
 Next if $\sigma = {({\sigma}_{m})}^{\frown}{\langle n \rangle}$ and $\tau = {({\sigma}_{m})}^{\frown}{\langle k \rangle}$ for some $n, k \in \omega$, then $n < k$, and by (12), $\pr{\pr{{B}_{m+1, \tau}}{{T}_{m+1, \tau}}}{\pr{{B}_{m+1, \sigma}}{{T}_{m+1, \sigma}}}$ is $\pr{i}{j}$-saturated w.r.t.\@ $c$.
 Now suppose that $\sigma \in L({P}_{m}) \setminus \{{\sigma}_{m}\}$ and that $\tau = {({\sigma}_{m})}^{\frown}{\langle n \rangle}$, for some $n \in \omega$.
 Then $\lex{\sigma}{{\sigma}_{m}}$, and since $\sigma, {\sigma}_{m} \in L({P}_{m})$, the induction hypothesis applies and implies that $\pr{\pr{{B}_{m, {\sigma}_{m}}}{{T}_{m, {\sigma}_{m}}}}{\pr{{B}_{m, \sigma}}{{T}_{m, \sigma}}}$ is $\pr{i}{j}$-saturated w.r.t.\@ $c$.
 Since we know that $\pr{{B}_{m+1, \sigma}}{{T}_{m+1, \sigma}} \leq \pr{{B}_{m, \sigma}}{{T}_{m, \sigma}}$ and $\left\langle {B}_{m+1, \tau}, {T}_{m+1, \tau} \right\rangle \leq \pr{{B}_{m, {\sigma}_{m}}}{{T}_{m, {\sigma}_{m}}}$, we conclude that $\pr{\left\langle {B}_{m+1, \tau}, {T}_{m+1, \tau} \right\rangle}{\pr{{B}_{m+1, \sigma}}{{T}_{m+1, \sigma}}}$ is also $\pr{i}{j}$-saturated w.r.t.\@ $c$.
 In the case when $\sigma = {({\sigma}_{m})}^{\frown}{\langle n \rangle}$ for some $n \in \omega$ and $\tau \in L({P}_{m}) \setminus \{{\sigma}_{m}\}$, we have that $\lex{{\sigma}_{m}}{\tau}$.
 Since ${\sigma}_{m}, \tau \in L({P}_{m})$, the induction hypothesis tells us that $\pr{\pr{{B}_{m, \tau}}{{T}_{m, \tau}}}{\pr{{B}_{m, {\sigma}_{m}}}{{T}_{m, {\sigma}_{m}}}}$ is $\pr{i}{j}$-saturated w.r.t.\@ $c$.
 Since we know that $\pr{{B}_{m+1, \tau}}{{T}_{m+1, \tau}} \leq \pr{{B}_{m, \tau}}{{T}_{m, \tau}}$ and $\left\langle {B}_{m+1, \sigma}, {T}_{m+1, \sigma} \right\rangle \leq \pr{{B}_{m, {\sigma}_{m}}}{{T}_{m, {\sigma}_{m}}}$, we conclude that $\pr{\pr{{B}_{m+1, \tau}}{{T}_{m+1, \tau}}}{\left\langle {B}_{m+1, \sigma}, {T}_{m+1, \sigma} \right\rangle}$ is also $\pr{i}{j}$-saturated w.r.t.\@ $c$.
 This verifies (7).
 
 To verify (6), fix $m' \in \omega$ with $m' < m+1$ and fix $\sigma \in L({P}_{m+1})$.
 Suppose first that $\sigma \in L({P}_{m}) \setminus \{{\sigma}_{m}\}$.
 If $m' < m$, then the induction hypothesis together with the fact that ${T}_{m+1, \sigma} \subseteq {T}_{m, \sigma}$ gives what is needed.
 Now suppose that $m' = m$.
 Then we cannot have ${\sigma}_{m} \subsetneq \sigma$.
 If $\lex{{\sigma}_{m}}{\sigma}$, then $\sigma \in {\GG}_{0}$ and by the definition of ${T}_{m+1, \sigma}$, for each $K \in {T}_{m+1, \sigma}$, ${x}_{m+1} \neq F(K) \ \text{and} \ c({x}_{m+1}, F(K)) = j$.
 Similarly if $\lex{\sigma}{{\sigma}_{m}}$, then $\sigma \in {\GG}_{1}$ and by the definition of ${T}_{m+1, \sigma}$, for each $K \in {T}_{m+1, \sigma}$, ${x}_{m+1} \neq F(K) \ \text{and} \ c({x}_{m+1}, F(K)) = i$.
 This finishes the case when $\sigma \in L({P}_{m}) \setminus \{{\sigma}_{m}\}$.
 Next suppose that $\sigma = {({\sigma}_{m})}^{\frown}{\langle n \rangle}$, for some $n \in \omega$.
 Observe that ${\sigma}_{m'} \in {P}_{m}$ and hence that ${\sigma}_{m'} \neq {({\sigma}_{m})}^{\frown}{\langle k \rangle}$ for any $k \in \omega$.
 Note also that ${\sigma}_{m} \in L({P}_{m})$.
 Furthermore, we know that $\pr{{B}_{m+1, \sigma}}{{T}_{m+1, \sigma}} \leq \pr{{B}_{m, {\sigma}_{m}}}{{T}_{m, {\sigma}_{m}}}$.
 Therefore for any $K \in {T}_{m+1, \sigma}$, $K \cap {B}_{m, {\sigma}_{m}} \in {T}_{m, {\sigma}_{m}}$ and $F(K) = F(K \cap {B}_{m, {\sigma}_{m}})$.
 Now suppose that ${\sigma}_{m'} \subsetneq \sigma$.
 Then ${\sigma}_{m'} \subseteq {\sigma}_{m}$.
 If ${\sigma}_{m'} = {\sigma}_{m}$, then $m = m'$ and by (13) we have that for each $K \in {T}_{m+1, \sigma}$, $F(K) \neq {x}_{m+1}$ and $c({x}_{m+1}, F(K)) = i$, as required.
 So assume that ${\sigma}_{m'} \subsetneq {\sigma}_{m}$.
 Then $m' < m$ and by the induction hypothesis for each $K \in {T}_{m, {\sigma}_{m}}$, $F(K) \neq {x}_{m'+1}$ and $c(F(K), {x}_{m'+1}) = i$.
 Therefore for each $K \in {T}_{m+1, \sigma}$, $F(K) = F(K \cap {B}_{m, {\sigma}_{m}}) \neq {x}_{m'+1}$ and $c(F(K), {x}_{m'+1}) = c(F(K \cap {B}_{m, {\sigma}_{m}}), {x}_{m'+1}) = i$.
 This finishes the case when ${\sigma}_{m'} \subsetneq \sigma$.
 Next assume that $\lex{{\sigma}_{m'}}{\sigma}$.
 Then $\lex{{\sigma}_{m'}}{{\sigma}_{m}}$ and $m' < m$.
 So by the induction hypothesis, for each $K \in {T}_{m, {\sigma}_{m}}$, $F(K) \neq {x}_{m'+1}$ and $c(F(K), {x}_{m'+1}) = j$.
 Therefore for any $K \in {T}_{m+1, \sigma}$, $F(K) = F(K \cap {B}_{m, {\sigma}_{m}}) \neq {x}_{m'+1}$ and $c(F(K), {x}_{m'+1}) = c(F(K \cap {B}_{m, {\sigma}_{m}}), {x}_{m'+1}) = j$.
 Finally assume that $\lex{\sigma}{{\sigma}_{m'}}$.
 Then $\lex{{\sigma}_{m}}{{\sigma}_{m'}}$ and $m' < m$.
 So by the induction hypothesis, for each $K \in {T}_{m, {\sigma}_{m}}$, $F(K) \neq {x}_{m'+1}$ and $c(F(K), {x}_{m'+1}) = i$.
 Therefore for any $K \in {T}_{m+1, \sigma}$, $F(K) = F(K \cap {B}_{m, {\sigma}_{m}}) \neq {x}_{m'+1}$ and $c(F(K), {x}_{m'+1}) = c(F(K \cap {B}_{m, {\sigma}_{m}}), {x}_{m'+1}) = i$.
 This concludes the verification of (6).
 
 Therefore sequences $\langle {x}_{m+1}: m \in \omega \rangle$ and $\seq{F}{m}{\in}{\omega}$ having the required properties can be constructed.
 This finishes the proof of the theorem.
\end{proof}
In the case when $\delta$ is an uncountable strongly compact cardinal, we need a reflection argument telling us that only topological spaces that are members of ${V}_{\delta}$ are relevant.
The argument below is similar to the proof that a stationary set reflects to some ordinal below a strongly compact cardinal. 
\begin{Lemma} \label{lem:reflection}
  Suppose $\delta > \omega$ is a strongly compact cardinal.
  Suppose $X$ is a topological space which is not left-separated and has a point countable base.
  Then there exists a subspace $Y \subseteq X$ with $\lc Y \rc < \delta$ which is not left-separated and has a point-countable base.
\end{Lemma}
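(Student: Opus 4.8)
The plan is to exploit the covering property of the strongly compact cardinal $\delta$. Using that $\delta$ is strongly compact, fix an elementary embedding $j\colon V\to M$ with $M$ transitive, $\mathrm{crit}(j)=\delta$, together with a set $Y^{*}\in M$ such that $j''X\subseteq Y^{*}\subseteq j(X)$ and $M\models\lc Y^{*}\rc<j(\delta)$; e.g.\@ one may take $Y^{*}=[\mathrm{id}]_{\U}$ for a fine $\delta$-complete ultrafilter $\U$ on $[X]^{<\delta}$. By elementarity, $M$ believes that $\pr{j(X)}{j(\TT)}$ is a regular space with point-countable base $j(\BB)$ which is not left-separated, and inside $M$ the set $Y^{*}$, with the subspace topology inherited from $j(X)$, is a regular space of size $<j(\delta)$ with a point-countable base. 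It therefore suffices to show that $M\models$ ``$Y^{*}$ is not left-separated'': granting this, $M$ satisfies ``there is $Z\subseteq j(X)$ with $\lc Z\rc<j(\delta)$ that is not left-separated'', and elementarity of $j$ produces a $Y\subseteq X$ with $\lc Y\rc<\delta$ that is not left-separated (and automatically regular with a point-countable base, being a subspace of $X$).

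The only topological input is that $j\restrict X$ is a continuous bijection from $\pr{X}{\TT}$ onto $j''X$, equipped with the subspace topology from $j(X)$, and this is where the point-countability of $\BB$ is used. Injectivity is clear. For continuity, let $a\in X$ and let $V$ be open in $j(X)$ with $j(a)\in V$; by elementarity there is a basic $W\in j(\BB)$ with $j(a)\in W\subseteq V$. Since ${\BB}_{\{a\}}$ is countable, $j({\BB}_{\{a\}})=j''{\BB}_{\{a\}}$, and as ${\BB}_{\{a\}}$ is definable from $\BB$ and $a$ we get $j(\BB)_{\{j(a)\}}=j({\BB}_{\{a\}})=\{j(U):U\in{\BB}_{\{a\}}\}$; hence $W=j(U)$ for some $U\in\TT$ with $a\in U$, and $j''U\subseteq j(U)\subseteq V$. (The map is also open, since $j''U=j(U)\cap j''X$ for $U\in\TT$, but only continuity is needed below.)

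Now I carry out the main step. Suppose toward a contradiction that $M\models$ ``$Y^{*}$ is left-separated'', and fix in $M$ a well-ordering $\wo$ of $Y^{*}$ all of whose $\wo$-initial segments are closed in $Y^{*}$. Since $\wo\in M\subseteq V$ and $j\restrict X\in V$, the rest of the argument runs in $V$. Put $\wo_{0}=\wo\restrict j''X$, a well-ordering of $j''X$. Because $j''X\subseteq Y^{*}$, each initial segment $\{y\in j''X: y\wo j(b)\}=\{y\in Y^{*}: y\wo j(b)\}\cap j''X$ is closed in the subspace $j''X$. Transporting along the continuous bijection $j\restrict X$, the relation on $X$ given by $a\wo' b\iff j(a)\wo j(b)$ is a well-ordering of $X$, and each of its initial segments $\{a\in X: a\wo' b\}=(j\restrict X)^{-1}\bigl(\{y\in j''X: y\wo_{0}j(b)\}\bigr)$ is closed in $X$, being the preimage of a closed set under the continuous map $j\restrict X$. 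Thus $\wo'$ witnesses that $X$ is left-separated, contradicting the hypothesis. Hence $M\models$ ``$Y^{*}$ is not left-separated'', as required.

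The main obstacle, as I see it, is conceptual rather than technical: one should reflect through the well-ordering characterization of left-separation (Definition \ref{def:leftseparated}) and not through the stationarity characterization of Theorem \ref{thm:fleissner}. Attempting the latter directly stalls, because a stationary subset of $[j(X)]^{<\aleph_{1}}$ need not restrict to a stationary subset of $[Y^{*}]^{<\aleph_{1}}$, and the covering set $Y^{*}$ is in general too small to carry the stationarity of $j\bigl(\{N: \overline{N}\setminus N\neq\emptyset\}\bigr)$. With the well-ordering approach the covering set $Y^{*}$ suffices as is, and point-countability of the base is exactly what makes $j\restrict X$ a homeomorphic embedding onto $j''X$ — the step playing a role analogous to that of Lemma \ref{lem:elementary1}.
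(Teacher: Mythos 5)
Your proposal is correct and follows essentially the same route as the paper: a strong-compactness embedding $j$ with a covering set of $M$-cardinality $<j(\delta)$ containing $j''X$, and a pullback along $j$ of a putative left-separating well-ordering of that covering set to a left-separating well-ordering of $X$, with point-countability of $\BB$ together with $\mathrm{crit}(j)=\delta>\omega$ supplying the topological transfer. The differences are only organizational: you verify the key step by proving $j\restrict X$ is continuous onto $j''X$ via $j({\BB}_{\{a\}})=j''{\BB}_{\{a\}}$, whereas the paper uses first countability and preservation of convergent $\omega$-sequences under $j$, and you conclude by reflecting ``there is a non-left-separated subspace of size $<j(\delta)$'' from $M$ to $V$, whereas the paper equivalently contradicts the assumption that every subspace of $X$ of size $<\delta$ is left-separated.
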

\begin{proof}
  Let $\TT$ be the topology on $X$ with a point-countable base $\BB \subseteq \TT$.
  Suppose that every subspace of $X$ with size less than $\delta$ is left-separated.
  By the fact that $\delta$ is strongly compact, we can find an elementary embedding $j: V \rightarrow M$ and a set $Y \in M$ such that the critical point of $j$ is $\delta$, $j''X \subseteq Y \subseteq j(X)$, and ${\lc Y \rc}^{M} < j(\delta)$.
  By our hypothesis, working in $M$, we find that $Y$ can be left-separated.
  So the following statement holds in $M$: there is an ordinal $\alpha$ and a bijection $f: \alpha \rightarrow Y$ such that for each $\xi < \alpha$, $\{f(\zeta): \zeta < \xi\}$ is closed relative to $Y$.
  Now in $V$ define a well-ordering of $X$ as follows.
  For any $x, x' \in X$, $x' \prec x$ if and only if ${f}^{-1}(j(x')) < {f}^{-1}(j(x))$.
  The order $\prec$ is clearly a well-ordering of $X$.
  Now fix $x \in X$.
  We must check that $I = \{x' \in X: x' \prec x\}$ is closed in $X$.
  Suppose $x'' \in X$ belongs to the closure of $I$.
  Since $X$ has a point-countable base, we can find a sequence $\langle {x}_{n}': n \in \omega \rangle$ converging to $x''$ such that $\forall n \in \omega\[{x}_{n}' \in I\]$.
  By elementarity, working in $M$, we have that $j(\langle {x}_{n}': n \in \omega \rangle)$ converges to $j(x'')$ in $\pr{j(X)}{j(\TT)}$, which is a topological space according to $M$.
  Note that $j(\langle {x}_{n}': n \in \omega \rangle) = \langle j({x}_{n}'): n \in \omega \rangle$.
  Put $\xi = {f}^{-1}(j(x))$.
  Then $\{j({x}_{n}'): n \in \omega\} \subseteq \{f(\zeta): \zeta < \xi\}$.
  We know that $\{f(\zeta): \zeta < \xi\}$ is a closed subset of $Y$ according to $M$.
  Therefore $j(x'') \in \{f(\zeta): \zeta < \xi\}$, whence $x'' \prec x$, and so $x'' \in I$.
  This shows that $I$ is closed in $X$.
  Thus $\prec$ witnesses that $X$ can be left-separated, contradicting our hypothesis on $X$.
  Therefore there must exist a subspace $Y \subseteq X$ with $\lc Y \rc < \delta$ which cannot be left-separated.
  It is easy to see that $\BB' = \{U \cap Y: U \in \BB\}$ is a point-countable base for the subspace topology on $Y$.
  Hence $Y$ is as required.
\end{proof}
Now we can state the following corollaries to Theorem \ref{thm:main} and Lemma \ref{lem:reflection}.
Corollary \ref{cor:reals} establishes Theorem \ref{thm:metricspaces} for all uncountable sets of reals.
Note that a single Woodin cardinal suffices for this special case of Theorem \ref{thm:metricspaces} as every set of reals is a member of ${V}_{\delta}$ when $\delta$ is the least Woodin cardinal.
Corollary \ref{cor:woodin} establishes Theorem \ref{thm:general} which, as we pointed out in Section \ref{intro2}, implies Theorem \ref{thm:metricspaces}.
\begin{Cor} \label{cor:reals}
  Suppose there exists at least one Woodin cardinal or one uncountable strongly compact cardinal.
  Then for every uncountable set of reals $X$, every $l \in \omega$, and every coloring $c: {\[X\]}^{2} \rightarrow l$, there exists $Y \subseteq X$ such that $Y$ is homeomorphic to the rationals and $c$ realizes at most two colors on $Y$. 
\end{Cor}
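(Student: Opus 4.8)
The plan is to obtain Corollary \ref{cor:reals} as a direct consequence of Theorem \ref{thm:main}, supplemented by Lemma \ref{lem:reflection} in the strongly compact case and by two classical facts: first, that every countable, metrizable, dense-in-itself space is homeomorphic to $\Q$ (Sierpi{\' n}ski's topological characterization of the rationals); second, that an uncountable subspace of $\R$ is regular, possesses a point-countable base, and is not left-separated. For the last assertion, recall that a metrizable space is left-separated precisely when it is $\sigma$-discrete, and that a $\sigma$-discrete subspace of the hereditarily separable space $\R$ is a countable union of countable discrete sets, hence countable; since $X$ is uncountable it cannot be $\sigma$-discrete, so it is not left-separated. (Alternatively, one may invoke Theorem \ref{thm:fleissner} directly.) Being second countable, $X$ also carries a countable, hence point-countable, base, and being metrizable it is regular. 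We may of course assume $l \geq 1$, the case $l = 0$ being vacuous for uncountable $X$.

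The argument then divides according to which large cardinal hypothesis is in force. Suppose first there is a Woodin cardinal $\delta$. As $\delta$ is strongly inaccessible, $\R$ and all of its subsets have rank below $\delta$, so $X \in {V}_{\delta}$. Consequently $\pr{X}{\TT, \BB}$, with $\TT$ the subspace topology and $\BB$ a countable base, meets all the standing hypotheses fixed at the start of Section \ref{sec:main}. Applying Theorem \ref{thm:main} to this $X$ and to the given coloring $c$ produces colors $i, j \in l$ and a non-empty countable $Y \subseteq X$ which is dense in itself and satisfies $c''{[Y]}^{2} \subseteq \{i, j\}$. Since $Y \subseteq X \subseteq \R$ is a countable, metrizable, dense-in-itself space, it is homeomorphic to $\Q$, and $c$ realizes at most two colors on it, which is the desired conclusion.

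Now suppose instead there is an uncountable strongly compact cardinal $\delta$. Here the coloring $c$ need not belong to ${V}_{\delta}$, so I would first apply Lemma \ref{lem:reflection} to the space $X$ to obtain a subspace $X' \subseteq X$ with $\lc X' \rc < \delta$ which is still not left-separated and still carries a point-countable base. Since $\delta$ is inaccessible and $X'$ is a set of reals of size $< \delta$, we have $X' \in {V}_{\delta}$; moreover $X'$ is regular, being a metrizable subspace of $X$. Thus $\pr{X'}{\TT', \BB'}$ satisfies the standing hypotheses of Section \ref{sec:main}, and Theorem \ref{thm:main}, applied to $X'$ and to $c \restrict {[X']}^{2}$, yields colors $i, j \in l$ and a non-empty countable dense-in-itself $Y \subseteq X' \subseteq X$ with $c''{[Y]}^{2} \subseteq \{i, j\}$. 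Again $Y$ is homeomorphic to $\Q$ and $c$ uses at most two colors on $Y$.

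The deduction is essentially bookkeeping: all of the substance sits in Theorem \ref{thm:main}. The only steps deserving a remark are the non-left-separatedness of uncountable sets of reals and, in the strongly compact case, the reflection to a subspace lying in ${V}_{\delta}$ — and the latter is exactly the content of Lemma \ref{lem:reflection}. Accordingly I do not expect a genuine obstacle in carrying out this plan; if anything merits the most care, it is confirming that the reflected subspace $X'$ both lands in ${V}_{\delta}$ and retains the two crucial topological properties, which Lemma \ref{lem:reflection} delivers.
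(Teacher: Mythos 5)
Your proposal is correct and follows essentially the same route as the paper: Corollary \ref{cor:reals} is read off from Theorem \ref{thm:main} after observing that an uncountable set of reals is regular, has a (point-)countable base, is not left-separated (not being $\sigma$-discrete), lies in ${V}_{\delta}$ since $\delta$ is inaccessible, and that the resulting countable dense-in-itself $Y$ is homeomorphic to $\Q$ by Sierpi\'nski's characterization. Your detour through Lemma \ref{lem:reflection} in the strongly compact case is harmless but unnecessary: a strongly compact cardinal is inaccessible, so $X$ (and indeed $c$) already lies in ${V}_{\delta}$, and in any case the hypotheses of Section \ref{sec:main} only require the space, not the coloring, to be in ${V}_{\delta}$.
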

We would like to note that it is easy to modify the proof of Theorem \ref{thm:main} to show that the conclusion of Corollary \ref{cor:reals} also holds if there is a precipitous ideal on ${\omega}_{1}$.
It is not known at present whether any large cardinal hypothesis proves the existence of a precipitous ideal on ${\omega}_{1}$.
However the existence of a precipitous ideal on ${\omega}_{1}$ is equal in consistency strength to the existence of one measurable cardinal (see \cite{Je}), which is considerably lower in consistency strength than the existence of one Woodin cardinal.
Hence a measurable cardinal puts an upper bound on the consistency strength of the statement that the $2$-dimensional Ramsey degree of $\Q$ within the class of all uncountable sets of real numbers is $2$.
\begin{Cor} \label{cor:woodin}
  Suppose there exists a proper class of Woodin cardinals or one uncountable strongly compact cardinal.
  Then for every regular topological space $\pr{X}{\TT}$ which is not left-separated and has a point-countable base, every $l \in \omega$, and every coloring $c: {\[X\]}^{2} \rightarrow l$, there exists $Y \subseteq X$ such that $Y$ is homeomorphic to the rationals and $c$ realizes at most two colors on $Y$. 
\end{Cor}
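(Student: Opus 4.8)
The plan is to derive Corollary~\ref{cor:woodin} directly from Theorem~\ref{thm:main}, using Lemma~\ref{lem:reflection} to deal with the strongly compact case. The only ingredient needed beyond those two results is the classical fact that every countable, regular, dense-in-itself space with a point-countable base is homeomorphic to $\Q$, and I would dispatch this first: if $Y$ is such a space with point-countable base $\BB$, then, $Y$ being countable, $\{U \cap Y : U \in \BB\}$ is a countable base for $Y$, so $Y$ is second countable, hence metrizable by Urysohn's metrization theorem, and a countable metrizable space without isolated points is homeomorphic to $\Q$.

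Suppose first that there is a proper class of Woodin cardinals, and fix a regular, non-left-separated space $\pr{X}{\TT}$ with a point-countable base $\BB$, an $l \in \omega$, and a coloring $c : {\[X\]}^{2} \rightarrow l$. Choose a Woodin cardinal $\delta$ with $\pr{X}{\TT, \BB}, c \in {V}_{\delta}$; this is possible since there are Woodin cardinals of arbitrarily large rank. Then $\delta$ and $\pr{X}{\TT, \BB}$ satisfy exactly the standing hypotheses fixed at the beginning of Section~\ref{sec:main}, so that Theorem~\ref{thm:main}, applied with this $\delta$, $X$, $\BB$, $l$, and $c$, produces a non-empty countable $Y \subseteq X$ which is dense in itself and satisfies $c''{\[Y\]}^{2} \subseteq \{i, j\}$ for the pair $\pr{i}{j}$ of Lemma~\ref{lem:existssaturated}. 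Being a subspace of a regular space with a point-countable base, $Y$ retains both properties, so $Y$ is homeomorphic to $\Q$ by the first paragraph, and $c$ takes at most the two values $i, j$ on $Y$. This settles this case.

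Now suppose instead that $\delta$ is an uncountable strongly compact cardinal; the difficulty is that $X$ need not lie in ${V}_{\delta}$. Given $\pr{X}{\TT}$, $\BB$, $l$, and $c$ as above, I would apply Lemma~\ref{lem:reflection} to obtain a subspace $X' \subseteq X$ with $\lc X' \rc < \delta$ that is not left-separated and has a point-countable base; as a subspace of $X$ it is also regular. Since $\delta$ is strongly inaccessible, one may re-index $X'$ by an ordinal below $\delta$ via a bijection and push forward the topology, the point-countable base, and the coloring $c$ restricted to ${\[X'\]}^{2}$ to obtain an isomorphic structure lying in ${V}_{\delta}$. Applying Theorem~\ref{thm:main} to this copy and carrying the resulting set back through the re-indexing bijection yields a non-empty countable, dense-in-itself $Y \subseteq X' \subseteq X$ on which $c$ assumes at most two values; again $Y$ is a countable, regular, dense-in-itself space with a point-countable base, hence homeomorphic to $\Q$, which completes the proof.

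The step I expect to require the most care is the reflection-and-transport argument in the strongly compact case: one must verify that Lemma~\ref{lem:reflection} applies verbatim, that relabelling $X'$ into ${V}_{\delta}$ preserves precisely the properties invoked by Theorem~\ref{thm:main}, namely regularity, non-left-separatedness, and point-countability of the base, all of which are homeomorphism invariants, and finally that a homeomorphic copy of $\Q$ inside the relabelled space which is monochromatic in at most two colors transports back to a genuine subspace of $X$ with the same property. The remainder is a routine unwinding of the conclusion of Theorem~\ref{thm:main}.
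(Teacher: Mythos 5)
Your proposal is correct and follows essentially the same route as the paper: in the Woodin case one picks $\delta$ with $\pr{X}{\TT, \BB} \in {V}_{\delta}$ and applies Theorem~\ref{thm:main} directly, and in the strongly compact case one invokes Lemma~\ref{lem:reflection} to pass to a subspace of size $< \delta$ before applying Theorem~\ref{thm:main} (the paper phrases this as a proof by contradiction, yours is direct, but that is immaterial). You also spell out two details the paper leaves implicit --- the relabelling of the reflected subspace into ${V}_{\delta}$ and the classical fact that a countable, regular, dense-in-itself space with a countable (here point-countable) base is homeomorphic to $\Q$ --- both of which you handle correctly.
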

\begin{proof}
  If there is a proper class of Woodin cardinals, then for every topological space $\pr{X}{\TT}$, there is a Woodin cardinal $\delta$ so that $\pr{X}{\TT} \in {V}_{\delta}$.
  Hence the conclusion of the corollary immediately follows from Theorem \ref{thm:main}.
  
  Next, suppose that $\delta > \omega$ is a strongly compact cardinal. 
  Suppose for a contradiction that there is a counterexample to the conclusion of the corollary.
  Then by Lemma \ref{lem:reflection}, and by the fact that a subspace of a regular space is regular, we can find a counterexample $\pr{X}{\TT} \in {V}_{\delta}$, together with $l \in \omega$ and a coloring $c: {\[X\]}^{2} \rightarrow l$.
  However this contradicts Theorem \ref{thm:main}.  
\end{proof}
\def\polhk#1{\setbox0=\hbox{#1}{\ooalign{\hidewidth
  \lower1.5ex\hbox{`}\hidewidth\crcr\unhbox0}}}
\providecommand{\bysame}{\leavevmode\hbox to3em{\hrulefill}\thinspace}
\providecommand{\MR}{\relax\ifhmode\unskip\space\fi MR }
\providecommand{\MRhref}[2]{%
  \href{http://www.ams.org/mathscinet-getitem?mr=#1}{#2}
}
\providecommand{\href}[2]{#2}

\end{document}